\newtheorem{theorem}{Theorem}
[section]
\newtheorem*{theorem*}{Theorem}
\newtheorem{corollary}[theorem]{Corollary}
\newtheorem{lemma}[theorem]{Lemma}
\newtheorem{proposition}[theorem]{Proposition}
\theoremstyle{remark}
\newtheorem{definition}[theorem]{Definition}
\newtheorem{remark}[theorem]{Remark}
\newtheorem{question}[theorem]{Question}
\def\NN{\mathbb{N}}
\def\QQ{\mathbb{Q}}
\def\ZZ{\mathbb{Z}}
\definecolor{codegreen}{rgb}{0,0.6,0}
\definecolor{codegray}{rgb}{0.5,0.5,0.5}
\definecolor{codepurple}{rgb}{0.58,0,0.82}
\definecolor{backcolour}{rgb}{0.95,0.95,0.92}
\lstdefinestyle{sagestyle}{
    backgroundcolor=\color{backcolour},   
    commentstyle=\color{codegreen},
    keywordstyle=\color{magenta},
    numberstyle=\tiny\color{codegray},
    stringstyle=\color{codepurple},
    basicstyle=\ttfamily\footnotesize,
    breakatwhitespace=false,         
    breaklines=true,                 
    captionpos=b,                    
    keepspaces=true,                 
    numbers=none,                    
    numbersep=5pt,                  
    showspaces=false,                
    showstringspaces=false,
    showtabs=false,                  
    tabsize=2
}
\newcommand{\sn}{\ell_n+u_n}
\newcommand{\tn}{\ell_n+u_n}
\title{If a machine did it, it is probably transcendental (even $p$-adically)}
\author{Laura Capuano}
\address[Laura Capuano]{Dipartimento di Matematica e Fisica, Univ.\ degli Studi Roma Tre}
\email{laura.capuano@uniroma3.it}
\author{Sara Checcoli}
\address[Sara Checcoli]{Univ.\ Grenoble Alpes, CNRS, IF, 38000 Grenoble, France}
\email{sara.checcoli@univ-grenoble-alpes.fr}
\author{Marzio Mula}
\address[Marzio Mula]{Research Institute CODE, Univ.\ of the Bundeswehr Munich}
\email{marzio.mula@unibw.de}
\author{Lea Terracini}
\address[Lea Terracini]{Dipartimento di Informatica, Univ.\ di Torino}
\email{lea.terracini@unito.it}
\date{\today}
\keywords{$p$-adic numbers, continued fractions, transcendence, words combinatorics.}
\subjclass{11D88, 11J70, 11J81, 68R15}
\begin{document}

\begin{abstract}
Continued fraction expansions provide a well-established bridge between algebraic properties of numbers and combinatorics on words. In this article, we investigate the algebraicity of $p$-adic numbers whose continued fractions arise from certain classes of words which generalize the classical automatic, periodic and palindromic words.
Our main result shows that, under mild conditions on the $p$-adic continued fraction expansion, such numbers are either algebraic of degree at most 2 or transcendental.
This result provides an analogue of results of Bugeaud and Adamczewski-Bugeaud in the real setting and extends previous works that were limited to specific choices of $p$-adic floor functions and less general classes of words.
\end{abstract}
\maketitle
\section{Introduction}
Continued fractions can be viewed as a way to map numbers in a given set to words, i.e.\ possibly infinite sequences of elements from a specified alphabet $\mathcal{A}$. 

There are several ways to construct continued fraction algorithms depending on the context. In this article, we will focus on the case where the numbers belong to a field $K$, which is either $\mathbb{R}$ or $\mathbb{Q}_p$, and where the alphabet $\mathcal{A}$ is a subset of either $\mathbb{Z}$ or $\mathbb{Z}[1/p]$. We denote by $\NN$ the set of strictly positive integers.

Most classical constructions are based on the choice of a function, called \emph{floor function}, $s: K \to K$, which determines the choice of the alphabet $\mathcal{A}=\mathrm{Im}(s)$. 

In the archimedean case, we have $K = \mathbb{R}$, $\mathcal{A}\subset \mathbb{Z}$ and $s$ is the classical floor function. However, in the $p$-adic setting where $K=\mathbb{Q}_p$ and $\mathcal{A}\subset \mathbb{Z}[1/p]$, there are infinitely many possible choices for the floor function $s$ and they all share analogous properties to the classical one (see Section \ref{subsec-p-adic floor-function}). 

In all cases, given a floor function $s$ and $\alpha \in K$, the corresponding word $a_0 a_1 a_2 \cdots$ is obtained via the algorithm:
\begin{equation}\label{CF-algo-floor-fun-def}
\gamma_0 = \alpha, \quad a_n = s(\gamma_n), \quad \gamma_{n+1} = \frac{1}{\gamma_n - a_n} \quad \text{if } \gamma_n \neq a_n,
\end{equation}
which stops when $\gamma_n = a_n$.  The $a_n$'s are called the \emph{partial quotients} of $\alpha$. From these, one can define the two important sequences of \emph{continuants} $ (A_n)_{n\geq-1}$ and $ (B_n)_{n\geq -1}$, given by the following recurrences :
\begin{equation}
    \label{def-continuants}
\begin{aligned}
A_{-1}&= 1, \quad A_0= a_0, \quad A_n= a_n A_{n-1} + A_{n-2}, \\
B_{-1}&= 0, \quad B_0= 1,  \quad B_n= a_n B_{n-1} + B_{n-2}.
\end{aligned}
\end{equation}
The quotient $A_n/B_n$ is called the  \emph{$n$-th convergent} of the continued fraction expansion (with respect to $s$) of $\alpha$. Indeed, it is a classical result that the sequence $(A_n/B_n)_n$ converges $p$-adically to $\alpha$~\cite[Theorem 2]{Browkin1978}.

If a number $\alpha$ corresponds, via a continued fraction algorithm with floor function $s$, to the word $a_0 a_1 a_2 \cdots$, we write \[\alpha = [a_0, a_1, a_2, \ldots]_s.\] When $K = \mathbb{R}$ and $s$ is the classical floor function, we simply write $\alpha = [a_0, a_1, a_2, \ldots]$.

Continued fractions share several features with the more common base-$b$ representations of numbers~\cite{Berthé_Rigo_2010}: they can be computed algorithmically, they are well-suited for (some) arithmetic operations, and, surprisingly, they carry over some properties from the realm of word-combinatoric to the realm of numbers. In this article we will investigate the latter feature in the case of $p$-adic continued fractions.
\par Before doing so, let us briefly consider the classical archimedean case, from which our approach is inspired. 
It is well known~\cite[Ch.\,4]{angell2021irrationality} that archimedean continued fractions provide a bijection between real numbers in $[0,1)$ and words on $\mathbb{N}$ (excluding finite words ending with~$1$). Not only does this bijection restrict to a bijection between rationals and finite words, but, as already shown by Lagrange, it also restricts to a bijection between algebraic numbers of degree $2$ and eventually periodic words. This motivated the following:
\begin{question}\label{question-intro}
    Are there families of  \emph{special} words that correspond to families of algebraic numbers of degree $>2$?
\end{question}
 Notably, the answer to this question is expected to be negative. The reasoning behind this expectation can be seen as a sort of statement in \emph{unlikely intersections}: any set of words exhibiting some form of structure constitutes a small subset within the set of all words. Consequently, words from such a set arise from the continued fraction expansion of a  small subset of the real numbers. Such a small set is unlikely to intersect the set of algebraic numbers, which is itself small, unless the words we took were \emph{very special} i.e.\ finite or periodic.
 
Historically, two families of special words have mainly been considered as natural generalizations of periodic words: words on finite subsets of $\mathbb{N}$ and words containing increasingly large repeated or mirrored prefixes. Although these are the primary candidates for being new special families with algebraic continued fractions, all available partial results suggest they fall short of this goal, revealing instead a substantial gap between periodic words and any form of generalization.

Indeed, the former family, i.e.\ words on a finite alphabet, was already studied by Borel, who proved that it is in bijection with a measure-zero set of real numbers.
As mentioned in~\cite[§4]{shallit1992real}, it seems that Khintchine was the first to ask whether this set contains any algebraic number of degree~$>2$. While this question remains open in general, in the meantime several specific cases have been studied and suggest that the answer should be negative. Such known partial results include families of quasi-periodic words~\cites[Ch.\,VII]{maillet1906introduction}{baker1962:cfTransc}, and some words of linear complexity i.e.\ words containing only $O(n)$ distinct subwords of length $n$, such as Thue-Morse words~\cites{queffelec1998:thueMorse}{adamBug2007:thueMorse} or Sturmian words   ~\cites{queffelec2000irrational}{ALLOUCHE200139}.

In a series of later works ~\cites{adamBug2005:complIIContFrac}{AdamczewskiBugeaudDavison06}{AdamczewskiBugeaud2007:mailletBaker}{AdamczewskiBugeaud2007:palindromic}{Bugeaud2010AutomaticCF} the authors considered generalisations of quasi-periodic and quasi-palidromic words, namely the \emph{$\spadesuit$ words} and \emph{$\clubsuit$ words}. 

Before giving their definitions, we fix some notation: for a finite word $\omega$, we denote its length by $|\omega|$ and we denote by $\widetilde{\omega}$ the word obtained by reversing the order of the letters of $\omega$.
According to \cite[beginning of Sections 3 and 5]{Bugeaud2010AutomaticCF}), we have the following:
\begin{definition}\label{def-spade-club-with-constant}
Let $c>0$ be a real number. A word $\omega$ has \emph{property $\spadesuit$} (respectively \emph{property $\clubsuit$}) \emph{with constant $c$} if it is not ultimately periodic and if there exists three sequences $(U_n)_n, (V_n)_n, (W_n)_n$ of finite words such that:
\begin{enumerate}[label=(\roman*)]
\item\label{it1} for every $n\geq 1$, $W_n U_n V_n U_n$ (respectively, $W_n U_n V_n \widetilde{U_n}$)  is a prefix of $\omega$ ;
    \item\label{it2} for every $n\geq 1$, $\max(|V_n|/|U_n|, |W_n|/|U_n|)\leq c$;
        \item\label{it3} the sequence $(|U_n|)_n$ is increasing.
\end{enumerate}    
\end{definition}  Notably,  $\spadesuit$ words include all linear complexity words~\cite[Theorem.\,1.1]{Bugeaud2010AutomaticCF}. In particular, thanks to a theorem by Cobham~\cite{cobham68:tagMachinesComplexity}, they include all automatic words, which are words on a finite alphabet that can be generated by a finite state machine. However, both classes of $\spadesuit$ words and $\clubsuit$ words include also words on infinite alphabets  (see Section \ref{section-2-picche-fiori-esempi} for more details and examples). 

In 
\cite{adamBug2005:complIIContFrac, AdamczewskiBugeaudDavison06, AdamczewskiBugeaud2007:palindromic} the authors showed that, if $\omega=a_1a_2\cdots$ has property $\spadesuit$ or $\clubsuit$, and assuming the extra condition that the limit superior of the sequence $({|W_n|}/{|U_n|})_n$ is \emph{small enough}, then the real number $[0, a_1, a_2, \dots]$ is transcendental. This was generalised by Bugeaud \cite[Theorem 1.3] {Bugeaud2010AutomaticCF} to all $\spadesuit$ and $\clubsuit$ words as it follows:
\begin{theorem}[Bugeaud]\label{thm-bugeaud}
\label{thm:Bugeaud}
    Let $\alpha=[0,a_1,a_2,\dots]$, with $a_i\in \mathbb{Z}_{> 0}$, and let $A_n/B_n$ be the $n$-th convergent of its continued fraction expansion. 
    Suppose that the word $a_1 a_2 a_3 \cdots$ satisfies condition $\spadesuit$ or $\clubsuit$, and that there exist a real $C>1$ such that $|B_n|\leq C^n$.
    Then, $\alpha$ is either quadratic or transcendental.
\end{theorem}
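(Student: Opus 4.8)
The plan is to follow the strategy of Adamczewski--Bugeaud and Bugeaud and to reduce the statement to the Subspace Theorem. One begins by disposing of the low degrees: a word with property $\spadesuit$ or $\clubsuit$ is infinite and, by Definition~\ref{def-spade-club-with-constant}, not ultimately periodic, so $\alpha$ is irrational and, by Lagrange's theorem, not quadratic; hence it suffices to assume that $\alpha$ is algebraic of degree $d\ge 3$ and to derive a contradiction. Fix a large index $n$ and, in the $\spadesuit$ case, set $i_n=|W_n|$ and $j_n=|W_nU_nV_n|$; then condition~\ref{it1} says that the complete quotients $\gamma_{i_n+1}$ and $\gamma_{j_n+1}$ of $\alpha$ begin with the same $|U_n|$ partial quotients.

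The key step is to convert this coincidence into an extremely good quadratic approximation of $\alpha$. By the classical continuant estimates, two continued fractions that agree on their first $|U_n|$ partial quotients differ by at most a constant times $K(U_n)^{-2}$, where $K(\cdot)$ denotes the continuant of a finite word; thus $|\gamma_{i_n+1}-\gamma_{j_n+1}|\le 2K(U_n)^{-2}$. Inserting the identities $\gamma_{m+1}=(A_{m-1}-B_{m-1}\alpha)/(B_m\alpha-A_m)$ and clearing denominators, this reads $|x_n\alpha^2+y_n\alpha+z_n|\le 2K(U_n)^{-2}\,|B_{i_n}\alpha-A_{i_n}|\,|B_{j_n}\alpha-A_{j_n}|$ for integers $x_n,y_n,z_n$, not all zero (this uses $\gcd(A_k,B_k)=1$ and the strict growth of $(B_k)$), formed from $A_{i_n},B_{i_n},A_{j_n},B_{j_n}$ and their predecessors, with $\max(|x_n|,|y_n|,|z_n|)\le 2B_{i_n}B_{j_n}$; equivalently, $\alpha$ is very close to the quadratic irrational $\beta_n=[0,W_n,\overline{U_nV_n}]$, whose expansion is eventually periodic. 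For a $\clubsuit$ word one runs the same computation with $\widetilde{U_n}$ in place of the repeated $U_n$, using that reversing a word transposes the associated continuant matrix, and again obtains a quadratic relation of the same shape.

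One then applies Schmidt's Subspace Theorem over $\QQ$: feed in the nonzero integer points $\mathbf v_n=(x_n,y_n,z_n)$ and three independent linear forms, such as $\alpha^2X+\alpha Y+Z$, $X$ and $Y$, and verify that the product of their absolute values at $\mathbf v_n$ is eventually $<\|\mathbf v_n\|^{-\varepsilon}$. This is exactly where the hypothesis $|B_n|\le C^n$ is used: it bounds every convergent denominator --- hence every continuant appearing above and the approximation quality $|\alpha-\beta_n|$ --- by $C^{O(n)}$, while the inequalities $|W_n|,|V_n|\le c|U_n|$ of~\ref{it2} reduce all the relevant indices to the single parameter $|U_n|$, for which $K(U_n)\ge F_{|U_n|+1}$ is the matching lower bound and $|U_n|\to\infty$ (condition~\ref{it3}) makes the inequality strict for $n$ large. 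The Subspace Theorem then confines the $\mathbf v_n$ to finitely many proper subspaces of $\QQ^3$; passing to a subsequence in a fixed such subspace, one gets a nontrivial $\QQ$-linear relation among $x_n,y_n,z_n$, and a short argument (a further use of Roth's theorem inside the subspace, exploiting again that $\deg\alpha\ge 3$ and that $|\alpha-\beta_n|$ is so small) shows this is incompatible with $|U_n|\to\infty$. This contradiction proves that $\alpha$ cannot have degree $\ge 3$, so $\alpha$ is quadratic or transcendental.

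The step I expect to be the genuine obstacle is the bookkeeping of exponents in the Subspace Theorem application. A crude balancing of the bounds only succeeds when $C$ is small relative to the golden ratio and to the constant $c$ --- which is essentially the setting of the earlier Adamczewski--Bugeaud results, where one instead assumes $\limsup_n|W_n|/|U_n|$ small --- and covering all $\spadesuit$ and $\clubsuit$ words under the sole hypothesis of exponential convergent growth requires the more careful choice of evaluation point and of the (possibly four) linear forms that constitutes Bugeaud's improvement. A secondary technical point is to make sure that the linear relation produced in the last step genuinely yields a contradiction and not a trivial identity; this is where the non-periodicity of the word and the coprimality of successive convergents are needed.
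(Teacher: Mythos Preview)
The paper does not prove Theorem~\ref{thm:Bugeaud}: it is quoted from \cite{Bugeaud2010AutomaticCF} as the archimedean result on which the paper's own Theorems~\ref{thm:pAdicBugeaud} and~\ref{main-thm-fiori} are modeled. There is therefore no in-paper proof to compare your proposal against directly.

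That said, Sections~\ref{sec:proof-picche} and~\ref{sec:proof-fiori} are explicitly an adaptation of Bugeaud's argument, and reading them shows what your sketch is missing. Your overall architecture is right --- approximate $\alpha$ by the eventually periodic $\beta_n=[0,W_n,\overline{U_nV_n}]$, write down the quadratic $P_n$ it satisfies, and feed its coefficients into the Subspace Theorem --- and you are correct that a naive three-form application only succeeds when $\limsup |W_n|/|U_n|$ is small, which is the earlier Adamczewski--Bugeaud regime. But the fix that removes this restriction is not a single sharper choice of forms. In the $\spadesuit$ case one first applies the Subspace Theorem with \emph{four} forms to the quadruple $(z_{n,1},z_{n,2},z_{n,3},z_{n,4})$ of coefficients of $P_n$ (the analogue of Lemma~\ref{picche-appl-subspace-1}), obtaining a linear relation among them; one then splits on whether $(w_n)$ has a constant subsequence, and in each branch applies the Subspace Theorem \emph{again}, once or twice, now with three forms on carefully chosen triples of continuants, to force a relation $x_1+x_2\alpha+x_3\alpha^2=0$. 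The $\clubsuit$ case has its own three-way case split and iterated Subspace applications. Your ``short argument (a further use of Roth's theorem inside the subspace)'' is exactly where this iterated machinery lives; it is the heart of the proof, not a cleanup step, and your proposal does not yet contain it.
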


In this article, we consider the analogue of Theorem \ref{thm-bugeaud} in the $p$-adic setting, where $p$ is any odd prime. A first naive attempt could be to simply replace $\mathbb{N}$ with $\mathbb{Z}[1/p]$ and $\mathbb{R}$ with $\mathbb{Q}_p$. 
However, this translation is far from straightforward for various reasons.

A structural key difficulty in the $p$-adic case lies in the absence of a \emph{canonical} floor function $s:\mathbb{Q}_p\rightarrow \mathbb{Q}$. As we already noticed, there are infinitely many possible choices for $s$, and each leads to a distinct construction of $p$-adic continued fractions. In the last decades, many constructions of $p$-adic floor functions have been proposed, such as the classical ones by Ruban \cite{Ruban1970} and Browkin \cite{Browkin1978}.
However, none of them gives approximants as good as in the real case and all of them lack complete analogues of fundamental results from the real setting, such as Lagrange’s theorem. More precisely, it is either not true or unknown, in general, that all rational numbers arise from finite words (except for very specific choices of $s$), nor that all algebraic numbers of degree 2 arise from periodic words.

These differences make Question \ref{question-intro} even more challenging in this $p$-adic setting. The only known results in this context, provide negative answers to Question \ref{question-intro} only for very specific choices of floor functions and very special sets of words (see for instance \cites{ooto2017:padicTrasc}{longhi2024heights}). 

In this article, we provide, under some mild conditions, a negative answer to  Question \ref{question-intro} for $\spadesuit$ words and $\clubsuit$ words for any floor function, generalising most of the previous known results. More precisely, our main result is the following $p$-adic version of Theorem \ref{thm-bugeaud}:
\begin{theorem}\label{main-thm-intro}
     Let $p$ be an odd prime, let $\alpha\in \mathbb{Q}_p$ and let $s$ be a $p$-adic floor function. Suppose that $\alpha=[0,a_1,a_2,\dots]_s$ and let $A_n/B_n$ be the $n$-th convergent of this continued fraction expansion.  
    Suppose that the word $a_1 a_2 a_3 \cdots$ satisfies either condition $\spadesuit$ or condition $\clubsuit$ with constant $c>0$ and that for every $n\geq 1$: 
    \begin{enumerate}[label=(\roman*)]
        \item\label{bound-cont-abs-val} the sequences $(|A_n|_{\infty}^{1/n})_n, (|B_n|_{\infty}^{1/n})_n, (|B_n|_p^{1/n})_n$ are bounded;
        \item 
$|a_n|_p \geq p^k$ where $k$ is an explicit constant depending on $p$, $c$ and the bound for the sequence $\left(\max\left(|A_n|_{\infty}^{1/n}, |B_n|_{\infty}^{1/n}\right)\right)_n$.
\end{enumerate}
    Then, $\alpha$ is either rational, quadratic, or transcendental.
\end{theorem}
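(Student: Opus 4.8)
The plan is to transpose to the $p$-adic adelic setting the Schmidt Subspace Theorem strategy of Adamczewski--Bugeaud and Bugeaud~\cite{Bugeaud2010AutomaticCF}. Since a general $p$-adic floor function admits no Lagrange theorem, a rational number may have an infinite non-periodic expansion, so the conclusion is equivalent to the assertion that \emph{$\alpha$ cannot be algebraic of degree $d\ge 3$}; I would assume it is, aiming for a contradiction. From the defining property of a $p$-adic floor function, $|\gamma-s(\gamma)|_p<1$, one gets $|a_{n+1}|_p=|\gamma_{n+1}|_p=|\gamma_n-a_n|_p^{-1}>1$, and then, by induction on \eqref{def-continuants}, $|B_n|_p=\prod_{i=1}^n|a_i|_p$ and $|B_n\gamma_{n+1}+B_{n-1}|_p=|B_{n+1}|_p$; together with $\alpha=(A_n\gamma_{n+1}+A_{n-1})/(B_n\gamma_{n+1}+B_{n-1})$ and $A_{n-1}B_n-A_nB_{n-1}=\pm1$ this yields $|B_n\alpha-A_n|_p=|B_{n+1}|_p^{-1}$. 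Hypothesis~\ref{bound-cont-abs-val} gives a $C>1$ with $\max(|A_n|_\infty,|B_n|_\infty,|B_n|_p)\le C^{n}$, while the lower bound $|a_n|_p\ge p^{k}$ gives $|B_n|_p\ge p^{kn}$; hence the convergent $A_n/B_n$ is a rational of projective height $\le C^{2n}$ with $|\alpha-A_n/B_n|_p\le p^{-k(2n+1)}$, an approximation of exponent $>2$ as soon as $k>2\log C/\log p$.

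\emph{Auxiliary degree-$\le 2$ data.} Write $r_n=|W_n|$, $s_n=|U_n|$, $t_n=|V_n|$, $m_n=r_n+s_n+t_n$ and $\ell_n=m_n+s_n$; by~\ref{it2}, $\ell_n\le(2+2c)s_n$, and by~\ref{it3}, $s_n\to\infty$. In the $\spadesuit$ case, $W_nU_nV_nU_n$ being a prefix of $a_1a_2\cdots$ means $a_{m_n+j}=a_{r_n+j}$ for $1\le j\le s_n$, so the product of the $2\times 2$ continuant matrices over the index range $(m_n,\ell_n]$ equals that over $(r_n,r_n+s_n]$; eliminating this common factor produces explicit polynomial identities among the continuants $A_{r_n},A_{r_n+s_n},A_{m_n},A_{\ell_n}$ (and the $B$'s). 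Equivalently --- and this is the guiding picture --- one forms the number $\beta_n$ of degree $\le 2$ with eventually periodic expansion $[0,W_n,\overline{U_nV_n}]_s$: it shares its first $\ell_n$ continuants with $\alpha$, so by the previous paragraph $|\alpha-\beta_n|_p$ is extremely small, whereas the height of $\beta_n$, being a product of $O(\ell_n)$ continuants, is $\le C^{O(\ell_n)}$. The $\clubsuit$ case is identical once the quasi-periodicity relation is replaced by the classical identity relating the continuants of a word and those of its reversal, so that $W_nU_nV_n\widetilde{U_n}$ being a prefix again yields a $\beta_n$ of degree $\le 2$ agreeing with $\alpha$ to order $\ell_n$. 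The key quantitative feature is that the $p$-adic smallness is controlled by the \emph{repeated} block, of length $s_n$, while the heights involved are controlled by $\ell_n\le(2+2c)s_n$; this mismatch is exactly why the constant $k$ in the lower bound for $|a_n|_p$ has to be chosen large in terms of $p$, $c$ and $C$.

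\emph{Subspace Theorem and descent.} I would then apply the $p$-adic (adelic) Schmidt Subspace Theorem in Schlickewei's form to a system of three linearly independent linear forms in $(X_0,X_1,X_2)$ at the infinite place and three at the place $p$, with algebraic coefficients, chosen so that evaluation at the vectors $\underline{x}_n\in\mathbb{Z}[1/p]^3$ assembled from the continuant identities above makes the double product $\prod_v\prod_{i=1}^3|L_{i,v}(\underline{x}_n)|_v$ dominated by the $p$-adic smallness coming from $|\alpha-\beta_n|_p$; choosing $k$ as in the statement forces this product to be $\le H(\underline{x}_n)^{-\delta}$ for a fixed $\delta>0$ and all $n$. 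The theorem then confines the $\underline{x}_n$ to finitely many proper rational subspaces, so along an infinite subsequence --- infinite because $s_n\to\infty$ makes the $\underline{x}_n$ pairwise distinct, and $\beta_n\ne\alpha$ because $\alpha$ is not ultimately periodic --- they all satisfy one fixed nontrivial linear relation; substituting this relation into the continuant identities and letting $s_n\to\infty$ forces the successive approximations to collapse, which is possible only if $\alpha$ itself satisfies a polynomial relation of degree $\le 2$ over $\mathbb{Q}$, contradicting $d\ge 3$.

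\emph{Where the difficulty lies.} The two delicate steps are: (a) verifying the product inequality of the Subspace Theorem, i.e.\ checking that the archimedean growth of the continuants, controlled only by~\ref{bound-cont-abs-val}, is genuinely outweighed by the $p$-adic decay coming from $|a_n|_p\ge p^k$ --- this is where the explicit $k$ (a function of $p$, $c$ and $C$) gets pinned down, and it plays the role of the ``ratios small enough'' hypotheses present in the earlier real-variable results; and (b) the descent, namely deducing from ``the $\underline{x}_n$ lie in a fixed proper subspace for infinitely many $n$'' that $\alpha$ has degree $\le 2$. Step (b) is the technical heart of~\cite{Bugeaud2010AutomaticCF} in the real case, and $p$-adically one has to be especially careful: because there is no Lagrange theorem one may never argue through the continued fraction expansion of $\beta_n$ itself, which is why the construction must be carried out entirely in terms of explicit continuant identities rather than expansions of the $\beta_n$.
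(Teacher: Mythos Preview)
Your high-level plan is the paper's plan: approximate $\alpha$ by numbers of degree $\le 2$ built from the $\spadesuit$/$\clubsuit$ structure, feed carefully chosen vectors of continuants into Schlickewei's $p$-adic Subspace Theorem, and extract from the resulting linear relation a degree-$\le 2$ equation for $\alpha$. The setup in your first two paragraphs (the identities $|B_n|_p=\prod |a_i|_p$, $|B_n\alpha-A_n|_p=|B_{n+1}|_p^{-1}$, the height bounds, the role of $k$) is correct and matches the paper.

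Where your proposal falls short is precisely at the two steps you flag as delicate, and there the gaps are real rather than cosmetic. First, a single application in three variables is not enough. In the $\spadesuit$ case the paper's opening application is in \emph{four} variables, separating the middle coefficient of the quadratic $P_n(X)=z_{n,1}X^2-(z_{n,2}+z_{n,3})X+z_{n,4}$ into the two pieces $z_{n,2}=B_{w_n-1}A_{\ell_n}-B_{w_n}A_{\ell_n-1}$ and $z_{n,3}=A_{w_n-1}B_{\ell_n}-A_{w_n}B_{\ell_n-1}$; the ensuing descent exploits this finer relation, and in one branch deduces $z_{n,2}=z_{n,3}$ before a \emph{second} Subspace application in three variables. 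Your single three-variable step does not capture this. Second, the descent is not ``substitute and let $s_n\to\infty$'': the paper splits on whether $(|W_n|)_n$ has a constant subsequence. In the constant case it builds an auxiliary $\beta=\lim B_{\ell_n-1}/B_{\ell_n}$, proves $\beta$ irrational using the uniqueness of the expansion attached to $s$ (contrary to your suggestion that one ``may never argue through'' expansions), and runs two further three-variable Subspace applications to obtain two linear relations in $(1,\alpha,\beta)$ and $(1,\alpha,1/\beta)$. In the non-constant case one first arranges $a_{w_n}\ne a_{\ell_n}$, then shows $P(\alpha)=0$ via a dichotomy on $|Q_n-1|_p$ (again using the floor-function axiom to force $a_{w_n}=a_{\ell_n}$ in the bad branch), and only then applies Subspace once more.

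Finally, the $\clubsuit$ case is not ``identical''. The approximants are \emph{rational}, not quadratic, and the argument hinges on the palindromic symmetry $A_{2u_n}=B_{2u_n-1}$ of the continuant matrix together with the mirror identity $B_{n-1}/B_n=[0,a_n,\ldots,a_1]_s$. The paper splits into three cases according to whether $W_n$ and $V_n$ are (eventually) empty; the general case introduces a pair $(P_n/Q_n,\ P'_n/Q'_n)$ built from $W_nU_nV_n\widetilde{U_n}\widetilde{W_n}$ and its truncation, and needs two successive Subspace applications (first in four, then in three variables) to conclude. None of this structure is visible in your sketch.
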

A more precise version of Theorem \ref{main-thm-intro} with explicit constants can be found in Section \ref{sec:main-res-examples} (more precisely, see Theorem  \ref{thm:pAdicBugeaud} for $\spadesuit$ words and Theorem \ref{main-thm-fiori} for $\clubsuit$ words). As a first corollary, we have the following result (proved in Section~\ref{subsec:applicationBrRub}) concerning the classical Ruban's and Browkin's continued fractions (see Section \ref{sub:Browkin_Ruban_floor} for more details):
\begin{corollary}
\label{cor:browRub}
    Let $p$ be an odd prime and let $s$ be a $p$-adic floor function. Let $\alpha=[0,a_1,a_2,\dots]_s$ and suppose that the sequence $(|B_n|_p^{1/n})_n$ is bounded, where $B_n$ is defined in \eqref{def-continuants}. Moreover, suppose that one of the following holds:
    \begin{enumerate}[label=(\alph*)]
        \item \label{itm:rep} the word $a_1a_2a_3 \cdots$ starts with arbitrarily long repetitions and either
    \begin{itemize}
     \item $s$ is Ruban's floor function, $\alpha$ is irrational and $|a_n|_p \geq p^4$ for every $n$, or
            \item $s$ is Browkin's floor function and $|a_n|_p \geq p^3$ for every $n$.
        \end{itemize}
         \item \label{itm:palin} the word $a_1a_2a_3 \cdots$ starts with arbitrarily long palindromes and either
        \begin{itemize}
        \item $s$ is Ruban's floor function, $\alpha$ is irrational and $|a_n|_p \geq p^2$ for every $n$, or
            \item $s$ is Browkin's floor function.
        \end{itemize}
    \end{enumerate}
     Then, $\alpha$ is either quadratic or transcendental.
\end{corollary}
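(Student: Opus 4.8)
The plan is to derive Corollary~\ref{cor:browRub} from the explicit forms of Theorem~\ref{main-thm-intro} --- namely Theorem~\ref{thm:pAdicBugeaud} for $\spadesuit$ words and Theorem~\ref{main-thm-fiori} for $\clubsuit$ words --- by verifying their hypotheses for Ruban's and Browkin's floor functions. First I would dispose of the degenerate case: if the word $a_1 a_2 a_3\cdots$ is ultimately periodic, then the classical argument works for any floor function (one has $\gamma_m=[a_m,a_{m+1},\ldots]_s$; periodicity gives $\gamma_m=[a_m,\ldots,a_{m+\ell-1},\gamma_m]$, which by~\eqref{def-continuants} is a fixed-point relation $\gamma_m=(P_{\ell-1}\gamma_m+P_{\ell-2})/(Q_{\ell-1}\gamma_m+Q_{\ell-2})$ with $P_i,Q_i\in\mathbb{Q}$, hence a quadratic equation over $\mathbb{Q}$; and $\alpha$ is a $\mathbb{Q}$-rational M\"obius image of $\gamma_m$), so $\alpha$ is rational or quadratic. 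From now on the word is not ultimately periodic. Next I would translate the combinatorial hypotheses into properties $\spadesuit$/$\clubsuit$: if the word starts with arbitrarily long squares $\rho_n\rho_n$, taking $U_n=\rho_n$, $V_n=W_n=\varepsilon$ gives property $\spadesuit$ with constant $c$ for \emph{every} $c>0$; if it starts with arbitrarily long palindromes $\pi_n$, writing $\pi_n=U_n\widetilde{U_n}$ or $\pi_n=U_n x_n\widetilde{U_n}$ according to the parity of $|\pi_n|$ and taking $V_n\in\{\varepsilon,x_n\}$, $W_n=\varepsilon$, gives property $\clubsuit$ with constant $c$ for every $c>0$ (discarding the finitely many short $\pi_n$). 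The freedom to take $c$ arbitrarily small is what will make the explicit constant $k$ in the two theorems as small as possible.

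Next I would check condition~\ref{bound-cont-abs-val} of Theorem~\ref{main-thm-intro}. Boundedness of $(|B_n|_p^{1/n})_n$ is assumed, so only the archimedean continuant bounds remain. Here one invokes the basic properties of Ruban's and Browkin's floor functions recalled in Section~\ref{sub:Browkin_Ruban_floor}: in both cases the partial quotients with $n\geq 1$ satisfy $|a_n|_\infty\leq\lambda(p)$ for an explicit $\lambda(p)$, so the recurrences~\eqref{def-continuants} yield $\max(|A_n|_\infty,|B_n|_\infty)\leq M(p)^{\,n+1}$ for an explicit $M(p)$, and hence $(|A_n|_\infty^{1/n})_n$ and $(|B_n|_\infty^{1/n})_n$ are bounded.

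It then remains to verify the hypothesis $|a_n|_p\geq p^k$. Inspecting Theorems~\ref{thm:pAdicBugeaud} and~\ref{main-thm-fiori}, $k$ is a nondecreasing function of $c$ and of the continuant-growth bound $M$; evaluating it as $c\to 0^+$ with $M=M(p)$ from the previous step, one computes $k\leq 4$ for Ruban with squares, $k\leq 3$ for Browkin with squares, $k\leq 2$ for Ruban with palindromes, and $k\leq 1$ for Browkin with palindromes --- the sharper thresholds for Browkin reflecting its better archimedean bound $\lambda(p)$ and the better $p$-adic quality of its convergents. Since $|a_n|_p=|\gamma_n|_p>1$, hence $|a_n|_p\geq p$, for $n\geq 1$ (a standard property of these floor functions), the last case needs no extra assumption, while in the others the stated bound on $|a_n|_p$ suffices. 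Thus Theorem~\ref{main-thm-intro} applies and $\alpha$ is rational, quadratic, or transcendental; the rational alternative is excluded because a rational has a finite Browkin expansion (contradicting the infiniteness of the word), respectively because $\alpha$ is assumed irrational in the Ruban case. Hence $\alpha$ is quadratic or transcendental.

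The main obstacle I expect is this last step: one must track the floor-function-specific estimates ($\lambda(p)$, hence $M(p)$, together with the $p$-adic valuations of the continuants and of $A_nB_{n-1}-A_{n-1}B_n$) all the way through the explicit expression for $k$ and confirm that the limit $c\to 0^+$ produces exactly the exponents $4,3,2$ (and $\leq 1$) claimed. In particular, making $\lambda(p)$ sharp enough for Browkin --- so that $M(p)$ is good enough to push $k$ down to $3$ and $1$ rather than the coarser $4$ and $2$ coming out for Ruban --- is the quantitatively delicate point.
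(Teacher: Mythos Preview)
Your overall plan coincides with the paper's: verify the hypotheses of Theorems~\ref{thm:pAdicBugeaud} and~\ref{main-thm-fiori} with the explicit archimedean bound $C_\infty=p+1$ (Ruban) or $C_\infty=p/2+1$ (Browkin), then read off $k$. For the $\spadesuit$ case your argument is fine: with $W_n=V_n=\varepsilon$ the formula $k=\lfloor\max(3,2(3c+1))\log C_\infty/\log p\rfloor+1$ is constant for $0\le c<1/6$, so ``$c\to 0^+$'' and ``$c=0$'' give the same $k$, namely $4$ and $3$.

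There is, however, a genuine gap in your treatment of the $\clubsuit$ case. The formula for $k$ in Theorem~\ref{main-thm-fiori} is \emph{discontinuous} at $c=0$: for $c=0$ one has $k=\lfloor\log C_\infty/\log p\rfloor+1$, whereas for any $c>0$ one has $k=\lfloor(4+6c)\log C_\infty/\log p\rfloor+1$. Letting $c\to 0^+$ therefore yields $k=\lfloor 4\log C_\infty/\log p\rfloor+1$, which for Browkin gives $k=4$ (e.g.\ $p=3$: $4\log(5/2)/\log 3\approx 3.34$) and for Ruban gives $k\ge 5$, far from the claimed $k\le 1$ and $k\le 2$. Your asserted values ``$k\le 2$ for Ruban, $k\le 1$ for Browkin'' are the $c=0$ values, not the $c\to 0^+$ limits, so the computation as you describe it does not produce them.

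The paper obtains the correct exponents by invoking the $c=0$ branch of Theorem~\ref{main-thm-fiori} directly, i.e.\ by taking $W_n=V_n=\varepsilon$. In your decomposition of an odd palindrome $\pi_n=U_n x_n\widetilde{U_n}$ this is impossible ($|V_n|=1$), so to follow the paper you must restrict to even-length palindromic prefixes and argue that these occur unboundedly often; only then does the sharper $c=0$ constant apply. (The paper simply asserts ``$c=0$'' without discussing parity; if you want to be fully rigorous you should either justify the existence of arbitrarily long even palindromic prefixes, or note that the odd case falls under Case~(2) of the proof of Theorem~\ref{main-thm-fiori}, where the estimate actually used is $k>(4+2c)\log C_\infty/\log p$ with $w_n=0$---still not strong enough with $c>0$ to reach $k=1,2$ without a closer look.)

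Your handling of the ultimately periodic case and of the exclusion of the rational alternative is correct and slightly more explicit than the paper's.
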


In another direction, we also have the following corollary (proved in Section~\ref{applfinitealphabet}) on continued fractions arising from automatic sequences: 
\begin{corollary} \label{cor:largep} Let $n,m$ be non zero distinct integers. Let $\omega=a_1 a_2\cdots$ be an automatic word on the alphabet $\mathcal{A}=\{\frac{n}{p}, \frac{m}{p}\}$ where $p$ is an odd prime.  Then, for every $p$ large enough, there is a $p$-adic floor function $s$ such that $\mathcal{A}\subset \mathrm{Im}(s)$ and the number $[0,a_1,a_2,\ldots]_s$ is either rational, quadratic or transcendental.
\end{corollary}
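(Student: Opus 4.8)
The plan is to derive the statement from the explicit version of Theorem~\ref{main-thm-intro} for $\spadesuit$ words, i.e.\ Theorem~\ref{thm:pAdicBugeaud}. Fix the automatic word $\omega=a_1a_2\cdots$; only the labelling of its two letters as $n/p$ and $m/p$ depends on $p$, not its combinatorial structure. Throughout take $p$ larger than $|n|$, $|m|$ and $|n-m|$, so that $p\nmid n$, $p\nmid m$, $p\nmid(n-m)$; then $|a_i|_p=p$ for all $i$, the convergents $A_n/B_n$ of any word on $\mathcal A$ are $p$-adically Cauchy (since $v_p(A_n/B_n-A_{n-1}/B_{n-1})=2n-1\to\infty$), and $n/p$, $m/p$ are distinct elements of $\mathbb{Z}[1/p]$ of $p$-adic valuation $-1$ lying in distinct classes modulo $p\mathbb{Z}_p$. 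If $\omega$ is ultimately periodic, then the standard argument behind Lagrange's theorem shows that $[0,a_1,a_2,\ldots]_s$ lies in the (at most quadratic) field generated by the fixed point of the Möbius transformation attached to the period, hence is rational or quadratic and we are done; so assume from now on that $\omega$ is not ultimately periodic, which in particular forces both letters to occur in $\omega$.

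Next I would construct $s$. Put $\alpha:=\lim_nA_n/B_n\in\mathbb{Q}_p$ and let $\gamma_0=\alpha$, $\gamma_{n+1}=1/(\gamma_n-a_n)$ be the orbit of the algorithm \eqref{CF-algo-floor-fun-def}. A standard continuant computation --- using $v_p(B_n)=-n$ (immediate from \eqref{def-continuants} and $|a_i|_p=p$) and $\alpha B_n-A_n=(-1)^n/(\gamma_{n+1}B_n+B_{n-1})$ --- gives $v_p(\alpha B_n-A_n)=n+1$ and hence $v_p(\gamma_n-a_n)=1$ for every $n\geq 1$. Consequently, if $\gamma_i\equiv\gamma_j\pmod{p\mathbb{Z}_p}$ then $a_i\equiv a_j\pmod{p\mathbb{Z}_p}$, and since $n/p\not\equiv m/p\pmod{p\mathbb{Z}_p}$ this forces $a_i=a_j$; thus the assignment $\gamma_i\mapsto a_i$ is consistent, and (as it also respects $v_p$) it extends to a $p$-adic floor function $s$ with $\mathcal A=\{n/p,m/p\}\subseteq\mathrm{Im}(s)$ --- the extension to the remaining classes being the routine task, recalled in Section~\ref{subsec-p-adic floor-function}, of choosing admissible representatives. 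By construction $\alpha=[0,a_1,a_2,\ldots]_s$, so it remains to verify the hypotheses of Theorem~\ref{thm:pAdicBugeaud}.

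Since $\omega$ is automatic, Cobham's theorem \cite{cobham68:tagMachinesComplexity} gives that its subword complexity is $O(N)$, with implied constant depending only on $\omega$; hence, by \cite[Theorem~1.1]{Bugeaud2010AutomaticCF} (as in the discussion following Definition~\ref{def-spade-club-with-constant}), the non-ultimately-periodic word $\omega$ has property~$\spadesuit$ with some constant $c=c(\omega)>0$ not depending on $p$. For hypothesis~\ref{bound-cont-abs-val}: the alphabet is finite, so $|a_i|_\infty\leq\mu_p:=\max(|n|,|m|)/p$ for all $i$; feeding this into \eqref{def-continuants} yields $|A_n|_\infty,|B_n|_\infty\leq C_p\,\theta_p^{\,n}$ with $\theta_p=\tfrac{1}{2}\bigl(\mu_p+\sqrt{\mu_p^2+4}\bigr)$, so $\limsup_n|A_n|_\infty^{1/n}\leq\theta_p$, $\limsup_n|B_n|_\infty^{1/n}\leq\theta_p$, and $\theta_p\to1$ as $p\to\infty$. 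Moreover $v_p(B_n)=-n$ gives $|B_n|_p^{1/n}=p$ for all $n$, so $(|B_n|_p^{1/n})_n$ is bounded. Hence \ref{bound-cont-abs-val} holds, with $\theta_p$ as the relevant archimedean bound.

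Finally, hypothesis~(ii) requires $|a_i|_p\geq p^k$ where $k=k(p,c,\theta_p)$ is the explicit constant appearing in Theorem~\ref{thm:pAdicBugeaud}; since $|a_i|_p=p$, this is exactly the requirement that $k\leq1$. This is the heart of the matter: one must unwind the definition of $k$ and check that, as $p\to\infty$ with $c=c(\omega)$ fixed, it can be taken to equal $1$. The reason to expect this is that the transcendence criterion balances the $p$-adic quality of the quadratic approximations produced by the $\spadesuit$-structure against their archimedean heights, which grow like powers of $\theta_p$; the threshold for $k$ should therefore be governed by a quantity of the shape $c'(\omega)\cdot\dfrac{\log\theta_p}{\log p}$, and since $c'(\omega)$ is fixed while $\log\theta_p\to0$ this is $<1$ for all large $p$. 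Granting this, every hypothesis of Theorem~\ref{thm:pAdicBugeaud} holds for $p$ large enough, and we conclude that $\alpha=[0,a_1,a_2,\ldots]_s$ is rational, quadratic, or transcendental. I expect essentially all of the genuine work to be in this last asymptotic estimate for $k$, together with checking that the extension of $\gamma_i\mapsto a_i$ really does yield a $p$-adic floor function in the sense of Section~\ref{subsec-p-adic floor-function}; the remaining verifications are routine.
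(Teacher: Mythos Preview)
Your argument is correct and follows essentially the same route as the paper: reduce to Theorem~\ref{thm:pAdicBugeaud} via Cobham's theorem and Theorem~\ref{bugeaud-complexity-picche} to get a $\spadesuit$ constant $c=c(\omega)$ independent of $p$, then observe that the archimedean bound $C_\infty$ tends to $1$ as $p\to\infty$ so that the threshold $k$ drops to $1$. The paper simply packages these steps into the intermediate Corollaries~\ref{cor:fin-alph} and~\ref{corollario-automatiche}.

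Two remarks. First, the step you flag as ``the heart of the matter'' is in fact immediate once you read the explicit formula in Theorem~\ref{thm:pAdicBugeaud}: $k=\lfloor\max(3,6c+2)\,\log C_\infty/\log p\rfloor+1$, and with $C_\infty\le\theta_p\to1$ (or simply $C_\infty=\mu_p+1$ as in Lemma~\ref{lem-An-Bn}) one has $\log C_\infty\to0$, so $k=1$ for all large $p$; your expected shape $c'(\omega)\log\theta_p/\log p$ is exactly right. Second, your construction of $s$ via the orbit $(\gamma_i)$ is correct but more elaborate than needed: since $n/p$ and $m/p$ lie in distinct cosets modulo $p\ZZ_p$ and have $p$-adic valuation $-1$, one may pick \emph{any} floor function $s$ sending those two cosets to $n/p$ and $m/p$ respectively, and then the bijection noted after the definition of $p$-adic floor function (Section~\ref{subsec-p-adic floor-function}) guarantees that the given word arises as $[0,a_1,a_2,\ldots]_s$ for a unique $\alpha\in p\ZZ_p$. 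This is how the paper handles it in the proof of Corollary~\ref{corollario-automatiche}.
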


Our proof strategy for Theorem \ref{main-thm-intro} is an adaption of that of Bugeaud \cite[Theorem 1.3]{Bugeaud2010AutomaticCF}. There, the main tool used is Schmidt's subspace theorem, which is a generalization of fundamental results by Liouville, Thue, and Roth, on the approximation of algebraic numbers by means of sequences of rational numbers (we refer to~\cite[Ch.\,6-7]{BombieriGubler2006} for an overview). The basic idea is that, if a number is approximated \emph{too well} by infinitely many  rational numbers, then it must be transcendental. This already gives an intuition, in the real case, of why words containing arbitrarily large repeated prefixes fail to yield an algebraic number, given that infinitely many of its convergents are \emph{good approximations} of some periodic words (i.e.\ algebraic numbers of degree $2$).
In our non-archimedean world, this general philosophy still applies, but one needs some extra work and a $p$-adic version of the subspace theorem proved by Schlikewei \cites{Schlick1,Schlick2} (see Section \ref{sec:p-adic-subspace} for details).

More specifically, the main idea underlying the proof of Theorem \ref{main-thm-intro} is to leverage the $\spadesuit$ (resp.\ $\clubsuit$) structure of $\alpha$ to approximate it by a sequence of quadratic (resp.\ rational) numbers $\alpha_n$ having a periodic (resp.\ palindromic) continued fraction expansion of the form $[W_n\overline{U_n V_n}]_s$ (resp.\ $[W_n U_n V_n\widetilde{U_n}]_s$). Then $\alpha$ and $\alpha_n$ have the same $|W_n|+2|U_n|+|V_n|$-th convergent, so that the quality of approximation is very good relatively to the arithmetic complexity (more precisely, the Weil height, cf.\ Definition~\ref{def:height}) of $\alpha_n$ which only depends on the $|W_n|+|U_n|+|V_n|$-th continuants. Thus, assuming $\alpha$ algebraic, one can construct suitable linear forms and a sequence of vectors depending on $\alpha_n$ that make the $p$-adic Subspace Theorem (Theorem \ref{thm:subspace-appl}) applicable. Then one gets a linear relation between some continuant of the continued fraction for $\alpha$. Finally, a brute force approach case by case, allows to transform it into a linear relation between $1,\alpha,\alpha^2$.

The structure of our article is as follows. In Section \ref{subsec:complexity}, 
we recall a result of Bugeaud linking word complexity and condition $\spadesuit$. In Section \ref{sec:main-res-examples}, we present explicit versions of Theorem \ref{main-thm-intro} (namely, Theorem \ref{thm:pAdicBugeaud} for $\spadesuit$ words and Theorem \ref{main-thm-fiori} for $\clubsuit$ words) along with applications, including proofs of Corollary \ref{cor:browRub} and Corollary \ref{cor:largep}. Section \ref{sec:p-adic-subspace} provides a statement of the $p$-adic Subspace Theorem, a key ingredient in our proofs. In Section \ref{sec:proof-picche} we prove Theorem  \ref{thm:pAdicBugeaud} and in Section \ref{sec:proof-fiori} we prove Theorem \ref{main-thm-fiori}. Finally, in Appendix \ref{section-2-picche-fiori-esempi}, we examine $\spadesuit$ and $\clubsuit$ words in detail, exploring their connections with well-known classes of words, such as those arising from automatic and Sturmian sequences, and providing concrete examples.

\section{Some results on words complexity}
\label{subsec:complexity}

The goal of this short section is to recall a result of Bugeaud \cite{Bugeaud2010AutomaticCF} relating word complexity and condition $\spadesuit$.

Recall that \emph{word} $\omega$ is a (possibly infinite) string of symbols  from a non-empty set $\mathcal{A}$, called \emph{alphabet}.
For a finite word $\omega$, we recall that we denote its length by $|\omega|$ and we denote by $\widetilde{\omega}$ the word obtained by reversing the order of the letters of $\omega$. We also denote by $\overline{\omega}$ the infinite purely periodic word of period $\omega$.

The \emph{complexity function} of a word $\omega=a_1a_2a_3 \cdots$ is the function $p_{\omega}:\mathbb{N}\rightarrow \mathbb{N}$ defined as 
\begin{equation*}
    p_{\omega}(n) =\# \{a_{\ell+1} \dots a_{\ell + n} \mid \ell \geq 0\},
\end{equation*}
that is,  $p_{\omega}(n)$ is the number of distinct blocks of length $n$ appearing in the writing of $\omega$. 

For example, the periodic word $\omega=\overline{011}$ has $p_{\omega}(1)=2$, $p_{\omega}(n)=3$ for all $n\geq 2$. More generally, a classical result in ~\cite[Thm.\,7.3]{morseHedlund:symbolicDyn1} shows that either $\omega$ is ultimately periodic, in which case the function $p_{\omega}$ is bounded, or $p_{\omega}(n)\geq n+1$ for all $n\geq 1$.

In view of this result and linking to our study of continued fraction, we have that, if the alphabet is $\NN$, the word complexity of an infinite word $\omega=a_1a_2a_3 \cdots$ gives information on the nature of the real number $\alpha=[0,a_1,a_2,\ldots]$. 

Indeed,  if $p_{\omega}(n)\leq n$ for some $n\geq 1$, then $\omega$ is ultimately periodic, hence, by Lagrange theorem, $\alpha$ is algebraic of degree 2.
 On the other hand, Bugeaud \cite[Theorem 1.1]{Bugeaud2010AutomaticCF} proves that if $\omega$ is not ultimately periodic and $\alpha$ is algebraic, then $p_{\omega}(n)$ cannot grow \emph{too slowly} i.e.\ one must have
\begin{equation}\label{eq-bugeaud-complexity}
    \lim_{n \rightarrow \infty} \frac{p_{\omega}(n)}{n}=\infty.
\end{equation}
A first step in his proof is to show that \emph{low complexity} words satisfy condition $\spadesuit$. More precisely (see \cite[Proof of Theorem 1.1]{Bugeaud2010AutomaticCF}):
\begin{theorem}\label{bugeaud-complexity-picche}
    Let $(a_i)_{i\geq 1}$ be a sequence of strictly positive integers which is not ultimately periodic and consider the word $\omega=a_1 a_2 a_3 \cdots$. Suppose that there exists an infinite set $\mathcal{N}\subset \mathbb{N}$ and a positive real $C>0$ such that $p_{\omega}(n)\leq n C$ for all $n\in\mathcal{N}$. Then $\omega$ satisfies condition $\spadesuit$ with constant $c=3C+1$. 
\end{theorem}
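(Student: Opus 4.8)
The plan is to extract the words $U_n, V_n, W_n$ from a pigeonhole argument on repeated blocks, exploiting the hypothesis $p_\omega(n)\le nC$ along the infinite set $\mathcal N$. Fix $n\in\mathcal N$ large. Since there are at most $nC$ distinct blocks of length $n$ but we can look at the $\lceil nC\rceil+1$ prefixes of $\omega$ of the form $a_1\cdots a_{j+n}$ with $0\le j\le \lceil nC\rceil$, by pigeonhole two of the length-$n$ windows $a_{i+1}\cdots a_{i+n}$ and $a_{j+1}\cdots a_{j+n}$ starting at positions $i<j\le \lceil nC\rceil$ inside the prefix must coincide. Set $U_n = a_{i+1}\cdots a_{i+n}$ (so $|U_n|=n$), let $W_n = a_1\cdots a_i$ be the prefix before the first occurrence (so $|W_n|=i\le \lceil nC\rceil \le (C+1)n$ for $n\ge 1$, giving $|W_n|/|U_n|\le C+1$), and let $V_n$ be the block $a_{i+n+1}\cdots a_j$ sitting strictly between the two occurrences, so that $W_n U_n V_n U_n = a_1\cdots a_{j+n}$ is genuinely a prefix of $\omega$; here $|V_n| = j-(i+n)$. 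The subtlety is that $V_n$ could have negative "length" if the two windows overlap, i.e. if $j-i<n$; this is the first thing to handle.

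The key case distinction is whether the two equal windows overlap. If $j-i\ge n$ the windows are disjoint and $|V_n|=j-i-n\le \lceil nC\rceil - n$, so $|V_n|/|U_n|\le C$ and conditions \ref{it1}, \ref{it2} hold directly with constant $\max(C, C+1)=C+1$, which is $\le 3C+1$. If instead $j-i<n$, i.e. the occurrences overlap, one uses the standard fact that an overlap forces periodicity: the prefix $a_1\cdots a_{j+n}$ has period $d:=j-i<n$ on the segment from position $i+1$ onward (length $\ge n+d$, comfortably more than $2d$), so one can instead take $U_n' = a_{i+1}\cdots a_{i+d}$ of length $d$, $W_n' = a_1\cdots a_i$, $V_n'$ empty, and still get $W_n' U_n' V_n' U_n'$ a prefix with the window simply shifted by one period. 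The bound becomes $|W_n'|/|U_n'| = i/d$, which is not obviously controlled since $d$ may be tiny. The way Bugeaud handles this — and what I would mimic — is to slide the block $U_n'$ as far right as possible inside the periodic portion: replace the initial occurrence by the occurrence starting at position $i + \lfloor (\text{length of periodic run})/d - 2\rfloor d$ or similar, so that the new prefix $W_n''$ and the new $V_n''$ both have length bounded by a constant multiple of $|U_n''|=d$; chasing the arithmetic gives the constant $3C+1$. This repositioning-inside-the-periodic-run step, and getting the constant exactly right, is where the real bookkeeping lies.

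Finally one checks \ref{it3}: the construction produces, for each $n\in\mathcal N$, a triple with $|U_n|\le n$ but — crucially — with $|U_n|$ going to infinity as $n\to\infty$. This needs a short argument: if $|U_n|$ stayed bounded along $\mathcal N$, then $\omega$ would contain, infinitely often at bounded positions, a repetition $U V U$ with $|U|,|V|,|W|$ all bounded, which (since $\mathcal N$ is infinite and the positions $|W_n|$ are bounded) would force $\omega$ itself to be ultimately periodic, contradicting the hypothesis. So after passing to a subsequence of $\mathcal N$ we may assume $(|U_n|)_n$ is unbounded, giving condition $\spadesuit$ with constant $3C+1$. The main obstacle, as indicated, is the overlapping-windows case: turning an overlap into a clean $\spadesuit$-triple with the stated constant requires carefully relocating the repeated block within the induced periodic stretch rather than just invoking periodicity abstractly.
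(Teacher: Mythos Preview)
The paper does not prove this theorem itself; it merely cites \cite[Proof of Theorem 1.1]{Bugeaud2010AutomaticCF}. Your overall architecture --- pigeonhole on the first $\lceil nC\rceil+1$ length-$n$ windows, then a case split on whether the two coinciding windows overlap --- is the standard argument and matches Bugeaud's.

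There is, however, a genuine gap in your overlapping case. When $d=j-i<n$ you propose to keep $|U_n''|=d$ and slide the block within the periodic run so that $|W_n''|$ becomes ``a constant multiple of $d$''. This cannot work: the periodic stretch only begins at position $i+1$, so no matter how you slide inside it, the prefix $W$ still has length at least $i$, which may be as large as $\lceil nC\rceil$ while $d$ may equal $1$. Hence $|W|/|U|\ge i/d$ is uncontrolled and condition~\ref{it2} is not established. The same defect undermines your unboundedness argument: you invoke ``the positions $|W_n|$ are bounded'', but that would follow from~\ref{it2}, which you have not proved in this case; and even if $|W_n|,|U_n|,|V_n|$ were all bounded, having a fixed finite prefix $WUVU$ infinitely often says nothing about eventual periodicity of $\omega$.

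The fix is to take $|U_n|$ proportional to $n$, not to $d$. Since $a_{i+1}\cdots a_{j+n}$ has period $d$ and length $n+d$, the block $U_n:=a_{i+1}\cdots a_{i+m}$ with $m=\lfloor (n+1)/2\rfloor$ reappears (by periodicity) starting at position $i+\lceil m/d\rceil d+1$, and one checks $\lceil m/d\rceil d+m\le n+d$, so $W_nU_nV_nU_n$ is a genuine prefix with $|U_n|\ge n/2$, $|V_n|<d<n$, $|W_n|=i\le nC+1$. Now both ratios are bounded uniformly (well inside $3C+1$), and $|U_n|\to\infty$ is automatic along $\mathcal N$, so no separate unboundedness argument is needed.
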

To the best of the authors' knowledge, there is no direct analogue of Theorem~\ref{bugeaud-complexity-picche} for words satisfying condition $\clubsuit$. In particular, it remains unclear whether this set contains specific complexity classes.

\section{Toolbox for the \texorpdfstring{$p$}{p}-adic case}\label{sec-toolbox-p-adic}

In this section we recall some basic properties of the $p$-adic continued fraction algorithms that will be used later.
In what follows, we will fix a prime $p>2$ and we will denote by $|\cdot|_p$ and $|\cdot|_{\infty}$  the $p$-adic and the Euclidean norm respectively and by $v_p$ the usual $p$-adic valuation.

\subsection{\texorpdfstring{$p$}{p}-adic floor functions}\label{subsec-p-adic floor-function}

There are several ways to construct continued fraction algorithms over $\QQ_p$, and several authors as Mahler \cite{Mahler1940}, Schneider \cite{Schneider1970}, Ruban \cite{Ruban1970} and Browkin \cite{Browkin1978, Browkin2000}  proposed different algorithms in the attempt of achieving the same properties which hold in the real case. All these constructions are based on the choice of a so called \textit{$p$-adic floor function} $s: \QQ_p \rightarrow \ZZ[1/p]$. We give the precise definition (which can also be found in \cite[Section 1]{Browkin1978}). 
\begin{definition}
\label{def:pFloor}
A \emph{$p$-adic floor function} is any function $s \colon \mathbb{Q}_p \rightarrow \mathbb{Z}[1/p]$ such that
\begin{enumerate}[label=(\roman*)]
    \item\label{item-i} $|\alpha - s(\alpha)|_p < 1$ for every $\alpha \in \mathbb{Q}_p$;
\item\label{item-iii} $s(0)=0$; \
    \item\label{item-iv} if $|\alpha - \beta|_p < 1$, then $s(\alpha) = s(\beta)$.
\end{enumerate}
\end{definition}

Given $\alpha \in \QQ_p$ and a floor function $s$, the continued fraction expansion $[a_0,a_1,\ldots]_s$ of $\alpha$ is obtained via the algorithm: 
\[
\begin{cases}
\gamma_0 &=\alpha, \\
a_n &= s(\gamma_n), \\
\gamma_{n+1} &=\frac{1}{\gamma_n - a_n} \ \ \mbox{if } \gamma_n-a_n\neq 0.
\end{cases}
\]
and it stops if $\gamma_n=a_n$. Notice that, by definition, $|a_i|_p>1$ for every $i\geq 1$.

It is proved in \cite[Section 2, Theorem 2]{Browkin1978} that, for any choice of $s$, the map sending $\alpha$ to the partial quotients of its continued fraction expansion is a bijection between $\QQ_p$ and the (possibly infinite) sequences $a_0,a_1,a_2,\dots$ with $a_i \in \mathrm{Im}(s)$ and $|a_i|_p>1$ for $i \geq 1$. Since $a_0=0$ if and only if $|\alpha|_p<1$ (this follows directly from Definition~\ref{def:pFloor}), such map restricts to a bijection between $p \ZZ_p$ and the (possibly infinite) sequences $0,a_1,a_2,\dots$ with  $a_i \in \mathrm{Im}(s)$ and $|a_i|_p>1$ for $i \geq 1$.

\subsection{Some useful estimates on the continuants}\label{sect:continuants}
Given a floor function $s$ and a continued fraction 
 \[[a_0,a_1,a_2,\ldots ]_s\]
 we define, analogously to the real case, the classic sequences $(A_n)_{n\geq-1}$, $(B_n)_{n\geq -1}$ of \emph{continuants} given by the recurrences~\eqref{def-continuants}.
We have
\begin{equation}\label{convergent-shape} \frac{A_n}{B_n}=[a_0, \ldots, a_n]_s. \end{equation}
Notice that $A_n, B_n$ only depend on $a_0,\ldots, a_n$. 

Using matrices, we can write
$$
 \mathcal{A}_n =\begin{pmatrix} a_n &1 \\
   1 &0 \end{pmatrix},\quad\quad\quad
   \mathcal{B}_n =\begin{pmatrix} {A_{n}} &{A_{n-1}} \\
  {B_{n}} &{B}_{n-1} \end{pmatrix}, 
$$
for $n \in \mathbb N$. Moreover, if we define
\[\mathcal{B}_{-1} =\begin{pmatrix} 1\ &0 \\
  0\ & 1 \end{pmatrix},\]
 we have
\begin{equation}\mathcal{B}_n=\mathcal{B}_{n-1}\mathcal{A}_n
 =\mathcal{A}_0\mathcal{A}_1\ldots \mathcal{A}_n.
\label{eq:matriciale}
\end{equation}
This easily implies by induction the useful relation
\begin{equation} \label{eq:ABrel}
A_n B_{n-1}- B_n A_{n-1}=(-1)^{n+1}  \quad \mbox{for every } n\ge 0.
\end{equation}
\noindent Moreover, for every $k \ge 1$ we have 
\begin{equation} \label{eq:alpha}
    \alpha = [a_0, \ldots, a_{k-1}, \gamma]_s= \cfrac{\gamma A_{k-1}+A_{k-2}}{\gamma B_{k-1}+B_{k-2}}.
\end{equation}

We will also use the following properties.

\begin{lemma}\label{lem-An-Bn} Let $s$ be a $p$-adic floor function, $\alpha=[0,a_1,a_2,\ldots]_s$ and let $(A_n)_n, (B_n)_n$ be the sequences of continuants of $\alpha$.  Then:
\begin{enumerate}[label=(\roman*)]
\item\label{A-n-B_n-prod-aut} for every $n\geq 2$,  
$|A_n|_p=\prod_{i=2}^n|a_i|_p,\quad|B_n|_p=\prod_{i=1}^n|a_i|_p$.
\item\label{B_m-B_m+1}  for every $n\geq 0$, $|A_n|_p< |A_{n+1}|_p,\quad |B_n|_p<|B_{n+1}|_p,\quad
 |A_n|_p\leq |B_n|_p$.
 \end{enumerate}
 Suppose moreover that there exists a real number $M>0$ such that, for every $i$, $|a_i|_{\infty}\leq M$. Then 
 \begin{equation} \label{eq:bound_An_B_n_archimedean}
 \max(|A_n|_{\infty}, |B_n|_{\infty})\leq (M+1)^{n}.
 \end{equation}
\end{lemma}
\begin{proof}
Notice that
\[|A_0|_p=0, \, |A_1|_p=1, \, |A_2|_p=|a_2|_p, \,|A_3|_p=\max(|a_3|_p|a_2|_p,1)=|a_3|_p|a_2|_p\]  
and \[|B_0|_p=1, \, |B_1|_p=|a_1|_p, \, |B_2|_p=|a_1|_p|a_2|_p, \,|B_3|_p=|a_1|_p|a_2|_p|a_3|_p.\] 
Notice that, as $|a_i|_p> 1$, one can show by induction that, for every $n\geq 2$, $|A_{n-2}|_p<|a_{n}A_{n-1}|_p$ and similarly $|B_{n-2}|_p<|a_{n}B_{n-1}|_p$. This, together with the ultrametric inequality, proves
\ref{A-n-B_n-prod-aut}, which easily implies \ref{B_m-B_m+1}.
Finally, \eqref{eq:bound_An_B_n_archimedean} also easily follows by induction.
\end{proof}

The proof of the following useful lemma is obtained combining  \cite[Theorem 1]{Browkin1978} and \cite[Theorem 2]{Browkin1978}:
\begin{lemma}\label{lem:approx}  Let $s$ be a $p$-adic floor function, $\alpha=[a_0,a_1,a_2,\ldots]_s$ and let $(A_n)_n, (B_n)_n$ be the sequences of continuants of $\alpha$. Then for $n\geq 0$ 
\[
\left| B_n \alpha-  {A_n}\right |_p = \prod_{j=1}^{n+1} \frac{1}{|a_j|_p} =\frac{1}{ |B_{n+1}|_p}\leq \frac 1 {p^{n+1}}. 
\] In particular  \begin{equation}\label{eq-alpha-parti-quot}
        \left|\alpha- \frac{A_{n}}{B_{n}}\right|_p =  \frac{1}{\left|B_{n}B_{n+1}\right|_p}\leq \frac{1}{|B_n|_p^2}.\end{equation}
\end{lemma}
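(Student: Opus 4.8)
The strategy is the standard one, transposed to the non-archimedean setting, starting from the matrix/continuant identities already recorded in Section~\ref{sect:continuants}. First I would use \eqref{eq:alpha} with $k = n+1$ and $\gamma = \gamma_{n+1}$ (the tail of the algorithm, $\gamma_{n+1} = 1/(\gamma_n - a_n)$, which is defined precisely when the expansion does not terminate before step $n+1$) to write
\[
\alpha = \frac{\gamma_{n+1} A_n + A_{n-1}}{\gamma_{n+1} B_n + B_{n-1}}.
\]
Clearing denominators gives
\[
B_n\alpha - A_n = \frac{B_n A_{n-1} - A_n B_{n-1}}{\gamma_{n+1} B_n + B_{n-1}} = \frac{(-1)^n}{\gamma_{n+1} B_n + B_{n-1}},
\]
where the numerator is evaluated via \eqref{eq:ABrel}.

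Next I would estimate the $p$-adic absolute value of the denominator. Since $a_{n+1} = s(\gamma_{n+1})$ satisfies $|\gamma_{n+1} - a_{n+1}|_p < 1$ while $|a_{n+1}|_p \geq p > 1$, the ultrametric inequality (in its strict form) yields $|\gamma_{n+1}|_p = |a_{n+1}|_p$. Hence $|\gamma_{n+1} B_n|_p = |a_{n+1}|_p |B_n|_p$, and this strictly dominates $|B_{n-1}|_p$: for $n \geq 1$ this is Lemma~\ref{lem-An-Bn}\ref{B_m-B_m+1} together with $|a_{n+1}|_p > 1$, and for $n = 0$ it is immediate since $B_{-1} = 0$. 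Therefore, again by the strict ultrametric inequality,
\[
|\gamma_{n+1} B_n + B_{n-1}|_p = |a_{n+1}|_p |B_n|_p = |B_{n+1}|_p,
\]
the last equality being the content of Lemma~\ref{lem-An-Bn}\ref{A-n-B_n-prod-aut} (equivalently, the same strict-dominance argument applied to the recurrence $B_{n+1} = a_{n+1}B_n + B_{n-1}$). Combining, $|B_n\alpha - A_n|_p = 1/|B_{n+1}|_p = \prod_{j=1}^{n+1} |a_j|_p^{-1}$, and since each $|a_j|_p \geq p$ this is at most $p^{-(n+1)}$, which proves the first displayed identity and bound.

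For the ``in particular'' clause I would simply divide by $|B_n|_p$, which is nonzero ($|B_0|_p = 1$ and $|B_n|_p = \prod_{j=1}^n |a_j|_p \geq p^n$ for $n \geq 1$), obtaining
\[
\left|\alpha - \frac{A_n}{B_n}\right|_p = \frac{1}{|B_n|_p\,|B_{n+1}|_p} = \frac{1}{|B_n B_{n+1}|_p},
\]
and then the strict inequality $|B_n|_p < |B_{n+1}|_p$ from Lemma~\ref{lem-An-Bn}\ref{B_m-B_m+1} gives the final bound $\leq 1/|B_n|_p^2$. There is no genuine obstacle here: the only point demanding care is the repeated use of the \emph{strict} case of the non-archimedean triangle inequality and the low-index bookkeeping ($B_{-1} = 0$, $A_0 = 0$), which must be checked so that the estimates $|\gamma_{n+1}B_n|_p > |B_{n-1}|_p$ and $|a_{n+1}B_n|_p > |B_{n-1}|_p$ hold for all $n \geq 0$.
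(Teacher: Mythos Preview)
Your argument is correct. The paper does not actually give a proof of this lemma: it simply refers to \cite[Prop.\,3.6 (c)]{CapuanoMurruTerracini2022}, so there is nothing to compare against beyond noting that your computation is the standard one (express $\alpha$ via the tail $\gamma_{n+1}$, reduce to \eqref{eq:ABrel}, then use the strict ultrametric inequality together with $|\gamma_{n+1}|_p=|a_{n+1}|_p$). One cosmetic remark: in your final parenthetical you list ``$A_0=0$'' among the low-index facts to check, but Lemma~\ref{lem:approx} is stated for a general first partial quotient $a_0$, so $A_0=a_0$ need not vanish; fortunately your argument never uses $A_0$ (only $B_{-1}=0$ and the $B$-recurrence, which are independent of $a_0$), so this does not affect anything.
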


Notice that, by \ref{B_m-B_m+1} of Lemma \ref{lem-An-Bn}, for every $n\ge 0$, $|B_n|_p < |B_{n+1}|_p$, we have that the sequence  
$|\alpha -\frac{A_n}{B_n}|_p \rightarrow 0$
as $n$ tends to $\infty$; then, $\frac{A_n}{B_n}$ converges $p$-adically to $\alpha$.

The last lemma shows that, if the $n^{th}$ convergents of $\alpha$ and $\beta$ are the same, then the two numbers are $p$-adically closed. We have then the following result. 

\begin{lemma} \label{lemma2-nostro}Let $s$ be a $p$-adic floor function, $\alpha=[a_0,a_1,\ldots,a_n,a_{n+1}\ldots]_s$. Suppose that $\beta=[a_0,a_1,\ldots,a_n,b_{n+1},\ldots]_s$. 
    Then $\alpha$ and $\beta$ have the same first $n+1$ continuants $A_0,\ldots, A_n, B_0,\ldots,B_n$ and 
    $$ |\alpha-\beta|_p < \frac{1}{|B_n|_p^2}. $$
    
\end{lemma}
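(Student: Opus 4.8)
The plan is to prove the two assertions of Lemma~\ref{lemma2-nostro} separately, since the first is purely formal and the second is an immediate consequence of it together with Lemma~\ref{lem:approx}.

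First I would observe that the continuants $A_0,\ldots,A_n$ and $B_0,\ldots,B_n$ are defined by the recurrences \eqref{eq:AB}, which only involve the partial quotients $a_0,\ldots,a_n$ (this is exactly the remark following \eqref{convergent-shape}). Since $\alpha$ and $\beta$ share the first $n+1$ partial quotients $a_0,\ldots,a_n$ by hypothesis, an immediate induction on $k\le n$ shows $A_k^{(\alpha)}=A_k^{(\beta)}$ and $B_k^{(\alpha)}=B_k^{(\beta)}$; denote these common values $A_k,B_k$. This takes care of the first claim.

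For the inequality, the idea is to compare both $\alpha$ and $\beta$ with their common $n$-th convergent $A_n/B_n$ and use the ultrametric triangle inequality. By Lemma~\ref{lem:approx} applied to $\alpha$ we have $|\alpha-A_n/B_n|_p = 1/|B_nB_{n+1}^{(\alpha)}|_p \le 1/|B_n|_p^2$, where here I use part~\ref{B_m-B_m+1} of Lemma~\ref{lem-An-Bn} to bound $|B_{n+1}^{(\alpha)}|_p\ge |B_n|_p$ — actually $|B_{n+1}^{(\alpha)}|_p > |B_n|_p$, so the inequality is strict: $|\alpha-A_n/B_n|_p < 1/|B_n|_p^2$. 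The same reasoning applied to $\beta$ (whose continuants agree with those of $\alpha$ up to index $n$) gives $|\beta-A_n/B_n|_p < 1/|B_n|_p^2$. Then
\[
|\alpha-\beta|_p = \left| \left(\alpha - \tfrac{A_n}{B_n}\right) - \left(\beta - \tfrac{A_n}{B_n}\right)\right|_p \le \max\left(\left|\alpha-\tfrac{A_n}{B_n}\right|_p, \left|\beta-\tfrac{A_n}{B_n}\right|_p\right) < \frac{1}{|B_n|_p^2},
\]
which is the desired bound.

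There is really no hard step here: the only point requiring a little care is making sure the strict inequality is justified, which comes from $|B_{n+1}|_p > |B_n|_p$ (strictly) in Lemma~\ref{lem-An-Bn}\ref{B_m-B_m+1} rather than from Lemma~\ref{lem:approx} alone, where the stated bound $\le 1/|B_n|_p^2$ is already non-strict; alternatively one could simply state the conclusion with $\le$ if the strictness is not needed downstream. One should also note the edge case where $\alpha$ or $\beta$ has a finite expansion stopping exactly at index $n$ (i.e.\ $\gamma_n=a_n$): in that degenerate situation $\alpha=A_n/B_n$ and the corresponding term vanishes, so the inequality holds trivially (with the convention $1/|B_{n+1}|_p=0$); otherwise Lemma~\ref{lem:approx} applies verbatim.
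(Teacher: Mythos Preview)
Your proof is correct and follows essentially the same approach as the paper: both use \eqref{eq-alpha-parti-quot} (Lemma~\ref{lem:approx}) to bound $|\alpha-A_n/B_n|_p$ and $|\beta-A_n/B_n|_p$ strictly by $1/|B_n|_p^2$, then conclude via the ultrametric inequality. Your additional remarks on the source of the strict inequality and the finite-expansion edge case are more explicit than the paper's version but do not change the argument.
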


\begin{proof} 
By \eqref{convergent-shape} and \eqref{eq-alpha-parti-quot} we have
$$ \left | \alpha - \frac{A_n}{B_n} \right |_p< \frac{1}{|B_n|_p^2} \quad \mbox{and} \quad \left | \beta - \frac{A_n}{B_n} \right |_p< \frac{1}{|B_n|_p^2}. $$
By  adding and subtracting $\frac{A_n}{B_n}$ and applying the ultrametric inequality we obtain the conclusion.
\end{proof}

\subsubsection{Estimates for  Ruban's and Browkin's floor functions} \label{sub:Browkin_Ruban_floor}
The two main definitions of $p$-adic continued fraction algorithms are due to  Ruban \cite{Ruban1970} and Browkin \cite{Browkin1978}: in these cases the $p$-adic floor functions are given by 
$$ s(\alpha)=\sum_{n=k}^0 x_n p^n \in \QQ \ \ \mbox{where} \ \ \alpha=\sum_{n=k}^{+\infty} x_n p^n \in \QQ_p, $$
where $k\leq 0$ is an integer, the $x_n$ are representatives modulo $p$ in the interval $[0, p-1]$ for Ruban's and in the interval $[-(p-1)/2, (p-1)/2]$ for Browkin's  definition. Notice that, for these two floor functions, the partial quotients are bounded in absolute value by a constant $M$, which is $p$ in Ruban's case and $p/2$ in Browkin's. By \eqref{eq:bound_An_B_n_archimedean}, this implies that $  \max(|A_n|_{\infty}, |B_n|_{\infty})\leq (M+1)^{n}. $

\subsection{Periodic and palindromic continued fractions}
The following result is well known (see e.g. \cites[Th.\,III.1, p.\,40]{rockett1992continued}[Remark 6.1]{romeo2024continued}). Since it is widely used in our proofs, we reproduce here the proof for clarity in our case where $a_0=0$. We recall that, given a finite word $\omega$, we denote by $\overline{\omega}$ the infinite purely periodic word obtained repeating $\omega$. 
\begin{lemma}\label{lemma-polin-preperiodic}
    Let $s$ be a $p$-adic floor function and let \[\alpha=[0,a_1,\ldots,a_{w},\overline{a_{w+1},\ldots , a_\ell}]_s.\] Let $(A_n)_{n\geq -1},(B_n)_{n\geq -1}$ be the sequences of continuants of $\alpha$. Then $\alpha$ is a root of the polynomial \begin{equation}P(X)=a X^2-b X+c\end{equation}
    where \begin{equation}\label{coeff-poly}\begin{split} a& =B_{w -1}B_{\ell}- B_{w}B_{\ell-1}\\
b& =B_{w -1}A_{\ell}-B_{w}A_{\ell-1}+A_{w -1}B_{\ell}-A_{w} B_{\ell-1}\\
c &=A_{w-1}A_{\ell}-A_{w} A_{\ell-1}. \end{split}\end{equation}
    \end{lemma}
\begin{proof}
Let $\beta=[\overline{a_{w+1},\ldots , a_\ell}]_s$. Then
\[\alpha=\frac{A_{w}\beta+A_{w-1}}{B_{w}\beta+B_{w-1}}\]
but also 
\[\alpha=\frac{A_{\ell}\beta+A_{\ell-1}}{B_{\ell}\beta+B_{\ell-1}}.\]
Therefore $\alpha$ is a fixed point of the linear fractional transformation defined by the matrix
\[\begin{pmatrix} A_\ell & A_{\ell-1}\\ B_\ell & B_{\ell-1}\end{pmatrix}\begin{pmatrix} A_w & A_{w-1}\\ B_w & B_{w-1}\end{pmatrix}^{-1} =\pm \begin{pmatrix} A_\ell B_{w-1}-A_{\ell-1}B_w & -A_\ell A_{w-1}+A_{\ell-1}A_w\\
B_\ell B_{w-1}-B_{\ell-1}B_w & -B_\ell A_{w-1}+B_{\ell-1}A_w\end{pmatrix}. \]
The claim follows from \eqref{coeff-poly}.
\end{proof}

The following result establishes a relation between the continuants of a continued fraction and its palindrome.
\begin{lemma}
\label{lem:palindromeCF} Let $s$ be a $p$-adic floor function, let $\alpha= [a_0,a_1,\dots, a_n]_s$ and let $(A_n)_n, (B_n)_n$ be the sequences of continuants of $\alpha$. If $|a_0|_p>1$, then
     \begin{equation*}
        \frac{B_{n}}{B_{n-1}}=[a_n,\dots,a_0]_s.
    \end{equation*}
If $a_0=0$, we have
    \begin{equation*}
        \frac{B_{n-1}}{B_n}=[0,a_n,\dots,a_1]_s.
    \end{equation*}
\end{lemma}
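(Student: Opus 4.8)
The statement to prove is the palindrome-continuant identity in Lemma~\ref{lem:palindromeCF}. The plan is to exploit the matrix formalism from \eqref{eq:matriciale}. Recall that the product $\mathcal{A}_0\mathcal{A}_1\cdots\mathcal{A}_n$, with $\mathcal{A}_i=\begin{pmatrix} a_i & 1\\ 1 & 0\end{pmatrix}$, equals $\mathcal{B}_n=\begin{pmatrix} A_n & A_{n-1}\\ B_n & B_{n-1}\end{pmatrix}$. The key observation is that each $\mathcal{A}_i$ is a \emph{symmetric} matrix, so taking the transpose of the product reverses the order of the factors: $\transp{(\mathcal{A}_0\cdots\mathcal{A}_n)}=\mathcal{A}_n\cdots\mathcal{A}_0$. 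Therefore $\transp{\mathcal{B}_n}=\mathcal{A}_n\mathcal{A}_{n-1}\cdots\mathcal{A}_0$, which is precisely the continuant matrix associated to the reversed word $a_n,a_{n-1},\dots,a_0$.

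Writing $\begin{pmatrix} A_n' & A_{n-1}'\\ B_n' & B_{n-1}'\end{pmatrix}$ for the continuant matrix of $[a_n,\dots,a_0]_s$, the transpose identity $\transp{\mathcal{B}_n}=\begin{pmatrix} A_n & B_n\\ A_{n-1} & B_{n-1}\end{pmatrix}$ gives the entrywise equalities $A_n'=A_n$, $B_n'=B_n$ (the $(1,1)$ and $(2,1)$ entries coincide, the latter being unchanged since the first column of $\transp{\mathcal B_n}$ is $(A_n, A_{n-1})$ — I need to be careful here) — more precisely, comparing entries shows $A_n'=A_n$, $A_{n-1}'=B_n$, $B_n'=A_{n-1}$, $B_{n-1}'=B_{n-1}$. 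Hence by \eqref{convergent-shape} applied to the reversed word, $[a_n,\dots,a_0]_s = A_n'/B_n' = A_n/A_{n-1}$. That is not yet the claimed formula, so I need to identify $A_n/A_{n-1}$ with $B_n/B_{n-1}$ under the hypothesis $a_0=0$ or $|a_0|_p>1$; the cleaner route is to note that when $a_0=0$ one has $A_n = $ the continuant of $a_1,\dots,a_n$ shifted appropriately, and in fact $A_n/A_{n-1}$ for the word $a_0,\dots,a_n$ with $a_0=0$ equals $B_n/B_{n-1}$ for the word $a_1,\dots,a_n$. So for the $a_0=0$ case I would instead apply the transpose trick directly to the word $a_1,\dots,a_n$, obtaining $[a_n,\dots,a_1]_s = B_n/B_{n-1}$ (with continuants of $a_1\cdots a_n$), and then observe that prepending $a_0=0$ sends $B_n\mapsto B_{n-1}^{\mathrm{new}}$-type relations — concretely, if $C_k,D_k$ denote continuants of $a_1,\dots,a_n$, then the continuants of $0,a_1,\dots,a_n$ satisfy $A_n = D_{n-1}$ and $B_n = C_{n-1}$... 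I would sort out this bookkeeping by a short direct induction rather than chasing it abstractly.

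Concretely, here is the cleanest line of attack I would actually write up. First, prove the $|a_0|_p>1$ case: set $\mathcal{M}=\mathcal{A}_0\cdots\mathcal{A}_n$, note $\transp{\mathcal M}=\mathcal A_n\cdots\mathcal A_0$, read off from $\eqref{eq:matriciale}$ that the continuant matrix of $(a_n,\dots,a_0)$ is $\transp{\mathcal M}=\begin{pmatrix}A_n & B_n\\ A_{n-1} & B_{n-1}\end{pmatrix}$, so its top-left over bottom-left entry gives $[a_n,\dots,a_0]_s = A_n/A_{n-1}$; then use $[a_n,\dots,a_0]_s$ read the other way, namely that $A_n/A_{n-1}=[a_n,\dots,a_0]_s$ while also, since $\transp{\mathcal M}$ has bottom row $(A_{n-1},B_{n-1})$ and is itself a continuant matrix whose $B$-continuants are the bottom row, we get $B_n/B_{n-1}$ for the reversed word equals $B_n/B_{n-1}$ — wait, I must match which entries are the $A$'s and which the $B$'s in $\transp{\mathcal M}$: by \eqref{eq:matriciale} a continuant matrix has the form $\begin{pmatrix}\star & \star\\ \bullet & \bullet\end{pmatrix}$ with $A$'s on top and $B$'s on the bottom, and here the bottom row of $\transp{\mathcal M}$ is $(A_{n-1}, B_{n-1})$, forcing the reversed-word continuants to be $A_n^{\mathrm{rev}}=A_n$, $B_n^{\mathrm{rev}}=A_{n-1}$, giving $[a_n,\dots,a_0]_s=A_n/A_{n-1}$; finally $A_n/A_{n-1}=B_n/B_{n-1}$ because (with $a_0=0$, the case of interest) $A_n$ and $B_n$ of $(0,a_1,\dots,a_n)$ are literally the $B_{n-1}$ and $A_{n-1}$-type continuants of $(a_1,\dots,a_n)$... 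The bookkeeping is genuinely the only subtle point, so I would handle the two stated cases by: (a) applying the transpose identity to the word $a_1,\dots,a_n$ to get a reversed-word convergent formula, and (b) using the elementary relations $A_n(0,a_1,\dots,a_n)=B_{n-1}(a_1,\dots,a_n)$ and $B_n(0,a_1,\dots,a_n)=A_{n-1}(a_1,\dots,a_n)$ plus $A_{n-1}(a_1,\dots,a_n)=B_n(a_1,\dots,a_{n-1})$-type shifts, all provable in one or two lines by induction from \eqref{eq:AB}.

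The main obstacle is entirely notational: keeping straight which matrix entry is which continuant after transposition, and correctly tracking how prepending the digit $0$ (or a digit with $|a_0|_p>1$) permutes the roles of $A$ and $B$. There is no analytic difficulty — the symmetry $\transp{\mathcal A_i}=\mathcal A_i$ does all the real work, and everything else is finite linear algebra over the ring of continuants. I would therefore present the proof as: state $\transp{(\mathcal A_0\cdots\mathcal A_n)}=\mathcal A_n\cdots\mathcal A_0$, invoke \eqref{eq:matriciale} twice (once forwards, once for the reversed word), compare entries, and close with the short continuant identities for prepending a digit. A sanity check on a small example such as $[0,a_1,a_2]_s$ versus $[a_2,a_1]_s$ would be worth including to confirm the $A$/$B$ matching, but I would not belabor it in the final text.
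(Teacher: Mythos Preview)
The paper's proof is literally the one-line induction
\[
\frac{B_n}{B_{n-1}}=\frac{a_nB_{n-1}+B_{n-2}}{B_{n-1}}=a_n+\cfrac{1}{\,B_{n-1}/B_{n-2}\,},
\]
which together with $B_1/B_0=a_1$ gives $B_n/B_{n-1}=[a_n,\dots,a_1]$ immediately; the $a_0=0$ statement then follows by prepending a $0$ (i.e.\ taking the reciprocal). Your matrix-transpose route is a legitimate and conceptually pleasant alternative, but your write-up is tangled: applying the transpose to the \emph{full} product $\mathcal{A}_0\cdots\mathcal{A}_n$ yields $[a_n,\dots,a_0]=A_n/A_{n-1}$, which is not $B_n/B_{n-1}$, and most of your text is spent trying to reconcile the two via ad hoc continuant identities that you would in any case have to prove by the very induction you were trying to avoid. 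The clean matrix argument is to drop $\mathcal{A}_0$ from the start: when $a_0=0$ one has $\mathcal{A}_0=\bigl(\begin{smallmatrix}0&1\\1&0\end{smallmatrix}\bigr)=\mathcal{A}_0^{-1}$, so
\[
\mathcal{A}_1\cdots\mathcal{A}_n=\mathcal{A}_0^{-1}\mathcal{B}_n=\begin{pmatrix}B_n&B_{n-1}\\A_n&A_{n-1}\end{pmatrix},
\]
and transposing gives at once that the continuant matrix of $[a_n,\dots,a_1]$ has top-left entry $B_n$ and bottom-left entry $B_{n-1}$, i.e.\ $[a_n,\dots,a_1]=B_n/B_{n-1}$, with no bookkeeping whatsoever.

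As an aside: the reason you could not make the \emph{first} displayed formula of the lemma come out is that, as stated, it does not hold. Since the $B_k$'s do not depend on $a_0$, one always has $B_n/B_{n-1}=[a_n,\dots,a_1]$, whereas $[a_n,\dots,a_0]=A_n/A_{n-1}$; these differ already for $n=1$. Only the $a_0=0$ case is ever invoked in the paper, so nothing downstream is affected, but this explains the wall you hit rather than any defect in your method.
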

\begin{proof}
    This can be easily checked by induction using the definition of $B_n$ from \eqref{def-continuants}.
\end{proof}

\section{Our main results and some applications}\label{sec:main-res-examples}
In this section we give an explicit version of our main result Theorem \ref{main-thm-intro}. We state separately  the part dealing with property $\spadesuit$ (see Theorem \ref{thm:pAdicBugeaud}) and the part dealing with property $\clubsuit$ (see Theorem \ref{main-thm-fiori}).

\subsection{An explicit version of Theorem \ref{main-thm-intro}}
\begin{theorem}
\label{thm:pAdicBugeaud}
     Let $p$ be an odd prime and let $s$ be a $p$-adic floor function.
    Let $\alpha=[0,a_1,a_2,\dots]_s$, and let $A_n, B_n$ be its $n$-th continuants.  
    
    Suppose that the word $a_1 a_2 a_3 \cdots$ satisfies condition $\spadesuit$ with constant $c$ and that there exist reals $C_{\infty}, C_p>1$ such that, for every $n\geq 1$:
    \begin{enumerate}[label=(\roman*)]
        \item\label{bound-a_i-arch}  $\max(|A_n|_{\infty}, |B_n|_{\infty})\leq C_{\infty}^n$;
        \item\label{B_n^1/n-is-bounded} $|B_n|_p\leq C_p^n$;
        \item \label{Ass}
$|a_n|_p \geq p^k$ where $k=\left\lfloor \max\left( 3\frac {\log C_\infty}{\log p}, 2(3c+1)\frac {\log C_\infty}{\log p} \right)\right\rfloor + 1$.
    \end{enumerate}
    Then $\alpha$ is either rational, quadratic, or transcendental.
\end{theorem}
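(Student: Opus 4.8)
The plan is to assume $\alpha$ is algebraic of degree at least $3$ and derive a contradiction via the $p$-adic Subspace Theorem. By Definition~\ref{def-spade-club-with-constant}, there are finite words $U_n, V_n, W_n$ with $W_n U_n V_n U_n$ a prefix of $a_1 a_2 \ldots$, with $\max(|V_n|/|U_n|, |W_n|/|U_n|) \le c$, and $|U_n| \to \infty$. Write $r_n = |W_n|$, $s_n = |U_n|$, $t_n = |V_n|$, so that $q_n := r_n + s_n + t_n$ and $\ell_n := r_n + 2 s_n + t_n$ are, respectively, the lengths of $W_n U_n V_n$ and $W_n U_n V_n U_n$. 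The key idea (as in Bugeaud) is to introduce the quadratic approximant $\alpha_n := [0, W_n \overline{U_n V_n}]_s$, i.e. the continued fraction that agrees with $\alpha$ up to the prefix $W_n U_n V_n$ and is then purely periodic with period $U_n V_n$.

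First I would record the arithmetic data of $\alpha_n$. By Lemma~\ref{lemma-polin-preperiodic}, $\alpha_n$ is a root of a quadratic $P_n(X) = a^{(n)} X^2 - b^{(n)} X + c^{(n)}$ whose coefficients are bilinear expressions in the continuants $A_{r_n-1}, A_{r_n}, A_{q_n-1}, A_{q_n}$ and the analogous $B$'s — which, crucially, are also continuants of $\alpha$ itself, since $\alpha$ and $\alpha_n$ share the prefix $W_n U_n V_n$. Using \ref{bound-a_i-arch} these coefficients are bounded archimedeanly by roughly $C_\infty^{2 q_n}$, so $H(\alpha_n) \ll C_\infty^{2 q_n}$. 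On the other hand, since $W_n U_n V_n U_n$ is a prefix of both $\alpha$ and the periodic word, $\alpha$ and $\alpha_n$ share the first $\ell_n + 1$ continuants; by Lemma~\ref{lemma2-nostro} and Lemma~\ref{lem-An-Bn}\ref{A-n-B_n-prod-aut}, $|\alpha - \alpha_n|_p < |B_{\ell_n}|_p^{-2} \le (\prod_{i=1}^{\ell_n}|a_i|_p)^{-2} \le p^{-2 k \ell_n}$. Because $\ell_n \ge q_n$ and, by condition~\ref{Ass}, $k$ was chosen precisely so that $2 k \ge 2(3c+1)\frac{\log C_\infty}{\log p} \ge \frac{q_n}{\ell_n}\cdot \frac{2\log(C_\infty^{2q_n})}{q_n \log p}$ (using $\ell_n/q_n$ bounded below in terms of $c$), the $p$-adic approximation $|\alpha - \alpha_n|_p$ beats any fixed power of $H(\alpha_n)^{-1}$ that the Subspace Theorem would require. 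I would also need the archimedean estimate $|\alpha_n|_\infty$ and $|B_{q_n}\alpha - A_{q_n}|_p$ controlled — the latter via Lemma~\ref{lem:approx} — to feed all the relevant places into the theorem.

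Next comes the Subspace Theorem machinery. Assuming $\alpha \in \overline{\mathbb{Q}} \cap \mathbb{Q}_p$ of degree $d \ge 3$, I would set up, over a suitable number field containing $\alpha$, a system of linear forms in (say) four variables at the places $\infty$ and $p$: at $p$ one uses forms detecting that the integer vector $\xx_n = (A_{q_n}, A_{q_n-1}, B_{q_n}, B_{q_n-1})$ — or the combination built from the quadratic relation $P_n$ — makes $\alpha$ look nearly like a root of $P_n$, exploiting both $|\alpha - \alpha_n|_p$ small and the defining relations \eqref{eq:ABrel}, \eqref{eq:alpha}; at the archimedean places one uses the trivial forms (coordinate projections). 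The product of the evaluated linear forms is then bounded above by a negative power of $\|\xx_n\|$, with the gain coming from the $p$-adic smallness just established; the numerology of condition~\ref{Ass} is exactly what makes this exponent strictly negative. Schlickewei's $p$-adic Subspace Theorem (Theorem~\ref{thm:subspace-appl}) then forces the $\xx_n$ to lie in finitely many proper subspaces, so infinitely many of them satisfy one fixed nontrivial linear relation $\lambda_1 A_{q_n} + \lambda_2 A_{q_n-1} + \lambda_3 B_{q_n} + \lambda_4 B_{q_n-1} = 0$.

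Finally I would convert this relation into a relation among $1, \alpha, \alpha^2$. Dividing by $B_{q_n}$ and letting $n \to \infty$, using $A_{q_n}/B_{q_n} \to \alpha$ and $A_{q_n-1}/B_{q_n}, B_{q_n-1}/B_{q_n}$ having convergent (or controllable) behaviour — here one passes to a further subsequence and uses \eqref{eq:ABrel} to pin down $A_{q_n-1}B_{q_n} - A_{q_n}B_{q_n-1} = \pm 1$ together with boundedness of ratios like $B_{q_n-1}/B_{q_n}$ — one obtains, after a short case analysis on which $\lambda_i$ vanish, either that $\alpha$ is rational or quadratic (contradicting $d \ge 3$) or a genuine quadratic equation over $\overline{\mathbb{Q}}$ satisfied by $\alpha$, again contradicting $d\ge 3$. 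I expect the main obstacle to be the bookkeeping in this last "brute force case by case" step: making sure that every degenerate sub-case of the subspace relation genuinely collapses to $\deg \alpha \le 2$, and that the auxiliary ratios of continuants one divides by do not themselves vanish along the chosen subsequence; a secondary technical point is verifying all the height and place-wise norm inequalities needed to apply Theorem~\ref{thm:subspace-appl} with a product of norms that is provably $< \|\xx_n\|^{-\varepsilon}$, which is precisely where the explicit value of $k$ in \ref{Ass} must be used without slack.
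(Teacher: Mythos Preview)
Your high-level strategy (quadratic periodic approximants $\alpha_n$, then the $p$-adic Subspace Theorem) matches the paper, but the concrete Subspace setup you describe has a genuine gap. You propose applying the theorem to $\xx_n=(A_{q_n},A_{q_n-1},B_{q_n},B_{q_n-1})$, i.e.\ continuants only at the index $q_n=\ell_n=|W_nU_nV_n|$, and then passing to the limit in a relation $\lambda_1 A_{\ell_n}+\lambda_2 A_{\ell_n-1}+\lambda_3 B_{\ell_n}+\lambda_4 B_{\ell_n-1}=0$. This does not produce a quadratic relation in $\alpha$: after dividing by $B_{\ell_n}$ you need $\lim B_{\ell_n-1}/B_{\ell_n}$, and by Lemma~\ref{lem:palindromeCF} this is $[0,a_{\ell_n},\ldots,a_1]_s$, which neither converges nor equals $\alpha$ in general. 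Identifying this limit as some auxiliary $\beta$ and proving $\beta$ is irrational is itself nontrivial (the paper does it via the uniqueness of the expansion for the fixed floor function $s$), and even then a \emph{single} linear relation among $1,\alpha,\beta$ is useless; one needs a second relation among $1,\alpha,1/\beta$, hence two further Subspace applications.

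More fundamentally, the vector the paper feeds into the Subspace Theorem is not your $\xx_n$ but the \emph{coefficient vector} $(z_{n,1},z_{n,2},z_{n,3},z_{n,4})$ of the quadratic $P_n$ from Lemma~\ref{lemma-polin-preperiodic}; these are bilinear in continuants at indices $w_n-1,w_n$ \emph{and} $\ell_n-1,\ell_n$, and the crucial small $p$-adic form is $L_{1,p}=\alpha^2X_1-\alpha(X_2+X_3)+X_4=P_n(\alpha)$. Your vector drops all dependence on $w_n$ and therefore loses the link to $\alpha^2$. After this first application the paper still splits into two cases according to whether $(w_n)$ is bounded; the unbounded case requires arranging $a_{w_n}\neq a_{\ell_n}$ and then using property~(iv) of the floor function to force $a_{w_n}=a_{\ell_n}$ from $|Q_n-1|_p\to 0$, a contradiction---this is where the floor function hypothesis is genuinely used, and it is entirely absent from your sketch. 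What you flag as ``the main obstacle'' (the brute-force case analysis) is thus not a bookkeeping afterthought but the heart of the argument, involving three additional Subspace applications and a non-obvious use of the $p$-adic floor function axioms.
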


\begin{remark}
\label{rem:rationalSpades}
Unlike the archimedean case, it can actually occur that $\alpha$ is rational in our statement. 

For example, let $s$ be the floor function defined by Ruban (see Section \ref{sub:Browkin_Ruban_floor}).
Then, as remarked in~\cite[§3]{Laohakosol1985},  one has \[-p=\left[0,\overline{\frac{p-1}{p}+(p-1)}\right]_s.\] 
So, the associated word satisfies condition $\spadesuit$ with constant $c=0$ and the conditions of Theorem \ref{thm:pAdicBugeaud} are satisfied (with $C_{\infty}=2$, $C_p=p$ and $k=1$).
\end{remark}

\begin{theorem}\label{main-thm-fiori}
     Let $p$ be an odd prime. Let $s$ be a $p$-adic floor function.
    Let $\alpha=[0,a_1,a_2,\dots]_s$, and let $A_n, B_n$ be its $n$-th continuants. 
    
    Suppose that the word $a_1 a_2 a_3 \cdots$ satisfies condition $\clubsuit$ with constant $c$ and that there exist reals $C_{\infty}, C_p>1$ such that, for every $n\geq 1$:
    \begin{enumerate}[label=(\roman*)]
    \item\label{archim-A_n-e-B_n^1/n-is-bounded-clubs}$\max(|A_n|_{\infty}, |B_n|_{\infty})\leq C_{\infty}^n$;
        \item\label{B_n^1/n-is-bounded-clubs}$|B_n|_p\leq C_p^n$;
\item \label{Ass-clubs} for every $n$,
$|a_n|_p \geq p^k$ where

\[k=\begin{cases}
 \left \lfloor  \frac{\log C_\infty}{\log p}\right \rfloor +1 \quad &\text{if $c= 0$,}\\[2ex]
   \left \lfloor (4+6c) \frac{\log C_\infty}{\log p}\right \rfloor +1 \quad &\text{if $c\neq 0$.}
\end{cases}
\]

    \end{enumerate}
    Then $\alpha$ is either rational, quadratic, or transcendental.
\end{theorem}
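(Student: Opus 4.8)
The plan is to adapt the strategy of Bugeaud's Theorem~\ref{thm-bugeaud} (and of the Adamczewski--Bugeaud palindromic argument) to the $p$-adic setting, using the $p$-adic Subspace Theorem (Theorem~\ref{thm:subspace-appl}) in place of Schmidt's. Assume that $\alpha$ is algebraic; the goal is to show $d:=[\QQ(\alpha):\QQ]\le 2$. Fix an embedding $\Qalg\hookrightarrow\QQ_p$, put $K=\QQ(\alpha)$, and fix sequences $(W_n)_n,(U_n)_n,(V_n)_n$ witnessing condition $\clubsuit$ for the word $a_1a_2\cdots$ (Definition~\ref{def-spade-club-with-constant}). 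Write $r_n=|W_n|$, $s_n=|U_n|$, $t_n=|V_n|$, $m_n=r_n+s_n+t_n$ and $\ell_n=m_n+s_n$; by~\ref{it3}, after passing to a subsequence we may assume $s_n\to\infty$, and by~\ref{it2} we have $r_n,t_n\le c\,s_n$, so that $s_n\le\ell_n\le(2+2c)\,s_n$ and $m_n,s_n,\ell_n$ grow at comparable rates.

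\emph{Good $p$-adic approximants through the palindrome.} For each $n$ the truncation $\alpha_n:=[0,a_1,\dots,a_{\ell_n}]_s=A_{\ell_n}/B_{\ell_n}\in\QQ$ (where $A_j,B_j$ are the continuants of $\alpha$) satisfies, by Lemma~\ref{lem:approx}, $|\alpha-\alpha_n|_p=|B_{\ell_n}B_{\ell_n+1}|_p^{-1}$, while part~\ref{A-n-B_n-prod-aut} of Lemma~\ref{lem-An-Bn} together with hypothesis~\ref{Ass-clubs} gives $|B_{\ell_n}|_p=\prod_{i=1}^{\ell_n}|a_i|_p\ge p^{k\ell_n}$; thus $|\alpha-\alpha_n|_p\le p^{-k(\ell_n+1)}$, an extremely good $p$-adic approximation. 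The decisive feature of condition $\clubsuit$ is that this particular convergent of $\alpha$ factors through much smaller data: since the block $\widetilde{U_n}$ occupying positions $m_n+1,\dots,\ell_n$ is the reversal of the block $U_n$ occupying positions $r_n+1,\dots,r_n+s_n$ and every matrix $\mathcal{A}_j$ is symmetric, the matrix product over $\widetilde{U_n}$ is the transpose of the one over $U_n$; feeding this into~\eqref{eq:matriciale} (with $\mathcal{B}_{m_n}$ split according to $W_n$, $U_n$, $V_n$) expresses $A_{\ell_n}$ and $B_{\ell_n}$ as explicit \emph{bilinear} forms in the continuants of the finite words $W_n$, $U_n$, $V_n$. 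When $c=0$, i.e.\ $W_n=V_n=\emptyset$, these collapse to $A_{\ell_n}=A_{s_n}^2+A_{s_n-1}^2$ and $B_{\ell_n}=A_{s_n}B_{s_n}+A_{s_n-1}B_{s_n-1}$, which is why $k$ may be taken smaller in that case. Using hypothesis~\ref{archim-A_n-e-B_n^1/n-is-bounded-clubs} together with relation~\eqref{eq:ABrel} (which lets one express the continuants of the sub-words $W_n,U_n,V_n$ through those of $\alpha$), all these continuant building blocks have archimedean absolute value at most $C_\infty^{O(\ell_n)}$, and by hypothesis~\ref{B_n^1/n-is-bounded-clubs} their $p$-adic absolute values are at most $C_p^{O(\ell_n)}$.

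\emph{Applying the $p$-adic Subspace Theorem.} Collect the relevant continuants of $\alpha$ into a sequence of vectors $\mathbf{x}_n\in K^N$ with $N$ small (three or four), and choose linear forms $L_{1,v},\dots,L_{N,v}$ with coefficients in $K$, for $v$ in $S=\{\infty,\mathfrak{p}\}$, so that at the finite place one of them is an ``approximation form'' taking at $\mathbf{x}_n$ the value $B_{\ell_n}\alpha-A_{\ell_n}$ up to a power of $p$ --- its $\mathfrak{p}$-adic size being then governed by $|\alpha-\alpha_n|_p\le p^{-k(\ell_n+1)}$ --- while the remaining forms are essentially coordinate forms. Balancing the extreme $p$-adic smallness of the approximation form against the archimedean size $C_\infty^{O(\ell_n)}$ of the entries of $\mathbf{x}_n$ (the $p$-adic normalizations being arranged so that the denominators of the continuants do not enter the final balance, which is why the resulting $k$ depends only on $C_\infty$ and $c$), one checks that for all large $n$
\[
\prod_{v\in S}\ \prod_{i=1}^{N}\,|L_{i,v}(\mathbf{x}_n)|_v\ \le\ H(\mathbf{x}_n)^{-\delta}
\]
for some fixed $\delta>0$; this is precisely the inequality that the lower bound $k>(4+6c)\,\log C_\infty/\log p$ (respectively $k>\log C_\infty/\log p$ when $c=0$) is designed to guarantee. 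The $p$-adic Subspace Theorem then confines the $\mathbf{x}_n$ to finitely many proper subspaces of $K^N$.

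\emph{Extracting a quadratic relation, and the main difficulty.} Passing to a further subsequence, all $\mathbf{x}_n$ lie in one fixed proper subspace, giving a non-trivial linear relation among their coordinates (the degenerate case, in which infinitely many $\mathbf{x}_n$ are proportional to a single vector, is treated separately and leads to the same conclusion). Unwinding the coordinates of $\mathbf{x}_n$ in terms of the continuants of $\alpha$, using $\alpha_n\to\alpha$ $p$-adically together with the recurrences~\eqref{eq:AB}, the relation~\eqref{eq:ABrel}, and a finite case analysis, one turns this into a non-trivial identity $e_0+e_1\alpha+e_2\alpha^2=0$ over $K$, whence $d\le2$; note that $d=1$ is not excluded here, unlike in the archimedean setting (cf.\ Remark~\ref{rem:rationalSpades}). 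I expect the main obstacles to be the two middle steps: deriving the palindrome-induced factorization of $A_{\ell_n},B_{\ell_n}$ through the lower continuants with height bounds sharp enough that a positive gap $\delta$ survives in the Subspace Theorem inequality --- this is exactly what fixes the explicit value of $k$, and the dichotomy $c=0$ versus $c\ne0$ reflects the presence or absence of the blocks $W_n,V_n$ --- and the final combinatorial bookkeeping that converts the relation among continuants handed back by the theorem into a relation among $1,\alpha,\alpha^2$.
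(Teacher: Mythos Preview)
Your high-level plan is on the right track---assume $\alpha$ algebraic of degree $\ge3$, use the palindromic structure to produce small $p$-adic values of well-chosen linear forms, apply Theorem~\ref{thm:subspace-appl}, and extract a quadratic relation---but there are two concrete gaps relative to what the paper actually does.

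First, the paper does not feed the continuants $(A_{\ell_n},B_{\ell_n})$ of $\alpha$ directly into the Subspace Theorem in the generic case $w_n\neq 0$. The key construction you are missing is that of the auxiliary rationals $P_n/Q_n=[0,W_nU_nV_n\widetilde{U_n}\widetilde{W_n}]_s$ and $P'_n/Q'_n=[0,W_nU_nV_n\widetilde{U_n}\widetilde{W'_n}]_s$, obtained by \emph{appending the mirror of $W_n$} after the shared prefix $W_nU_nV_n\widetilde{U_n}$. The point is that then, by Lemma~\ref{lem:palindromeCF}, one also has $Q'_n/Q_n=[0,W_nU_n\widetilde{V_n}\widetilde{U_n}\widetilde{W_n}]_s$, so that \emph{three} independent approximation relations become available: $|Q_n\alpha-P_n|_p$, $|Q'_n\alpha-P'_n|_p$ and $|Q_n\alpha-Q'_n|_p$ are all small, and one can further bound $|\alpha^2 Q_n-\alpha(Q'_n+P_n)+P'_n|_p$. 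Your single ``approximation form'' $B_{\ell_n}\alpha-A_{\ell_n}$ does not provide enough smallness by itself, and your bilinear factorization of $A_{\ell_n},B_{\ell_n}$ through the sub-word continuants, while correct in spirit (the formulas you wrote for $c=0$ are swapped, incidentally), does not replace this construction.

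Second, a single application of the Subspace Theorem does not finish the argument. In the paper's Case~(3) the first application, with the four forms above and $\mathbf{z}_n=(Q_n,Q'_n,P_n,P'_n)$, gives a relation $t_1Q_n+t_2Q'_n+t_3P_n+t_4P'_n=0$; dividing by $Q_n$ and letting $n\to\infty$ only yields $t_1+(t_2+t_3)\alpha+t_4\alpha^2=0$, which---since $\alpha$ is assumed non-quadratic---forces $t_1=t_4=0$, $t_2=-t_3$, i.e.\ merely $Q'_n=P_n$. A \emph{second} application with three forms and $\mathbf{z}_n=(Q_n,P_n,P'_n)$ is then needed to reach the contradiction. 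Your ``finite case analysis'' does not anticipate this two-step structure. The paper also splits into three cases according to whether $\{n:w_n=v_n=0\}$ or $\{n:w_n=0\}$ is infinite; Case~(1) (which covers $c=0$ and uses the symmetry $A_{2u_n}=B_{2u_n-1}$ directly) and Case~(2) (where one derives $A_{\ell_n+u_n}=B_{\ell_n+u_n-1}$ and contradicts the non-palindromicity of $U_nV_n\widetilde{U_n}$ via uniqueness of the expansion for a fixed floor function) each need their own linear forms. This case split, and the role of the floor function in Case~(2), are absent from your outline.
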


\begin{remark} \label{rem:finitealphabet}
Notice that:
\begin{itemize}
\item    For both Theorems~\ref{thm:pAdicBugeaud} and \ref{main-thm-fiori}, condition~\ref{B_n^1/n-is-bounded} implies the same bound also on $|A_n|_p$, since $|A_n|_p \le |B_n|_p$ by Lemma~\ref{lem-An-Bn}. However, this does not generally hold for the Archimedean absolute value, so we need to impose both bounds in condition~\ref{bound-a_i-arch}.
\item Moreover, if there exists $T>0$ such that $|a_i|_\infty\leq T$ for every $i$, as it happens in most definitions of $p$-adic continued fractions, condition~\ref{bound-a_i-arch} from Theorems \ref{thm:pAdicBugeaud} and \ref{main-thm-fiori} always holds true. Indeed, by Lemma~\ref{lem-An-Bn}, one can take $C_\infty= T+1.$
\end{itemize}
\end{remark}

\subsection{Some corollaries of our results}

\subsubsection{Ruban's and Browkin's floor functions: proof of Corollary \ref{cor:browRub}}
\label{subsec:applicationBrRub}
Since the word $a_1 a_2 a_3\cdots$ starts with arbitrarily long repetitions (resp.\ palindromes), it satisfies condition~$\spadesuit$ (resp.\ $\clubsuit$) with $c=0$. It is then enough to check that the hypotheses of Theorem~\ref{thm:pAdicBugeaud} (resp.\ Theorem~\ref{main-thm-fiori}) are satisfied. Note that infinite continued fraction expansions are always irrational in Browkin's case~\cite[Thm.\,3]{Browkin1978}, so the irrationality of $\alpha$ needs to be assumed only in Ruban's case.

 As observed in Section~\ref{sub:Browkin_Ruban_floor}, hypothesis~\ref{bound-a_i-arch} of Theorem~\ref{thm:pAdicBugeaud} (resp.\ Theorem~\ref{main-thm-fiori}) is automatically satisfied by setting $C_\infty=p+1$ when $s$ is Ruban's floor function, and $C_\infty=p/2 +1$ when $s$ is Browkin's floor function, applying Lemma \ref{lem-An-Bn}. Hypothesis~\ref{B_n^1/n-is-bounded} of Theorem~\ref{thm:pAdicBugeaud} (resp.\ Theorem~\ref{main-thm-fiori}) holds by assumption. Finally, hypothesis~\ref{Ass-clubs} of Theorem~\ref{thm:pAdicBugeaud} (resp.\ Theorem~\ref{main-thm-fiori}) can be easily checked, case by case, by setting $c=0$ and using the respective values of $C_\infty$. This concludes the proof of Corollary \ref{cor:browRub}.

\subsubsection{Words on a finite alphabet}\label{applfinitealphabet}

For words on a finite alphabet, hypotheses~\ref{bound-a_i-arch} and \ref{B_n^1/n-is-bounded} of Theorems~\ref{thm:pAdicBugeaud} and~\ref{main-thm-fiori} are always fulfilled. Therefore, our results can be specialized as follows.
\begin{corollary}
\label{cor:fin-alph}
   Let $p$ be an odd prime. Let $s$ be a $p$-adic floor function, and $\alpha=[0,a_1,a_2,\dots]_s$.   Suppose that the word $\omega = a_1 a_2 a_3 \cdots$ consists of letters from a finite alphabet and set $T=\max(|a_n|_\infty)_n$.
   Moreover, suppose that $\omega$ satisfies one of the following:
   \begin{enumerate}[label=(\arabic*)]
       \item \label{itm:finspade} condition $\spadesuit$ with constant $c$ and
        \begin{equation}
     \label{eqn:AssFiniteAlph1}
       (T+1)^{\max(3, 6c+2)}<p,
 \end{equation}
 \item \label{itm:finclub} condition $\clubsuit$ with constant $c$ and
 \begin{equation}
     \label{eqn:AssFiniteAlph2}
       \begin{cases}
           T<p-1 \quad&\text{if $c=0$,}\\
           (T+1)^{4+6c}<p\quad & \text{otherwise.} 
       \end{cases}
 \end{equation}
   \end{enumerate}

    Then $\alpha$ is either rational, quadratic, or transcendental.
\end{corollary}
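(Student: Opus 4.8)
The plan is to deduce Corollary~\ref{cor:fin-alph} directly from Theorems~\ref{thm:pAdicBugeaud} and~\ref{main-thm-fiori} by supplying the hypotheses they require. First I would observe that a word on a finite alphabet automatically satisfies condition~\ref{bound-a_i-arch}: setting $T = \max_n |a_n|_\infty < \infty$, Lemma~\ref{lem-An-Bn} (more precisely~\eqref{eq:bound_An_B_n_archimedean}) gives $\max(|A_n|_\infty, |B_n|_\infty) \le (T+1)^n$, so we may take $C_\infty = T+1$ (as noted in Remark~\ref{rem:finitealphabet}, one can even take the slightly sharper $C_\infty = (T+\sqrt{T^2+4})/2 \le T+1$, but $T+1$ suffices). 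Likewise, condition~\ref{B_n^1/n-is-bounded} holds for free: by Lemma~\ref{lem-An-Bn}\ref{A-n-B_n-prod-aut}, $|B_n|_p = \prod_{i=1}^n |a_i|_p$, and since the alphabet is finite the values $|a_i|_p$ are bounded, say by $C_p$, giving $|B_n|_p \le C_p^n$. Thus only the lower bound on $|a_n|_p$, i.e.\ condition~\ref{Ass}/\ref{Ass-clubs}, needs to be checked.

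Next I would translate that remaining condition into the stated hypotheses~\eqref{eqn:AssFiniteAlph1} and~\eqref{eqn:AssFiniteAlph2}. In case~\ref{itm:finspade}, Theorem~\ref{thm:pAdicBugeaud} requires $|a_n|_p \ge p^k$ with $k = \lfloor \max(3\log C_\infty/\log p,\; 2(3c+1)\log C_\infty/\log p)\rfloor + 1$. Since every partial quotient of a $p$-adic continued fraction satisfies $|a_n|_p > 1$, i.e.\ $|a_n|_p \ge p$, it is enough to ensure $k = 1$, which happens exactly when $\max(3, 6c+2)\log C_\infty/\log p < 1$, that is $C_\infty^{\max(3,6c+2)} < p$. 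With $C_\infty = T+1$ this is precisely~\eqref{eqn:AssFiniteAlph1}. The $\clubsuit$ case~\ref{itm:finclub} is analogous using Theorem~\ref{main-thm-fiori}: for $c = 0$ one needs $\lfloor \log C_\infty/\log p\rfloor + 1 = 1$, i.e.\ $C_\infty < p$; with $C_\infty = T+1$ (or the sharper value, which is why the clean form $T < p-1$ appears rather than $T+1 < p$) this gives the first line of~\eqref{eqn:AssFiniteAlph2}, while for $c \ne 0$ one needs $(4+6c)\log C_\infty/\log p < 1$, i.e.\ $C_\infty^{4+6c} < p$, giving the second line. In all cases $|a_n|_p \ge p = p^k$, so hypothesis~\ref{Ass}/\ref{Ass-clubs} is met, and the conclusion (that $\alpha$ is rational, quadratic, or transcendental) follows immediately.

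I do not expect any serious obstacle here: this is a bookkeeping corollary, and the only mild subtlety is matching the slightly different numerical thresholds ($T < p-1$ versus $(T+1)^{\cdots} < p$), which comes from whether one uses the crude bound $C_\infty = T+1$ or the sharper $C_\infty = (T+\sqrt{T^2+4})/2$ from Remark~\ref{rem:finitealphabet}; in the $c=0$ case the sharper bound is available and convenient, elsewhere $T+1$ is used. One should also keep in mind, as flagged in Remark~\ref{rem:rationalSpades} and Remark~\ref{rem:finitealphabet}, that the rational case genuinely can occur $p$-adically, so the trichotomy cannot be improved to a dichotomy; but that is already built into the cited theorems and requires no further argument.
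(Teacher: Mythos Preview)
Your argument is correct and follows exactly the paper's own proof: verify hypotheses~\ref{bound-a_i-arch} and~\ref{B_n^1/n-is-bounded} using the finite alphabet (via Lemma~\ref{lem-An-Bn} with $C_\infty=T+1$ and $C_p=\max_i|a_i|_p$), then observe that~\eqref{eqn:AssFiniteAlph1} and~\eqref{eqn:AssFiniteAlph2} force $k=1$, which is automatic since $|a_n|_p\ge p$. One small remark: your aside about needing the sharper bound $(T+\sqrt{T^2+4})/2$ to get ``$T<p-1$'' is unnecessary, since $T<p-1$ and $T+1<p$ are the same inequality; the crude $C_\infty=T+1$ already gives it.
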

\begin{proof}
It is enough to check that the conditions~\ref{bound-a_i-arch}, \ref{B_n^1/n-is-bounded}, and~\ref{Ass} in Theorem~\ref{thm:pAdicBugeaud} (resp.\, Theorem~\ref{main-thm-fiori}) are satisfied. By Remark~\ref{rem:finitealphabet}, ~\ref{bound-a_i-arch} is satisfied taking $C_{\infty}=T+1$. From part~\ref{A-n-B_n-prod-aut} of Lemma~\ref{lem-An-Bn}, one sees that \ref{B_n^1/n-is-bounded} holds for $C_p=\max(|a_i|_p)_i$. Finally,~\eqref{eqn:AssFiniteAlph1} (resp.\ \eqref{eqn:AssFiniteAlph2}) implies that $k= 1$ in ~\ref{Ass}, which is then automatically satisfied since $a_i\in \mathrm{Im}(s)\setminus\{0\}$.
\end{proof}

The following result specializes Corollary~\ref{cor:fin-alph} to low complexity words.
\begin{corollary}\label{corollario-automatiche}
    Let $p$ be an odd prime. Let $a,b\in\ZZ[1/p]$ such that: 
    \begin{enumerate}[label=(\roman*)]
        \item\label{itm:TM1} $|a-b|_p\geq 1$;
        \item\label{itm:TM2} $\min\left(|a|_p, |b|_p\right) \geq p$.
    \end{enumerate}
    Then, there exists a $p$-adic floor function $s$ such that $\{a,b\}\subset \mathrm{Im}(s)$. 

Let $s$ be such a floor function. Suppose moreover that 
    $\omega=a_1a_2a_3 \cdots$ is a word on the alphabet $\{a,b\}$ whose complexity function satisfies $p_{\omega}(n)\leq nC$ for all $n\geq 1$ where $C\geq 0$ is a real number such that:
    \begin{enumerate}[label=(\roman*)]
        \setcounter{enumi}{2}
        \item \label{itm:sublist} $\left(\max\left(|a|_\infty, |b|_\infty\right)+1\right)^{18C+8}<p$.
    \end{enumerate}
    Then $[0,a_1,a_2,\dots]_s$ is either rational, quadratic, or transcendental.
\end{corollary}
\begin{proof}
Notice that, in general, the construction of a $p$-adic floor function $s$ amounts to choosing a representative in $\ZZ[1/p]$ from any coset of $\mathbb{Q}_p$ modulo $p\mathbb{Z}_p$. Then the image of $s$ on the given class is the chosen element. Conditions \ref{itm:TM1} and \ref{itm:TM2} ensure that the cosets $a+p\ZZ_p$ and $b+p\ZZ_p$ are distinct and non zero. Hence one can choose any floor function $s$ mapping such cosets to $a$ and $b$ respectively (notice that there are infinitely many possible choices). 

We fix such an $s$. If $\omega$ is ultimately periodic, then, by classical results on $p$-adic continued fractions (see Lemma~\ref{lemma-polin-preperiodic}), the number $[0,a_1,a_2,\dots]_s$ is either rational or quadratic. Otherwise, by Theorem~\ref{bugeaud-complexity-picche}, $\omega$ satisfies condition $\spadesuit$ with constant $c=3C+1$. Therefore the hypotheses of Corollary~\ref{cor:fin-alph} -- in particular~\ref{itm:finspade}, which in this case is equivalent to~\ref{itm:sublist} -- are satisfied, and the conclusion follows immediately.
\end{proof}
Finally, Corollary~\ref{cor:largep} is a special case of the above result. 
\begin{proof}[Proof of Corollary~\ref{cor:largep}]
    For $p$ large enough, $n,m,n-m$ are coprime to $p$. This is equivalent to saying that $n/p$ and $m/p$ satisfy hypotheses~\ref{itm:TM1} and~\ref{itm:TM2} of Corollary~\ref{corollario-automatiche}. Since also~\ref{itm:sublist} must be satisfied up to taking a larger $p$, the conclusion follows directly from Corollary~\ref{corollario-automatiche}.
\end{proof}

\begin{remark} Notice that condition \ref{itm:sublist} of Corollary~\ref{corollario-automatiche} can be made explicit, or even improved, for some  families of words (see Section \ref{rem-appendix} of Appendix \ref{app-A}). 
\end{remark}

\section{A key tool: the \texorpdfstring{$p$}{p}-adic Subspace Theorem}\label{sec:p-adic-subspace}

The proof of our main results requires a slightly more general version of the subspace theorem than the one by Schmidt used in \cite{Bugeaud2010AutomaticCF}, in order to include $p$-adic  places. 
We recall the classical definition of Weil height (see for instance \cite[\S 1.5.6]{BombieriGubler2006}). For our purposes, we restrict our attention to the case of an affine point in $ \ZZ[1/p]^N$.
\begin{definition}
\label{def:height}
    Let $\mathbf{z}$ be a vector $(z_{1},z_{2},\dots,z_{N})\in \ZZ[1/p]^N$. The \emph{(multiplicative) Weil height} of $\mathbf{z}$ is
\begin{equation*}
H(\mathbf{z})=\max(1,\max_i(|z_i|_\infty)) \cdot  \max(1,\max_i(|z_i|_p)).
\end{equation*}  
\end{definition}
We now state the generalisation of Schmidt's subspace theorem, due to Schlickewei \cite{Schlick1,Schlick2}, which can be obtained, for instance, as easy consequence of \cite[Theorem 2.3']{Bilu}. We refer to \cite{Bilu} for a comprehensive overview on the subject.
\begin{theorem}
\label{thm:subspace-appl}
Let $N\geq 1$ be an integer, let $S$ be the set containing the archimedean and the $p$-adic place of $\mathbb{Q}$.  For each $v \in S$, let $\{L_{1,v},\dots,L_{N,v}\}$ be a set of linearly independent  linear forms in the variables $X_1, \dots, X_N$ with algebraic  coefficients in $\mathbb{Q}_v$. 
 Let $(\mathbf{z}_n)_{n\geq 1}$ be a sequence of vectors in $\ZZ[1/p]^N$ and set $\mathbf{z}_n=(z_{n,1},z_{n,2},\dots,z_{n,N}) 
 $. Assume that there exist strictly positive reals $C_1, C_2, A, B$ with $A<1\leq B$ and an increasing sequence of positive integers $(u_n)_{n\geq 1}$ such that 
 \begin{align*} 
  \prod_{v \in S} \prod_{i=1}^N |L_{i,v}(\mathbf{z}_n)|_v &\leq C_1\cdot  A^{u_n} \\
  H(\mathbf{z}_n) & \leq C_2 \cdot  B^{u_n}.
  \end{align*}
 Then there exist $x_1,\ldots, x_N\in\ZZ$, not all zero, and an infinite subset $\mathcal{N}\subseteq \NN $ such that, for every $j\in \mathcal{N}$, one has
 \[x_1z_{j,1}+x_2z_{j,2}+\cdots + x_Nz_{j,N}=0.\]
\end{theorem}

\begin{proof}Up to normalizing the coefficients of the linear forms and increasing the constant $B$, we can assume that  \[ 
  \prod_{v \in S} \prod_{i=1}^N |L_{i,v}(\mathbf{z}_n)|_v\leq   A^{u_n} \hbox{ and }
  H(\mathbf{z}_n) \leq B^{u_n}.\]
Let $\epsilon>0$ be such that $A<1/B^\epsilon$. Then,
$$\prod_{v \in S} \prod_{i=1}^N |L_{i,v}(\mathbf{z}_n)|_v \leq  A^{u_n} < \frac 1 {B^{u_n\epsilon}}\leq \frac 1 {H(\mathbf{z}_n)^\epsilon}.$$
Then the result follows from \cite[Theorem 2.3']{Bilu}, by noticing that $H(\mathbf{z}_n)$ is larger or equal than the height of $\mathbf{z}_n$ as defined in \cite[p.5, formula (3)]{Bilu}. 
\end{proof}

In what follows, given two real sequences $(t_n)_{n\geq 1}$ and $(s_n)_{n\geq 1}$, we will use the notation $t_n \ll s_n$ to indicate that there exists a positive constant $C$ such that $t_n\leq C s_n$ for every $n \in \NN$.

\section{Proof of Theorem \ref{thm:pAdicBugeaud}}\label{sec:proof-picche}

Our proof strategy closely follows the one of~\cite[Thm.\,3.1]{Bugeaud2010AutomaticCF}.

Before proving the result we fix some notation. 
By hypothesis the sequence $(a_n)_{n\geq 1}$ satisfies condition $\spadesuit$ with constant $c$. Hence, there exist three sequences of finite words $(W_n)_n,(U_n)_n,(V_n)_n$ satisfying conditions \ref{it1}, \ref{it2} and \ref{it3} of Definition~\ref{def-spade-club-with-constant}. 

Setting $w_n=|W_n|$, $u_n=|U_n|$, and $v_n=|V_n|$, we have that, for every $n\geq 0$, $u_{n+1}\geq u_n$ and \begin{equation}\label{max-quot-w_n-u_n-v_n}
    \max(v_n/u_n,w_n/u_n)\leq c.
\end{equation}

We let also 
\[\ell_n=w_n+u_n+v_n.\]

We assume by contradiction that $\alpha$ is algebraic of degree at least $3$ over $\mathbb{Q}$.

\subsection{A preliminary lemma}
The proof of Theorem \ref{thm:pAdicBugeaud} is based on several applications of the $p$-adic Subspace Theorem (see Theorem \ref{thm:subspace-appl}), the first one of which is used to prove the following lemma. 
\begin{lemma}\label{picche-appl-subspace-1}
Let $A_n$ and $B_n$ as in Theorem \ref{thm:pAdicBugeaud} and $\ell_n, w_n$ as above. Then, there exist a non zero quadruple $(x_1,x_2,x_3,x_4)\in\ZZ^4$ and an infinite set $\mathcal{N}_1\subseteq \mathbb{N}$ such that
\begin{equation}\label{eq:Bugeaud-3.5}\begin{split} & x_1(B_{w_n-1}B_{\ell_n}- B_{w_{n}}B_{\ell_n-1})+x_2(B_{w_n-1}A_{\ell_n}-B_{w_{n}}A_{\ell_n-1})+\\
& +x_3(A_{w_n-1}B_{\ell_n}-A_{w_{n}}B_{\ell_n-1})+x_4(A_{w_n-1}A_{\ell_n}-A_{w_{n}}A_{\ell_n-1})=0
\end{split}
\end{equation}
for any $n\in \mathcal{N}_1$.
\end{lemma}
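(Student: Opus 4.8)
The plan is to apply the $p$-adic Subspace Theorem (Theorem~\ref{thm:subspace-appl}) with $N=4$, using the quadratic numbers $\alpha_n := [0,a_1,\dots,a_{w_n},\overline{a_{w_n+1},\dots,a_{\ell_n}}]_s$ as the driving sequence of approximations to $\alpha$. By Lemma~\ref{lemma-polin-preperiodic}, each $\alpha_n$ is a root of $a^{(n)}X^2 - b^{(n)}X + c^{(n)}$ whose coefficients are exactly the four parenthesized quantities appearing in \eqref{eq:Bugeaud-3.5} (up to signs), built from the continuants $A_{w_n-1},A_{w_n},B_{w_n-1},B_{w_n},A_{\ell_n-1},A_{\ell_n},B_{\ell_n-1},B_{\ell_n}$ of $\alpha$ — crucially these depend only on $a_1,\dots,a_{\ell_n}$, which $\alpha$ and $\alpha_n$ share. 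The vector to feed into the Subspace Theorem will be $\mathbf z_n = (a^{(n)}, -b^{(n)}/2, \dots)$ — more precisely I would take $\mathbf z_n$ to be the coefficient vector $(a^{(n)}, b^{(n)}, c^{(n)})$ padded appropriately, but since we want a $4$-term relation matching \eqref{eq:Bugeaud-3.5}, the natural choice is $\mathbf z_n = \bigl(B_{w_n-1}B_{\ell_n}-B_{w_n}B_{\ell_n-1},\ B_{w_n-1}A_{\ell_n}-B_{w_n}A_{\ell_n-1},\ A_{w_n-1}B_{\ell_n}-A_{w_n}B_{\ell_n-1},\ A_{w_n-1}A_{\ell_n}-A_{w_n}A_{\ell_n-1}\bigr)$. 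These entries lie in $\ZZ[\tfrac1p]$.

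Next I would set up the linear forms. Since $\alpha$ is assumed algebraic, the key observation is that $\alpha$ is an approximate root of $a^{(n)}X^2-b^{(n)}X+c^{(n)}$: indeed $P_n(\alpha) = a^{(n)}\alpha^2-b^{(n)}\alpha+c^{(n)} = a^{(n)}(\alpha-\alpha_n)(\alpha-\alpha_n')$ where $\alpha_n'$ is the conjugate, so by Lemma~\ref{lemma2-nostro} the quantity $|P_n(\alpha)|_p$ is small — roughly $|B_{\ell_n}|_p \cdot |\alpha-\alpha_n|_p \ll |B_{\ell_n}|_p / |B_{\ell_n}|_p^2$-type bounds, combined with $|\alpha-\alpha_n|_p < |B_{\ell_n}|_p^{-2}$. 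Writing $P_n(\alpha)$ as a $\QQ$-linear combination of the four entries of $\mathbf z_n$ with coefficients $1,-\alpha,\alpha^2$ (and a zero, or rather grouping $-b^{(n)}$ so that the cross terms combine), I get one linear form at the $p$-adic place whose value at $\mathbf z_n$ is tiny; the other three $p$-adic forms I take to be coordinate projections. At the archimedean place I take all four forms to be coordinate projections. Then I estimate: the product over the three trivial $p$-adic forms times the four archimedean forms is bounded by $H(\mathbf z_n)$-type quantities controlled by $C_\infty^{\ell_n}$ and $C_p^{\ell_n}$ via hypotheses \ref{bound-a_i-arch}–\ref{B_n^1/n-is-bounded} and Lemma~\ref{lem-An-Bn}, while the single small $p$-adic form contributes a genuine saving of size roughly $|B_{\ell_n}|_p^{-1}$ or better, which by hypothesis \ref{Ass} ($|a_i|_p\ge p^k$) is at least $p^{-k\ell_n}$... wait, that goes the wrong way. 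The point is subtler: the saving comes from $|\alpha-\alpha_n|_p < |B_{\ell_n}|_p^{-2}$ being compared against $H(\mathbf z_n)$, and one needs the $p$-adic size of the continuants $|B_{\ell_n}|_p$ (which is \emph{large}, growing like $\prod|a_i|_p \ge p^{k\ell_n}$) to beat the archimedean height growth $C_\infty^{\ell_n}$. Concretely the ratio $\log|B_{\ell_n}|_p \big/ \log(C_\infty^{\ell_n})$ exceeds $1$ precisely when $k > \log C_\infty/\log p$, which is why hypothesis \ref{Ass} with that particular value of $k$ enters. So I would arrange the exponent $u_n = \ell_n$ (or $w_n$, matching the statement's use of $w_n$ — actually one should look carefully at which continuants carry the savings; the natural choice pairs $|B_{w_n}|_p$-type denominators, giving $u_n$ proportional to $w_n$ or $u_n$ via \eqref{max-quot-w_n-u_n-v_n}), and check $A<1\le B$.

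The main obstacle, and the step requiring the most care, is the \textbf{bookkeeping of the $p$-adic and archimedean sizes of the continuant-products} $B_{w_n-1}B_{\ell_n}-B_{w_n}B_{\ell_n-1}$ and its three companions, and verifying that the resulting product $\prod_{v\in S}\prod_i |L_{i,v}(\mathbf z_n)|_v$ genuinely satisfies the hypothesis $\le C_1 A^{u_n}$ with $A<1$, uniformly in $n$. This is where hypothesis \ref{Ass} is calibrated: one must show the $p$-adic gain from $|\alpha - \alpha_n|_p$ outweighs the archimedean loss from $\max(|A_n|_\infty,|B_n|_\infty)\le C_\infty^n$, using $v_n,w_n \le c\,u_n$ to express everything in terms of a single parameter. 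A secondary subtlety is checking linear independence of the chosen forms at each place (immediate for coordinate projections, and the one nontrivial $p$-adic form is independent of the coordinate ones since it has a nonzero $\alpha^2$-coefficient). Once the Subspace Theorem applies, it directly yields the integer quadruple $(x_1,x_2,x_3,x_4)$ and the infinite set $\mathcal N_1$ with the relation \eqref{eq:Bugeaud-3.5} holding for all $n\in\mathcal N_1$, which is exactly the claim.
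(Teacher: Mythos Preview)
Your overall strategy—take $\alpha_n=[0,W_n,\overline{U_nV_n}]_s$, observe via Lemma~\ref{lemma-polin-preperiodic} that its minimal polynomial has exactly the four parenthesized quantities of \eqref{eq:Bugeaud-3.5} as coefficient vector $\mathbf z_n$, and feed $\mathbf z_n$ into Theorem~\ref{thm:subspace-appl}—is the right one and matches the paper. However, your proposed choice of $p$-adic linear forms does not work. You take one ``small'' form (whose value at $\mathbf z_n$ is $P_n(\alpha)$) together with three coordinate projections at $p$. In the archimedean applications this is fine because integer coordinates satisfy $|z_{n,j}|_p\le1$ at every finite place; here the $z_{n,j}$ lie in $\ZZ[1/p]$ and, by Lemma~\ref{lem-An-Bn}\ref{A-n-B_n-prod-aut}, satisfy $|z_{n,j}|_p\le|B_{w_n}B_{\ell_n}|_p$, which is at least $p^{k(w_n+\ell_n)}$. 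Three such factors swamp the saving from $|P_n(\alpha)|_p$: if, say, every $|a_i|_p=p^k$, the $p$-adic product of your four forms is of order $p^{2k(w_n+\ell_n-u_n)}=p^{2k(2w_n+v_n)}\ge1$, so the double product over both places does not decay and Theorem~\ref{thm:subspace-appl} cannot be invoked.

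The paper's fix is to take at $p$ the forms $L_{1,p}=\alpha^2X_1-\alpha(X_2+X_3)+X_4$, $L_{2,p}=\alpha X_1-X_2$, $L_{3,p}=\alpha X_1-X_3$, $L_{4,p}=X_1$. The observation you are missing is that $z_{n,2}/z_{n,1}$ and $z_{n,3}/z_{n,1}$ are themselves $p$-adically close to $\alpha$ (they are built from convergents $A_m/B_m$), so that $|L_{2,p}(\mathbf z_n)|_p\le|B_{w_n}|_p/|B_{\ell_n}|_p$ and $|L_{3,p}(\mathbf z_n)|_p\le|B_{\ell_n}|_p/|B_{w_n}|_p$; their product is $\le1$ and neutralises the large factor $|L_{4,p}(\mathbf z_n)|_p=|z_{n,1}|_p$. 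A second point you blur over: the relevant bound is $|\alpha-\alpha_n|_p<|B_{\ell_n+u_n}|_p^{-2}$, not merely $|B_{\ell_n}|_p^{-2}$, since by the $\spadesuit$ condition $\alpha$ and $\alpha_n$ share their first $\ell_n+u_n$ partial quotients. This extra $u_n$ is precisely the source of the genuine saving: after the cancellations above, the full $p$-adic product collapses to $|B_{\ell_n}|_p^2|B_{\ell_n+u_n}|_p^{-2}\le p^{-2ku_n}$, which hypothesis~\ref{Ass} then pits successfully against the archimedean growth $C_\infty^{4(3c+1)u_n}$.
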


\begin{proof}
To simplify the notation, we set  
\[
\mathbf{z}_n=(z_{n,1},z_{n,2},z_{n,3},z_{n,4}),
\] 
where
\begin{equation}\label{eq:zeta}\begin{split} z_{n,1}& =B_{w_n -1}B_{\ell_n}- B_{w_{n}}B_{\ell_n-1}\\
z_{n,2} & =B_{w_n -1}A_{\ell_n}-B_{w_{n}}A_{\ell_n-1}\\
 z_{n,3} & = A_{w_n -1}B_{\ell_n}-A_{w_{n}}B_{\ell_n-1}\\
z_{n,4} &=A_{w_{n}-1}A_{\ell_n}-A_{w_{n}}A_{\ell_n-1}. \end{split}\end{equation}
We now want to apply the Subspace Theorem \ref{thm:subspace-appl} to the sequence $(\mathbf{z}_n)_n$ with carefully chosen linear forms.

To this aim, for $1\leq i\leq 4$ we choose the linear forms \[L_{i,\infty}(X_1,X_2,X_3,X_4)=X_i\] and 
\begin{align*}
L_{1,p}(X_1,X_2,X_3,X_4)&=\alpha^2 X_1-\alpha(X_2+X_3)+X_4, \\
L_{2,p}(X_1,X_2,X_3,X_4)&=\alpha X_1-X_2, \\
L_{3,p}(X_1,X_2,X_3,X_4)&=\alpha X_1-X_3, \\
L_{4,p}(X_1,X_2,X_3,X_4)&=X_1.
\end{align*}
We now want to estimate the quantities $|L_{i,\infty}(\mathbf{z}_n)|_{\infty}$ and $|L_{i,p}(\mathbf{z}_n)|_{p}$.

To this purpose, we set $\alpha_n=[W_n\overline{U_n V_n}]_s$. By Lemma \ref{lemma-polin-preperiodic}, $\alpha_n$ is a root of the following polynomial of degree 2 
\begin{equation*}
P_n(X)=z_{n,1} X^2-(z_{n,2}+z_{n,3})X+z_{n,4}.\end{equation*}

\noindent Moreover, by \eqref{eq-alpha-parti-quot},
    
   \[
        \left|\alpha- \frac{A_{n}}{B_{n}}\right|_p =  \frac{1}{\left|B_{n}B_{n+1}\right|_p}\]
        and     
  \begin{equation*}
  \left|\alpha_n- \frac{A_{\ell_n+u_n}}{B_{\ell_n+u_n}}\right|_p =  \frac{1}{\left|B_{\ell_n+u_n}B_{\ell_n+u_n+1}\right|_p}.
    \end{equation*}

\noindent Thus,
\begin{equation}
\label{eqn:alphaVSalphan}
    |\alpha-\alpha_n|_p\leq \frac{1}{|B_{\ell_n+u_n}B_{\ell_n+u_n+1}|_p}.
\end{equation}

Notice that, by Lemma \ref{lem-An-Bn} and  the fact that $|a_1|_p>1$, for every $m\geq 2$, we have
\begin{equation*}
|A_m|_p=\prod_{i=2}^m|a_i|_p< |A_{m+1}|_p,\quad |B_m|_p=\prod_{i=1}^m|a_i|_p < |B_{m+1}|_p, \quad |A_m|_p< |B_m|_p.\end{equation*}

Moreover, for every $n\geq 1$, we have 
\begin{equation}\label{max-p-adic-abs-val-z_{i,j}}
\max_{1\leq j\leq 4} |z_{n,j}|_p\leq |B_{w_{n}}B_{\ell_n}|_p.
\end{equation}

By \eqref{eq-alpha-parti-quot} and Lemma \ref{lem-An-Bn} \ref{B_m-B_m+1},  we have
\begin{align}\label{eq:3.2}
\begin{split}
|L_{2,p}(\mathbf{z}_n)|_p &= |(B_{w_{n}-1}B_{\ell_n}- B_{w_{n}}B_{\ell_n-1})\alpha- (B_{w_{n}-1}A_{\ell_n}-B_{w_{n}}A_{\ell_n-1})|_p\\
& \leq \max\left( |B_{w_{n}-1} |_p|B_{\ell_n}\alpha-A_{\ell_n}|_p,
|B_{w_{n}}|_p |B_{\ell_n-1}\alpha-A_{\ell_n-1}|_p\right)  \\
&\leq |B_{w_n}|_p|B_{\ell_n}|_p^{-1}
\end{split}
\end{align}
and
\begin{align}\label{eq:3.3}
\begin{split}
|L_{3,p}(\mathbf{z}_n)|_p &= |(B_{w_{n}-1}B_{\ell_n}- B_{w_{n}}B_{\ell_n-1})\alpha- (A_{w_{n}-1}B_{\ell_n}-A_{w_{n}}B_{\ell_n-1})|_p \\
& \leq \max\left( |B_{\ell_n} |_p|B_{w_{n}-1}\alpha-A_{w_{n}-1}|_p,
|B_{\ell_n-1} |_p|B_{w_{n}}\alpha-A_{w_{n}}|_p\right)  \\
&\leq |B_{\ell_n}|_p |B_{w_n}|_p^{-1}.
\end{split}
\end{align}

\noindent Also, by \eqref{eqn:alphaVSalphan}, \eqref{max-p-adic-abs-val-z_{i,j}}, \eqref{eq:3.2} and \eqref{eq:3.3}, we have
\begin{align}\label{eq:3.4}
\begin{split}
|L_{1,p}(\mathbf{z}_n)|_p  = &|P_n(\alpha)|_p 
= |P_n(\alpha)-P_n(\alpha_n)|_p=|z_{n,1}(\alpha^2-\alpha_n^2)-(z_{n,2}+z_{n,3})(\alpha-\alpha_n)|_p\\
=& |(\alpha-\alpha_n)|_p \cdot |(z_{n,1}\alpha-z_{n,2})+(z_{n,1}\alpha_n-z_{n,3})|_p=\\
=& |(\alpha-\alpha_n)|_p \cdot |L_{2,p}(\mathbf{z}_n)+L_{3,p}(\mathbf{z}_n)+z_{n,1}(\alpha_n-\alpha)|_p\\
\leq & |\alpha-\alpha_n|_p \max\left( |B_{w_n}|_p|B_{\ell_n} |_p^{-1}, |B_{w_n}|_p^{-1}|B_{\ell_n}|_p, |B_{w_n}|_p|B_{\ell_n}|_p|\alpha-\alpha_n|_p\right)   \\
\leq & |B_{w_n}|_p^{-1} |B_{\ell_n}|_p |B_{\ell_n+u_n}|_p^{-2}. 
\end{split}
\end{align}
Moreover, by \eqref{max-p-adic-abs-val-z_{i,j}}, we have 
\begin{equation}\label{eq:3.5}
|L_{4,p}(\mathbf{z}_n)|_p=|z_{n,1}|_p\leq |B_{w_{n}}B_{\ell_n}|_p.
\end{equation}

By the above formulas \eqref{eq:3.2}, \eqref{eq:3.3}, \eqref{eq:3.4}, \eqref{eq:3.5}, we have that 
\begin{equation*}\label{bound-linear-forms}
\prod_{j=1}^4 |L_{j,p}(\mathbf{z}_n)|_p \leq |B_{\ell_n}|^2_p |B_{\ell_n+u_n}|^{-2}_p.
 \end{equation*}
Since, by the assumption \ref{Ass} of Theorem \ref{thm:pAdicBugeaud},
 \begin{equation*}
     |B_{\ell_n+u_n}|_p= |B_{\ell_n}|_p\prod_{i=\ell_n+1}^{\ell_n+u_n} |a_i|_p\geq |B_{\ell_n}|_p \cdot p^{ku_n},
 \end{equation*}
 and, by \ref{bound-a_i-arch},
 \begin{equation}\label{bound-forme-var-arch}\max_{1\leq j\leq4}|{z}_{n,j}|_\infty\leq 2 C_{\infty}^{2w_n+u_n+v_n}\leq 2 C_{\infty}^{(3c+1)u_n},
 \end{equation}
 we find 
 \begin{equation}\label{bound-linear-forms-1}
\prod_{j=1}^4 |L_{j,p}(\mathbf{z}_n)|_p \prod_{j=1}^4 |z_{n,j}|_\infty 
\leq 16  \left (\frac {C_{\infty}^{4(3c+1)}} {p^{2k}}\right)^{u_n}.\end{equation}
Using  \eqref{max-p-adic-abs-val-z_{i,j}}, \eqref{bound-forme-var-arch}, and assumption~\ref{B_n^1/n-is-bounded} of Theorem~\ref{thm:pAdicBugeaud}, we also have
\begin{equation}\label{bound-altezza}
H(\mathbf{z}_n)\leq  {2 C_{\infty}^{(3c+1)u_n}}|B_{w_n}|_p|B_{\ell_n}|_p \leq 2\cdot  \left((C_{\infty} C_p)^{3c+1}\right)^{u_n}.\end{equation}
Then, by Theorem \ref{thm:subspace-appl}, the conclusion follows provided that the quantity $\frac {C_{\infty}^{4(3c+1)}} {p^{2k}}$ in~\eqref{bound-linear-forms-1} is $<1$, and this is in fact the case by hypothesis \ref{Ass} of Theorem \ref{thm:pAdicBugeaud}. 
\end{proof}

\subsection{Splitting the proof in two cases}
We are now ready to prove Theorem \ref{thm:pAdicBugeaud}. By Lemma~\ref{picche-appl-subspace-1}, we may consider only the indices in the subset $\mathcal{N}_1$ so that~\eqref{eq:Bugeaud-3.5} is satisfied. 

Here the proof is split in two cases, depending on whether the sequence $(w_n)_{n\in \mathcal{N}_1}$ has a constant subsequence or not. 

In the first case, using the continuants of $\alpha$, Lemma \ref{picche-appl-subspace-1} and the Subspace theorem, we construct an irrational number $\beta$ such that both $(1,\alpha,\beta)$ and $(1,\alpha,1/\beta)$ are linearly dependent over $\mathbb{Z}$. Combining these linear relations, we obtain a contradiction. 

In the second case, using Lemma \ref{picche-appl-subspace-1} and suitable auxiliary words, we show that $\alpha$ is a root of a certain polynomial of degree at most 2, which must therefore be trivial. The vanishing of its coefficients, gives relations among the continuants of $\alpha$ and these, together with the Subspace theorem, allow to conclude.  

\subsubsection*{\bf Case (1): $(w_n)_{n\in \mathcal{N}_1}$ has a constant subsequence}
Let $w$ be such that $w_n=w$ for infinitely many $n\in \mathcal{N}_1$.
By extracting an infinite subset of $\mathcal{N}_1$ and replacing $\alpha$ by \[[0,a_{ w+1},a_{ w+2},\ldots]_s,\] we may without loss of generality assume that $w_n=0$ and hence $\ell_n=u_n+v_n$ for every $n\in \mathcal{N}_1$.
Let now  \[\beta=\lim_{\substack{n\rightarrow \infty\\ n\in \mathcal{N}_1}}\frac {B_{\ell_n-1}}{B_{\ell_n}}.\]
We claim that:
\begin{enumerate}[label=(\alph*)]
\item\label{cond-beta-i} $\beta$ is irrational;
\item\label{cond-beta-ii} there is a non zero triple of integers $(y_1,y_2,y_3)$ such that \begin{equation} \label{eq:subspace2bis} y_1+y_2\beta+y_3\alpha=0;
\end{equation} 
\item\label{cond-beta-iii} there is a non zero triple of integers $(z_1,z_2,z_3)$ such that \begin{equation} \label{eq:subspace3bis} \frac{z_1} \beta + z_2 +z_3\alpha =0.\end{equation}
\end{enumerate}
Assuming these claims, from \eqref{eq:subspace2bis} and \eqref{eq:subspace3bis} we obtain
$$(z_3\alpha+z_2)(y_3\alpha+y_1)=y_2z_1.$$
Since from \ref{cond-beta-i} $\alpha$ and $\beta$ are both irrational, we get from \eqref{eq:subspace2bis} and \eqref{eq:subspace3bis} that $y_3z_3\not=0$. This shows that $\alpha$ is an algebraic number of degree at most two, which is a contradiction with our assumption that $\alpha$ is algebraic of degree at least 3. This concludes the proof in this case.

We now prove the above claims. Let $(x_1, x_2, x_3, x_4)\in \mathbb{Z}^4$ and $\mathcal{N}_1\subseteq \mathbb{N}$ be as in Lemma \ref{picche-appl-subspace-1}. 
Recalling that $B_{-1}=A_0=0$ and $B_0=A_{-1}=1$, we deduce from 
\eqref{eq:Bugeaud-3.5}
that
\begin{equation}\label{eq:Bugeaud-3.6} x_1B_{\ell_n-1}+x_2A_{\ell_n-1}-x_3B_{\ell_n}-x_4A_{\ell_n}=0
\end{equation}
for any $n\in \mathcal{N}_1$. Dividing 
\eqref{eq:Bugeaud-3.6} by $B_{\ell_n}$ and letting $n$ tend to infinity in $\mathcal{N}_1$ we get
\begin{equation}\label{eq:beta}\beta=\frac {x_3+x_4\alpha}{x_1+x_2\alpha}.\end{equation}
Observe that $x_1+x_2\alpha\neq 0$, since otherwise $\alpha$ would be rational.

\medskip
We now want to prove \ref{cond-beta-i} and \ref{cond-beta-ii} (notice that \ref{cond-beta-i} is not  automatically guaranteed by \eqref{eq:beta} since a priori one could have $x_3x_2-x_4x_1=0$).

This is again obtained by an application of the Subspace Theorem \ref{thm:subspace-appl} with $N=3$ and the linear forms, for $1\leq i\leq 3$ $$L_{i,\infty}'(X_1,X_2,X_3)=X_i$$
and 
$$L'_{1,p}(X_1,X_2,X_3)=\beta X_1-X_2, \quad L'_{2,p}(X_1,X_2,X_3)=\alpha X_1-X_3, \quad L'_{3,p}(X_1,X_2,X_3)=X_2,$$
evaluated at the points  $(B_{\ell_n},B_{\ell_n-1}, A_{\ell_n})$.

Setting $Q_n=\frac {A_n}{B_n}$, we have, by~\eqref{eq:Bugeaud-3.6},
\[\frac{B_{\ell_n-1}}{B_{\ell_n}}=\frac{x_3+x_4Q_{\ell_n}}{x_1+x_2 Q_{\ell_n-1}},\]
and therefore, by~\eqref{eq:beta},
\[
\left |\beta - \frac{B_{\ell_n-1}} {B_{\ell_n}} \right |_p 
= \left |\frac{x_1x_4(\alpha-Q_{\ell_n})-x_2x_3(\alpha-Q_{\ell_n-1})+x_2x_4\alpha(Q_{\ell_n-1}-Q_{\ell_n})}  {(x_1+x_2\alpha)(x_1+x_2Q_{\ell_n-1})} \right |_p.\] 
We notice that, since $Q_{\ell_n-1}$ converges $p$-adically to $\alpha$ and $x_1+x_2\alpha \neq 0$ since $\alpha$ is irrational, we have that, for $n$ sufficiently large, $|x_1+x_2Q_{\ell_n}|_p=|x_1+x_2\alpha|_p$. Hence, for $n \in \mathcal N_1$, we have
\begin{equation}\label{eq:betaVSB}
|B_{\ell_n}\beta- B_{\ell_{n}-1}|_p\leq \frac{C'}{|B_{\ell_n-1}|_p}
\end{equation}
for some real $C'>0$.

Furthermore, observe that, for any integer $n\in\mathcal{N}_1$, we have, by Lemma \ref{lem:approx} and~\eqref{eq:betaVSB}, 
\begin{equation}\label{eq:stimaalphabeta}\max\left (|B_{\ell_{n}}\alpha- A_{\ell_{n}}|_p , |B_{\ell_{n}-1}\alpha- A_{\ell_{n}-1}|_p ,|B_{\ell_n}\beta- B_{\ell_{n}-1}|_p\right )\ll \frac{1}{{|B_{\ell_n-1}|_p}}.   \end{equation} 
We obtain, by \eqref{eq:stimaalphabeta} and hypotheses~\ref{bound-a_i-arch} and~\ref{Ass} of Theorem~\ref{thm:pAdicBugeaud},
\begin{equation}
    \label{eqn:leftSchmidt}
    \prod_{1\leq j\leq 3} |L'_{j,p}(B_{\ell_n},B_{\ell_n-1}, A_{\ell_n})|_p |L'_{j,\infty}(B_{\ell_n},B_{\ell_n-1}, A_{\ell_n})|_{\infty} \ll  \frac {C_{\infty}^{3\ell_n}}{|B_{\ell_n-1}|_p}\ll \left (\frac {C_{\infty}^3} {p^k}\right)^{\ell_n} 
\end{equation}
and
\begin{equation}\label{stima-H-subspace2}H(B_{\ell_n},B_{\ell_n-1}, A_{\ell_n})\ll (C_{\infty} C_p)^{\ell_n}. \end{equation}
Notice that, up to passing to a subset of $\mathcal{N}_1$, we might assume that $\ell_n$ is increasing. By the definition of $k$ in Theorem \ref{thm:pAdicBugeaud} we have
$C_\infty^3/p^k<1$ in~\eqref{eqn:leftSchmidt}. Therefore, by Theorem \ref{thm:subspace-appl},  there exist a non zero integer triple $(y_1,y_2,y_3)$ and an infinite set of distinct positive integers $\mathcal{N}_2\subseteq \mathcal{N}_1$ such that
\begin{equation} \label{eq:subspace2} y_1B_{\ell_n}+y_2B_{\ell_{n}-1}+y_3A_{\ell_n}=0\end{equation}
for any $n\in\mathcal{N}_2$.  Dividing \eqref{eq:subspace2} by $B_{\ell_n}$ and letting $n$ tend to infinity along $\mathcal{N}_2$, we get that
\eqref{eq:subspace2bis} holds, so that claim~\ref{cond-beta-ii} is proven.

Equations~\eqref{eq:subspace2} and~\eqref{eq:subspace2bis} then prove claim \ref{cond-beta-i}: indeed, if $\beta$ was rational,~\eqref{eq:subspace2bis} would imply $y_3=0$ (recall that we are supposing $\alpha$ irrational). But then, by \eqref{eq:subspace2} we would have \[
B_{\ell_n-1}/B_{\ell_{n}}=-y_1/y_2,
\]  for all $n\in\mathcal{N}_2$ and, for each such index, by Lemma~\ref{lem:palindromeCF} we would have
\begin{equation*}
    \label{eq:BnBn+1}
    -\frac{y_2}{y_1}=\frac{B_{\ell_n-1}}{B_{{\ell_n}}}=[0,a_{\ell_n},\dots,a_1]_s,
\end{equation*} 
contradicting the uniqueness of $p$-adic continued fraction expansion of $-y_2/y_1$ attached to the floor function $s$.\footnote{Notably, this is the first time in the proof where we use the fact that the word associated to $\alpha$ arises from a given floor function.}

To prove \ref{cond-beta-iii}, we also apply the Subspace Theorem \ref{thm:subspace-appl} with $N=3$ and the linear forms
$$L''_{i,\infty}(X_1,X_2,X_3)=X_i$$
for $1\leq i\leq 3$ and
$$L''_{1,p}(X_1,X_2,X_3)=\beta X_1-X_2,\quad L''_{2,p}(X_1,X_2,X_3)=\alpha X_2-X_3,\quad L''_{3,p}(X_1,X_2,X_3)=X_2.$$
Evaluating them on the triple  $(B_{\ell_n},B_{\ell_n-1}, A_{\ell_{n-1}})$ with $n\in\mathcal{N}_2$, we infer from \eqref{eq:stimaalphabeta} that 
\[\prod_{1\leq j\leq 3} |L''_{j,p}(B_{\ell_n},B_{\ell_n-1}, A_{\ell_n-1})|_p|L''_{j,\infty}(B_{\ell_n},B_{\ell_n-1}, A_{\ell_n-1})|_{\infty}\ll \left(\frac {C_{\infty}^{3}}{p^k}\right)^{\ell_n}.\]
As before, estimating $H(B_{\ell_n},B_{\ell_n-1}, A_{\ell_n-1})$ as in \eqref{stima-H-subspace2}, and using the definition of $k$ in Theorem \ref{thm:pAdicBugeaud}, we can apply Theorem \ref{thm:subspace-appl} and deduce that there exist a non zero integer triple $(z_1,z_2,z_3)$ and an infinite subset $\mathcal{N}_3\subseteq \mathcal{N}_2$ such that
\begin{equation} \label{eq:subspace3} z_1B_{\ell_n}+z_2B_{\ell_n-1}+z_3A_{\ell_{n}-1}=0\end{equation}
for any $n\in\mathcal{N}_3$.  Dividing \eqref{eq:subspace3} by $B_{\ell_{n-1}}$ and letting $n$ tend to infinity along $\mathcal{N}_3$, we get \eqref{eq:subspace3bis}, concluding the proof of this case.

\subsubsection* {\bf Case (2): $(w_n)_{n\in \mathcal{N}_1}$ has no constant subsequences} As detailed in \cite[page 1013]{Bugeaud2010AutomaticCF}, up to suitably modifying the words $U_n$ and $W_n$, we can  assume that the last letter of $U_nV_n$ is different from that of $W_n$, that is  $a_{w_n}\not = a_{\ell_n}$ for any $n\in\mathcal{N}_1$. 

We now set 
\begin{equation}\label{eq:Qn} Q_n= \frac {B_{w_n-1}B_{\ell_n}}{B_{w_n}B_{\ell_n-1}}\end{equation} 
and 
\begin{equation}\label{def-R_m}R_m=\alpha-\frac {A_m}{B_m}.\end{equation}

\noindent Dividing \eqref{eq:Bugeaud-3.5} by $B_{w_n}B_{\ell_n-1}$, we obtain
\begin{equation*}\begin{split} & x_1(Q_n-1)+x_2(Q_n(\alpha-R_{\ell_n})-(\alpha-R_{\ell_{n}-1})) +x_3(Q_n(\alpha-R_{w_n-1})-(\alpha-R_{w_n})) +\\
& + x_4(Q_n(\alpha-R_{w_n-1})(\alpha-R_{\ell_n})-(\alpha-R_{w_n})(\alpha-R_{\ell_n-1}))=0.
\end{split}\end{equation*}
Letting also \[P(\alpha)=x_1+(x_2+x_3)\alpha+x_4\alpha^2,\] this yields
\begin{equation}\label{eq:3.14-Bugeaud}\begin{split} (Q_n-1)P(\alpha)=& 
x_2Q_nR_{\ell_n}-x_2R_{\ell_n-1}+x_3Q_nR_{w_n-1}-x_3R_{w_n}
- x_4Q_nR_{w_n-1}R_{\ell_n}+\\ &+x_4R_{w_n}R_{\ell_n-1}+\alpha x_4(Q_nR_{w_n-1}+Q_nR_{\ell_n}-R_{w_n}-R_{\ell_n-1}).
\end{split}
\end{equation}
We want to prove 
\begin{equation}\label{eq:claim-Bugeaud}
P(\alpha)=0.
\end{equation}

\noindent Suppose first that there exists $\epsilon>0$ and an infinite subset  $\mathcal{M}\subset \mathcal{N}_1$ such that $|Q_n-1|_p\geq \epsilon$ for all $n\in \mathcal{M}$.

Suppose further that  $|Q_n|_p\leq 1$ for infinitely many $n\in\mathcal{M}$.
Then, for $n$ tending to infinity along the elements of this subset, the right-hand side of \eqref{eq:3.14-Bugeaud} tends to 0 (indeed, by~\eqref{def-R_m}, $R_{m}$ tends to 0 for $m$ tending to infinity and $Q_m$ is bounded), which  implies~\eqref{eq:claim-Bugeaud}. 
If instead $|Q_n|_p>1$ for all but finitely many $n\in\mathcal{M}$, dividing both sides of formula \eqref{eq:3.14-Bugeaud} by $Q_n$ and letting $n$ tend to infinity, we obtain, as in the previous case, that \eqref{eq:claim-Bugeaud} holds. 


Suppose now that
\[\lim_{n\in \mathcal{N}_1}|Q_n-1|_p= 0.\]
Then, for $n$ sufficiently large, we have $|Q_n|_p=|1|_p=1$. 

Notice that, by Lemma \ref{lem:palindromeCF}, $Q_n$ is the quotient of the two continued fractions $\mathbf{a}_{\ell_n}=[a_{\ell_n}, a_{\ell_n-1},\dots,a_1]_s$ and $\mathbf{a}_{w_n}=[a_{w_n}, a_{w_n-1},\dots,a_1]_s$, hence must have $$|a_{\ell_n}|_p=|\mathbf{a}_{\ell_n}|_p=|\mathbf{a}_{w_n}|_p=|a_{w_n}|_p=p^{k_n},$$ for $n$ sufficiently large, with $k_n>0$. 

Let us now assume, by contradiction, that $P(\alpha)\neq 0$. Then, since $|R_\ell|_p \ll 1/|B_\ell B_{\ell+1}|_p$ for $\ell\geq 1$, from~\eqref{eq:3.14-Bugeaud}  we deduce
\begin{equation*}
    |Q_n - 1|_p \ll \frac{1}{|B_{w_n-1}B_{w_n}|_p},
\end{equation*}
which we can rephrase as 
 \[
   \left|\frac{\mathbf{a}_{\ell_n}}{\mathbf{a}_{w_n}}-1\right|_p\ll \frac{1}{|a_{w_n}|_p\cdot\prod_{i=1}^{w_n-1}|a_i|^2_p}.\]

Now, multiplying both the numerator and denominator of the left-hand side by $p^{k_n}$ and noticing that $|p^{k_n}\mathbf{a}_{w_n}|_p=1$ and $|a_i|_p\ge p$ for every $i\le 1$, we obtain, in particular, that 
\[
    \left|p^{k_n} \mathbf{a}_{\ell_n}-p^{k_n}\mathbf{a}_{w_n}\right|_p\ll \frac{1}{p^{2(w_n-1)+k_n}}.
    \]
    Therefore
    \[
    \left|[a_{\ell_n}, a_{\ell_n-1},\dots,a_1]_s-[a_{w_n}, a_{w_n-1},\dots,a_1]_s\right|_p\ll \frac{1}{p^{2(w_n-1)}}\leq \frac 1 p.
    \]
This implies that $\mathbf{a}_{\ell_n}\equiv\mathbf{a}_{w_n}\bmod p $, so that $a_{\ell_n}\equiv a_{w_n}\bmod p $. Therefore, by property~\ref{item-iv} in the definition of floor function, they are equal, contradicting our initial assumption that  $a_{w_n}\not = a_{\ell_n}$.

Therefore \eqref{eq:claim-Bugeaud} is true; since we are assuming that $\alpha$ is neither rational nor quadratic, we must have $x_1=x_2+x_3=x_4=0$. Then, equation \eqref{eq:Bugeaud-3.5} gives
\begin{equation*} B_{w_{n-1}}A_{\ell_n}-B_{w_{n}}A_{\ell_n-1}=A_{w_{n-1}}B_{\ell_n}-A_{w_{n}}B_{\ell_n-1}.
\end{equation*}
In particular, in \eqref{eq:zeta} we have $z_{n,2}=z_{n,3}$.
We can thus refine the argument in Lemma \ref{picche-appl-subspace-1}. Let $\alpha_n=[W_n,\overline{U_nV_n}]_s$. Then, by Lemma~\ref{lemma-polin-preperiodic}, $\alpha_n$ is a root of the polynomial 
\begin{equation*} 
P_n(X)=z_{n,1}X^2-2 z_{n,2}X+z_{n,4}.
\end{equation*}

\noindent Consider now the two sets of linearly independent linear forms
\begin{align*}
L'''_{1,p}(T_1,T_2,T_3) &= \alpha^2T_1-2\alpha T_2+T_3,\\
L'''_{2,p}(T_1,T_2,T_3) &= \alpha T_1-T_2,\\
L'''_{3,p}(T_1,T_2,T_3) &= T_1,\\
L'''_{1,\infty}(T_1,T_2,T_3) &= T_1,\\ 
L'''_{2,\infty}(T_1,T_2,T_3) &= T_2,\\
L'''_{3,\infty}(T_1,T_2,T_3) &= T_3;
\end{align*}
evaluating them on the triple $\mathbf{z}'_n=(z_{n,1},z_{n,2},z_{n,4})$ we find by  \eqref{eq:3.2}, \eqref{eq:3.4}, \eqref{eq:3.5}, and \eqref{bound-forme-var-arch} that
\begin{align*}\prod_{1\leq j\leq 3}|L'''_{j,p}(\mathbf{z}'_n)|_p\prod_{1\leq j\leq 3}|L'''_{j,\infty }(\mathbf{z}'_n)|_\infty &\ll |B_{w_n}|_p|B_{\ell_n}|_p|B_{\ell_n+u_n}|_p^{-2}C_\infty ^{3(\ell_n+w_n)}\\
&\leq \frac {C_\infty ^{3(3c+1)u_n}}{\prod_{j=\ell_n+1}^{\ell_n+u_n}|a_j|^2_p}\\
&\leq \left (\frac {C_\infty^{3(3c+1)}}{p^{2k}}\right)^{u_n}.
\end{align*}
Also, by \eqref{bound-altezza} we have
\[
    H(\mathbf{z}_n)\ll  (C_\infty C_p )^{(3c+1) u_n}.
\]

Again using the definition of $k$ in Theorem \ref{thm:pAdicBugeaud} and applying Theorem \ref{thm:subspace-appl} we obtain that there exists a non zero integer triple $(t_1,t_2,t_3)$ and an infinite set  $\mathcal{N}_4\subseteq \mathcal{N}_1$ such that
\begin{equation}\label{eq:3.17-Bugeaud} t_1 z_{n,1}+t_2 z_{n,2}+t_3z_{n,4}=0\end{equation}
for any $n\in\mathcal{N}_4$.
We proceed exactly as above. Divide \eqref{eq:3.17-Bugeaud} by $B_{w_n}B_{\ell_n-1}$ and set $Q_n$ as in \eqref{eq:Qn}. Then, we get
\begin{equation*}\label{eq:3.18-Bugeaud}
 t_1(Q_n-1)+t_2\left (Q_n\frac {A_{\ell_n}}  {B_{\ell_n}}- \frac {A_{\ell_{n}-1}}  {B_{\ell_{n}-1}}\right )+t_3\left (Q_n\frac {A_{w_{n}-1}}  {B_{w_{n}-1}}\frac {A_{\ell_{n}}}  {B_{\ell_{n}}}-\frac{A_{w_{n}}}  {B_{w_{n}}}\frac{A_{\ell_{n}-1}}  {B_{\ell_{n}-1}}\right )=0,
\end{equation*}
for any $n\in\mathcal{N}_4$. By arguing as after \eqref{eq:claim-Bugeaud} we derive that $t_1+t_2\alpha+t_3\alpha^2=0$, a contradiction since $\alpha$ is irrational and not quadratic.
This concludes the proof of Theorem \ref{thm:pAdicBugeaud}. \qed
\section{Proof of Theorem \ref{main-thm-fiori}}\label{sec:proof-fiori}
Suppose by contradiction that $\alpha$ were algebraic of degree at least $3$ over $\mathbb{Q}$.

By hypothesis the sequence $(a_n)_n$ satisfies condition $\clubsuit$ with constant $c$. Hence, there exist three sequences of finite words $(W_n)_n,(U_n)_n,(V_n)_n$ satisfying conditions \ref{it1}, \ref{it2} and \ref{it3} of Definition \ref{def-spade-club-with-constant}. 
Setting $w_n=|W_n|$, $u_n=|U_n|$, and $v_n=|V_n|$, for every $n\geq 0$,
\begin{equation*}
\max(v_n/u_n,w_n/u_n)\leq c.
\end{equation*}
  We let also 
\[\ell_n=w_n+u_n+v_n.\]

Here the proof splits into three cases.  

The first case occurs when the set $\{n \geq 1 \mid v_n=w_n = 0\}$ is  infinite: here, using the fact that the words $a_1 \cdots a_{2u_n}$ are palindromic for infinitely many $n$, an  application of the Subspace Theorem \ref{thm:subspace-appl} allows us to conclude. 

In the second case, we consider the situation where the set $\{n \geq 1 \mid v_n=w_n = 0\}$ is finite, but the set $\{n \geq 1 \mid w_n = 0\}$ is still infinite: in this case, on one hand we show that the continued fraction expansion of $\alpha$ has `many' partial quotients in common with that of certain non palindromic words constructed using certain continuants of $\alpha$. On the other hand, an application of the Subspace theorem and the uniqueness of the continued fraction expansion for a given floor function, imply that the words we considered are actually palindromic, leading to a contradiction.

Finally, the third case occurs when  $\{n \geq 1 \mid w_n = 0\}$ is finite.  Here, we use the words $U_n$, $V_n$ and $W_n$ together with the recurrence formulas defining the continuants to build two sequences of numbers whose continued fraction expansions have `many' partial quotients in common with that of $\alpha$. A first application of the Subspace theorem gives a relation between some of these words and a second application of the Subspace theorem allows us to conclude.

\subsubsection*{Case (1):  the set $\{n\geq 1\mid w_n=v_n=0\}$ is infinite.}
In this case (which occurs, in particular, when $c=0$) we have that, up to passing to an infinite subset of positive integers $n$, we may assume that $\ell_n=u_n$ and $U_n \widetilde{U_n}$ is a prefix of $\alpha$ for every $n$. Since the word $a_1\cdots a_{2u_n}=U_n \widetilde{U_n}$ is palindromic, we have that
 the matrix
\[\begin{pmatrix}
B_{2u_n}& B_{2u_n-1}\\
A_{2u_n}& A_{2u_n-1}
\end{pmatrix}=
\begin{pmatrix}
a_{1}& 1\\
1& 0
\end{pmatrix}\begin{pmatrix}
a_{2}& 1\\
1& 0
\end{pmatrix}\cdots \begin{pmatrix}
a_{2u_n}& 1\\
1& 0
\end{pmatrix}\]
is symmetric, hence, for every $n$ \begin{equation}\label{A_2un=B_2un-1}
    A_{2u_n}=B_{2u_n-1}.
\end{equation} 

Notice that by \eqref{eq-alpha-parti-quot} we have
\begin{equation}\label{form1-palind}
    \left|\alpha-\frac{A_{2u_n}}{B_{2u_n}}\right|_p\leq \frac{1}{|B_{2u_n}|_p^2}
\end{equation}
and also, by~\eqref{A_2un=B_2un-1},
\begin{equation}\label{form1bis-palind}
    \left|\alpha-\frac{A_{2u_n-1}}{A_{2u_n}}\right|_p=\left|\alpha-\frac{A_{2u_n-1}}{B_{2u_n-1}}\right|_p\leq \frac{1}{|B_{2u_n-1}|_p^2}.
\end{equation}

Moreover
\begin{align*}
\left|\alpha^2-\frac{A_{2u_n-1}}{B_{2u_n}}\right|_p & =  \left |\alpha^2-\frac{A_{2u_n-1}}{B_{2u_n-1}} \frac{A_{2u_n}}{B_{2u_n}} \right |_p\\
&= \left|\left (\alpha + \frac{A_{2u_n-1}}{B_{2u_n-1}}\right) \left (\alpha -\frac{A_{2u_n}}{B_{2u_n}} \right) +\alpha \left (\frac{A_{2u_n}}{B_{2u_n}}-\frac{A_{2u_n-1}}{B_{2u_n-1}}\right ) \right |_p\\
&\leq \max\left(|\alpha|_p\left |\alpha-\frac{A_{2u_n}}{B_{2u_n}}\right|_p, \frac{|\alpha|_p}{|B_{2u_n}B_{2u_n-1}|_p}\right) \end{align*}
 and since $|\alpha|_p<1$, we finally obtain
 \begin{equation}\label{form2-palind}
\left|\alpha^2-\frac{A_{2u_n-1}}{B_{2u_n}}\right|_p < \max\left(\frac 1 {|B_{2u_n}|_p^2}, \frac 1 {|B_{2u_n}B_{2u_n-1}|_p}\right) \leq \frac{1}{|B_{2u_n}B_{2u_n-1}|_p}.
\end{equation}

We apply Theorem \ref{thm:subspace-appl} with $N=3$ and the linear forms \begin{align*}L_{1,p}'(X_1,X_2,X_3)&=X_1\alpha-X_2,
\\ L_{2,p}'(X_1,X_2,X_3)&=X_1\alpha^2-X_3,\\
L_{3,p}'(X_1,X_2,X_3)&=X_2\alpha-X_3,\end{align*}
and $L_{i,\infty}'(X_1,X_2,X_3)=X_i$ for every $1\leq i\leq 3$.
Evaluating them at the points $\mathbf{z}_n'=(B_{2u_n}, A_{2u_n}, A_{2u_n-1})$, by \eqref{form1-palind}, \eqref{form1bis-palind} and \eqref{form2-palind} we get
\[\prod_{i=1}^3 |L_{i,p}'(\mathbf{z}_n')|_p |L_{i,\infty}'(\mathbf{z}_n')|_{\infty}\leq \frac{C_{\infty}^{6u_n}}{|B_{2u_n-1}^3|_p}\ll\left(\frac{C_{\infty}}{p^{k}}\right)^{6u_n}.\]
Moreover \[H(\mathbf{z}_n')\leq (C_{\infty} C_p)^{2 u_n}.\]
As by the definition of $k$ in Theorem \ref{main-thm-fiori} we have  $k> \frac{\log C_{\infty}}{\log p}$, Theorem \ref{thm:subspace-appl} ensures there exist $x_1,x_2,x_3\in \mathbb{Z}$ not all zero and an infinite subsequence of positive integers $n$ such that \begin{equation}\label{subspace-fiori-1}
    x_1 B_{2u_n}+x_2 A_{2u_n}+x_3 A_{2u_n-1}=0\end{equation} for all $n$.  
Dividing \eqref{subspace-fiori-1} by $B_{2u_n}$ we have 
\[x_1 +x_2 \frac{A_{2u_n}}{B_{2u_n}}+x_3 \frac{A_{2u_n-1}}{B_{2u_n-1}}\frac{B_{2u_n-1}}{B_{2u_n}}=0.\]
Since by \eqref{A_2un=B_2un-1} $A_{2u_n}=B_{2u_n-1}$, letting $n$ go to infinity, we get
\[x_1+x_2 \alpha+x_3 \alpha^2=0\]
contradicting the hypothesis that $\alpha$ is algebraic of degree at least $3$.

\subsubsection*{Case (2):  the set $\{n\geq 1\mid w_n=v_n=0\}$ is finite and the set $\{n\geq 1\mid w_n=0\}$ is infinite.}
Without loss of generality we might assume that, up to passing to a subsequence, for all $n$, $v_n\neq 0$, $w_n=0$ and that, by enlarging the words $U_n$, the first and the last letters of the word $V_n$ are always different (if not, it means that we are in Case (1)). So, in particular, the word $U_n V_n \widetilde{U_n}$ is never palindromic.

Notice first that, by Lemma~\ref{lem:palindromeCF}, \[\frac{B_{n-1}}{B_n}=[0, a_n,a_{n-1},\ldots,a_1]_s.\]
In particular 
\[\frac{B_{\ell_n+u_n-1}}{B_{\ell_n+u_n}}=[0,U_n\widetilde{V_n}\, \widetilde{U_n}]_s.\]
On the other hand 
\[\frac{A_{\ell_n+u_n}}{B_{\ell_n+u_n}}=[0,U_n{V_n}\widetilde{U_n}]_s.\]
Thus the continued fraction expansions of the numbers $\alpha$ and ${B_{\ell_n+u_n-1}}/{B_{\ell_n+u_n}}$ share the first $u_n$ terms, hence by Lemma \ref{lemma2-nostro} we have
\begin{equation}\label{condiz-bound-B_n-clubs}
\left|B_{\ell_n+u_n}\alpha-B_{\ell_n+u_n-1}\right|_p\leq |B_{\ell_n+u_n}|_p|B_{u_n}|_p^{-2}.
\end{equation}

We now apply Theorem \ref{thm:subspace-appl} with $N=4$ and the linear forms 
\begin{align*}
L_{1,p}(X_1,X_2,X_3,X_4)&=\alpha X_1-X_3, \\
L_{2,p}(X_1,X_2,X_3,X_4)&=\alpha X_2-X_4, \\
L_{3,p}(X_1,X_2,X_3,X_4)&=\alpha X_1-X_2, \\
L_{4,p}(X_1,X_2,X_3,X_4)&=X_2, 
\end{align*}
{and} 
\[
L_{i,\infty}(X_1,X_2,X_3,X_4)=X_i,
\]
for  $1\leq i\leq 4$. Evaluating them at the points \[\mathbf{z}_n=(B_{\sn}, B_{\sn-1}, A_{\sn}, A_{\sn-1}),\]  by hypothesis \ref{archim-A_n-e-B_n^1/n-is-bounded-clubs} of Theorem \ref{main-thm-fiori} and inequalities \eqref{eq-alpha-parti-quot}  and \eqref{condiz-bound-B_n-clubs}, we find that
\begin{equation*}
 \prod_{1\leq i\leq 4} |L_{i,p}(\mathbf{z}_n)|_{p} |L_{i,\infty}(\mathbf{z}_n)|_{\infty}\leq \frac{C_{\infty}^{4(\sn)}}{|B_{u_n}|_p^2}\leq \left(\frac{C_{\infty}^{4(2+c)}}{p^{2k}}\right)^{u_n}.
\end{equation*}
Notice now that \[H(\mathbf{z}_n)\leq (C_{\infty} C_p)^{(2+c)u_n}.\]
But then, since by the definition of $k$ in Theorem \ref{main-thm-fiori} \[k> (4+2c)\frac{\log C_{\infty}}{\log p},\]
Theorem \ref{thm:subspace-appl} ensures that there exist $x_1,x_2,x_3,x_4\in \mathbb{Z}$ not all zero and an infinite subsequence of positive integers $n$ such that, for all $n$, we have \begin{equation}\label{subspace-fiori-2-prima}
x_1 B_{\sn}+x_2 B_{\sn-1}+x_3 A_{\sn}+x_4 A_{\sn-1}=0.
\end{equation}
 Dividing by $B_{\sn}$ we obtain
\begin{equation}\label{subspace-fiori-2}
    x_1+x_2 \frac{B_{\sn-1}}{B_{\sn}}+x_3 \frac{A_{\sn}}{B_{\sn}}+x_4 \frac{A_{\sn-1}}{B_{\sn-1}}\cdot \frac{B_{\sn-1}}{B_{\sn}}=0.
\end{equation} Recalling that by \eqref{condiz-bound-B_n-clubs} we have
\begin{equation*}\label{condiz-bound-B_n-clubs-2}\lim_{n\rightarrow \infty}\frac{B_{\sn-1}}{B_{\sn}}=\alpha,
\end{equation*} 
letting $n$ tend to infinity in equation \eqref{subspace-fiori-2},  we get
\[x_1+(x_2+x_3)\alpha+x_4 \alpha^2=0.\]
Since $\alpha$ is not quadratic, we obtain $x_1=x_4=0$ and $x_2=-x_3$, hence, from \eqref{subspace-fiori-2-prima} we have \[A_{\sn}=B_{\sn-1}.\]
But then the matrix 
\[\begin{pmatrix}
B_{\sn}& B_{\sn-1}\\
A_{\sn}& A_{\sn-1}
\end{pmatrix}=
\begin{pmatrix}
a_{1}& 1\\
1& 0
\end{pmatrix}\begin{pmatrix}
a_{2}& 1\\
1& 0
\end{pmatrix}\cdots \begin{pmatrix}
a_{\sn}& 1\\
1& 0
\end{pmatrix}\]
is symmetric. Hence, by the uniqueness of the continued fraction expansion for a given floor function,   the word $a_1\cdots a_{\sn}=U_n V_n \widetilde{U_n}$ is palindromic, contradicting our assumption. 

\subsubsection*{Case (3): the set $\{n\geq 1\mid w_n= 0\}$ is finite.} 
Let
\begin{equation*}\label{A(),B()}A(t_{0},\ldots, t_m),\quad\quad B(t_{0},\dots, t_m)
\end{equation*}
be the functions defined by the continuants recurrences \eqref{def-continuants}, that is 
\[A({t_{0}})=t_0,\ A(t_0, t_1)=t_0t_1+1,\ A({t_{0}, \ldots, t_i})= t_{i} A(t_{0}, \ldots, t_{i-1})+A(t_{0}, \ldots, t_{i-2})\]
\[B({t_{0}})=1,\ B(t_{0}, t_1)=t_1,\ B({t_0, \ldots, t_i})= t_i B(t_0, \ldots, t_{i-1})+B(t_0, \ldots, t_{i-2}).\]

For any positive $n$, 
let 
\[P_n=A(0,W_n,U_n,V_n,\widetilde{U}_n,\widetilde{W}_n),\qquad Q_n=B(0,W_n,U_n,V_n,\widetilde{U}_n,\widetilde{W}_n)\]
and
\[P'_n=A(0,W_n,U_n,V_n,\widetilde{U}_n,\widetilde{W'_n})\qquad Q'_n=B(0,W_n,U_n,V_n,\widetilde{U}_n,\widetilde{W'_n})\]
where $W'_n$ denotes the word obtained by removing the first letter of $W_n$. We consider these as functions of $\ell_n+u_n+w_n$, respectively $\ell_n+u_n+w_n-1$, parameters.

Observe that
\begin{equation*}\label{eq:quasipal}
\frac{P_n}{Q_n}=[0,W_n,U_n,V_n,\widetilde{U}_n,\widetilde{W}_n]_s
\end{equation*}
and 
\begin{equation*}\label{eq:quasipal-primo}
\frac{P_n'}{Q_n'}=[0,W_n,U_n,V_n,\widetilde{U}_n,\widetilde{W'_n}]_s.
\end{equation*}
Notice that the continued fraction expansions of both $P_n/Q_n$ and $P_n'/Q_n'$ have the first $\ell_n+u_n$ partial quotients in common with $\alpha$. Therefore, by Section \ref{sect:continuants}, we have
\begin{align*}
\begin{split}
    \begin{pmatrix}
        Q_n & Q'_n\\
        P_n & P'_n
    \end{pmatrix}
        &=\begin{pmatrix}
        B_{\ell_n+u_n} & B_{\ell_n+u_n -1}\\
        A_{\ell_n+u_n} & A_{\ell_n+u_n-1}
    \end{pmatrix}\begin{pmatrix}
        B_{w_n} & A_{w_n}\\
        B_{w_{n}-1} & A_{w_n-1}\end{pmatrix},
        \end{split}
\end{align*}
hence 
\begin{equation}
\label{c}
    \begin{split}
        Q_n=B_{\ell_n+u_n} B_{w_n}+ B_{\ell_n+u_n -1}B_{w_{n}-1},\\
Q_n'=B_{\ell_n+u_n} A_{w_n}+ B_{\ell_n+u_n -1}A_{w_{n}-1},\\
P_n=A_{\ell_n+u_n} B_{w_n}+ A_{\ell_n+u_n -1}B_{w_{n}-1},\\
P_n'=A_{\ell_n+u_n} A_{w_n}+ A_{\ell_n+u_n -1}A_{w_{n}-1}.
    \end{split}
\end{equation}
Again, since $\alpha$, $P_n/Q_n$ and $P'_n/Q'_n$ share the same first $\tn$ partial quotients, from Lemma~\ref{lemma2-nostro} we have
\begin{equation}
\label{eqn:ab10}
\left|Q_n\alpha - P_n\right|_p\leq \left|\frac{Q_n}{B_{\ell_n+u_n}^2}\right|_p\leq \left|\frac{B_{\ell_n+u_n} B_{w_n}}{B_{\ell_n+u_n}^2}\right|_p=\left|\frac{ B_{w_n}}{B_{\ell_n+u_n}}\right|_p,  
\end{equation}
\begin{equation}
\label{eqn:ab11}
|Q'_n\alpha-P'_n|_p\leq \left|\frac{B_{\ell_n+u_n} A_{w_n}}{B_{\ell_n+u_n}^2}\right|_p=\left|\frac{ A_{w_n}}{B_{\ell_n+u_n}}\right|_p\leq \left|\frac{ B_{w_n}}{B_{\ell_n+u_n}}\right|_p.  
\end{equation}
Notice also that, by Lemma~\ref{lem:palindromeCF},
    \[\frac{Q'_n}{Q_n}=[0,W_n,U_n,\widetilde{V_n},\widetilde{U_n},\widetilde{W_n}]_s.\]
    It follows from Lemma \ref{lemma2-nostro} that
\begin{equation}
    \label{eqn:ab12}
    |Q_n\alpha-Q'_n|_p<\left |\frac{Q_n}{B_{w_n+u_n}^2}\right |_p\leq \left |\frac{B_{\ell_n+u_n} B_{w_n}}{B_{w_n+u_n}^2}\right |_p
\end{equation}
showing, in particular, that
\begin{equation*}
\lim_{n\rightarrow \infty} \frac{Q'_n}{Q_n}=\alpha.
\end{equation*}

Furthermore, by  \eqref{c},
\begin{equation}\label{eq:quattro}
|Q'_n|_p\leq |B_{\ell_n+u_n} A_{w_n}|_p.
\end{equation}

    We now apply Theorem \ref{thm:subspace-appl} with $N=4$ and  the linear forms 
\begin{align*}L_{1,p}''(X_1,X_2,X_3,X_4)=&\alpha X_1-X_3,\\
L_{2,p}''(X_1,X_2,X_3,X_4)=&\alpha X_2-X_4,\\
L_{3,p}''(X_1,X_2,X_3,X_4)=&\alpha X_1-X_2,\\
L_{4,p}''(X_1,X_2,X_3,X_4)=&X_2, \end{align*}
and  
\[L_{i,\infty}''(X_1,X_2,X_3,X_4)=X_i\]
for  $1\leq i\leq 4$. Evaluating such forms at the points \[\mathbf{z}_n''=(Q_n, Q'_n,P_n,P'_n)\]
it follows from \eqref{eqn:ab10},\eqref{eqn:ab11},\eqref{eqn:ab12} and \eqref{eq:quattro} that
\begin{equation*}\label{eq:stimap}
\prod_{1 \leq j\leq 4} |L_{i,p}''(\mathbf{z}_n'')|_p\leq \left |\frac  {B_{w_n}^4}{B_{w_n+u_n}^2}\right |_p.
\end{equation*}

Moreover, by~\eqref{c}, we have
\begin{equation}\label{eq:stimainfty} \max\left( |Q_n|_\infty,|Q'_n|_\infty,|P_n|_\infty,|P'_n|_\infty\right)\leq 2 C_\infty^{2\ell_n-v_n} \ll C_\infty^{(3c+2)u_n}\end{equation}
and \begin{equation}\label{eq:stima-p} \max\left( |Q_n|_p,|Q'_n|_p,|P_n|_p,|P'_n|_p\right) \ll C_p^{(3c+2)u_n}.\end{equation}

Since 
\begin{align*}\alpha^2 Q_n-\alpha Q'_n-\alpha P_n+P'_n &= \alpha(Q_n\alpha-P_n)-(Q'_n\alpha-P'_n)\\&= \alpha Q_n\left(\alpha-\frac {P_n}{Q_n}\right)-Q'_n\left(\alpha-\frac {P'_n}{Q'_n}\right)\\
&= (\alpha Q_n-Q'_n)\left(\alpha-\frac {P_n}{Q_n} \right)+Q'_n\left(\frac {P'_n}{Q'_n}-\frac {P_n}{Q_n}\right)\end{align*}
it follows from  \eqref{c}, \eqref{eqn:ab10}, \eqref{eqn:ab11} and \eqref{eqn:ab12} that
\begin{equation}\label{eq:Bugeaud5.4}\begin{split}
    |\alpha^2 Q_n-\alpha Q'_n-\alpha P_n+P'_n|_p &\leq \max\left( \left|\frac{B_{w_n}}{B_{w_n+u_n}^2B_{\ell_n+u_n}}\right |_p, \left |\frac{1}{B_{\ell_n+u_n}B_{w_n}}\right |_p\right) \\
    &=\left | \frac{1}{B_{\ell_n+u_n}B_{w_n}}\right |_p= \frac 1 {|Q_n|_p}.
\end{split}\end{equation}
We now also consider the linear form 
\[L_{5,p}''(X_1,X_2,X_3,X_4)=\alpha^2X_1-\alpha X_2-\alpha X_3+X_4.\]
From \eqref{eqn:ab11}, \eqref{eqn:ab12}, \eqref{eq:quattro}, and \eqref{eq:Bugeaud5.4}, we deduce that
\begin{equation}\label{eq:uffa2} 
\prod_{j=2}^5|L_{j,p}''(\mathbf{z}_n'')|_p \leq \left |\frac{B_{w_n}^2}{B_{w_n+u_n}^2}
\right |_p
\leq \frac 1 {\prod_{j=w_n+1}^{w_n+u_n}|a_j|^2} \leq \frac 1 {p^{2ku_n}}.
\end{equation}
From \eqref{c}, \eqref{eq:stimainfty}, \eqref{eq:stima-p} and \eqref{eq:uffa2} we get
\begin{equation*}\label{eq:uffa3} 
\prod_{j=1}^4|L_{j,\infty}''(\mathbf{z}_n'')|_\infty  \prod_{j=2}^5|L_{j,p}''(\mathbf{z}_n'')|_p  \leq \frac {2^4C_\infty^{4(2\ell_n-v_n)}}{p^{2ku_n}}
\leq 2^4 \left (\frac {C_\infty^{4(3c+2)}}{p^{2k}}\right)^{u_n}
\end{equation*}
and
\begin{equation*}H(\mathbf{z}_n'')  \ll 
(C_pC_\infty)^{(3c+2)u_n}. \end{equation*}
Therefore, since by the definition of $k$ in Theorem \ref{main-thm-fiori}
\begin{equation*}
k > (4+6c)\cdot \frac{\log C_\infty}{\log p},
\end{equation*}
  we conclude by Theorem~\ref{thm:subspace-appl} that there exist $t_1,t_2,t_3,t_4 \in \ZZ$ not all zero and an infinite subsequence of positive integers $n$ such that 
\begin{equation}\label{sottospazio-fiori-3}
    t_1 Q_n + t_2 Q'_n + t_3 P_n + t_4 P'_n=0
\end{equation}
for all $n$. Dividing by $Q_n$ and
noting that
\begin{equation}\label{limit-quotients-to-alpha}
    \lim_{n\to\infty}\frac {Q'_n}{Q_n}=\lim_{n\to\infty}\frac {P_n}{Q_n}=\lim_{n\to\infty}\frac {P'_n}{Q'_n}=\alpha,
\end{equation}
by letting $n$ go to infinity we obtain \[t_1+(t_2+t_3)\alpha+t_4\alpha^2=0.\]
Since $\alpha$ is not quadratic by hypothesis, we get $t_1=t_4=0$ and $t_2=-t_3$, which, replaced in \eqref{sottospazio-fiori-3}, imply $Q'_n=P_n$ for infinitely many $n$.

By~\eqref{eq:Bugeaud5.4}, we then have, for infinitely many such $n$, 
\begin{equation*}
    |\alpha^2 Q_n - \alpha Q'_n - \alpha P_n + P_n'|_p=|\alpha^2 Q_n -2 \alpha Q'_n  + P'_n|_p\ll \frac{1}{|Q_n|_p}.
\end{equation*}
We finally apply again Theorem \ref{thm:subspace-appl} with $N=3$ and the linear forms
\begin{align*}
    L_{1,p}'''(X_1,X_2,X_3)&=\alpha^2 X_1 -2\alpha X_2 + X_3\\
    L_{2,p}'''(X_1,X_2,X_3)&=\alpha X_2 - X_3\\
    L_{3,p}'''(X_1,X_2,X_3)&=X_1\\
    L_{i,\infty}'''(X_1,X_2,X_3)&= X_i
\end{align*}
for all $1\leq i\leq 3$.
By evaluating them  at the points
\[\mathbf{z}_n'''=(Q_n, P_n,P'_n)\]
we find
\begin{equation*}\label{eq:uffa4} 
\prod_{j=1}^3|L_{j,\infty}'''(\mathbf{z}_n''')|_\infty \cdot |L_{j,p}'''(\mathbf{z}_n''')|_p  \leq \frac {2^3C_\infty^{3(2\ell_n-v_n)}}{p^{2ku_n}}\leq 2^3 \left (\frac {C_\infty^{3(3c+2)}}{p^{2k}}\right)^{u_n}
\end{equation*}
and \[H(\mathbf{z}_n''')\ll (C_pC_\infty)^{(3c+2)u_n}.\]
Therefore, since the definition of $k$ in Theorem \ref{main-thm-fiori} implies
\[ k> \frac 3 2 (3c+2)\frac {\log C_\infty}{\log p}\]
and~\eqref{eq:stimainfty} holds, we conclude by  Theorem \ref{thm:subspace-appl} that there exist  $b_1,b_2,b_3,b_4\in \ZZ$ not all zero and an infinite subsequence of positive integers $n$ such that
\[b_1 Q_n + b_2 Q'_n + b_3 P'_n=0\]
for all $n$. Diving by $Q_n$, using \eqref{limit-quotients-to-alpha} and letting $n$ go to infinity we obtain $b_1+b_2\alpha+b_3\alpha^2=0$, contradicting the assumption that $\alpha$ is neither rational nor quadratic.
\qed

\section*{Acknowledgments}
The authors warmly thank the anonymous referee for their exceptionally careful reading of the manuscript and for their detailed comments, which have significantly improved the clarity, precision, and overall quality of this work. They are also deeply grateful to Yann Bugeaud for many enlightening discussions and for his precious comments on our work during his visit to the Dipartimento di Matematica G. Peano in Turin.

The authors also all acknowledge the IEA (International Emerging Action) project PAriAlPP (Problèmes sur l’Arithmétique et l’Algèbre des Petits Points) of the CNRS for the financial support.

Laura Capuano, Marzio Mula, and Lea Terracini are members of the INdAM group GNSAGA.


\appendix

\section{Pokering words: the \texorpdfstring{$\spadesuit$}{♠} and \texorpdfstring{$\clubsuit$}{♣} conditions}\label{section-2-picche-fiori-esempi}\label{app-A}

In this appendix we collect more results on $\spadesuit$ and $\clubsuit$ words and their combinatorial properties. In particular, we study how such words are related to other famous classes of words, such as automatic and Sturmian words. Moreover, we show how Corollary~\ref{corollario-automatiche} can be specialized to these classes of words. 

\subsection{Link to automatic words}
\label{subsec:automatic}
We briefly recall that, if $k\geq 2$ is an integer,  given a set $\mathcal{A}$, a sequence $(a_n)_{n \geq 1}$ with $ a_n\in \mathcal{A}$ is called \emph{$k$-automatic} if, for every $n$, the term $a_n$ can be computed by a finite state machine (or \emph{automaton}). This machine takes in input the $k$-ary digits of $n$, starting from an initial state. Each digit of $n$ corresponds to a state transition, and each state has an associated output. The output of the final state reached after reading all digits of $n$ determines the value of $a_n$. 
For further details on automata theory, we refer to \cite{alloucheShallit2003:automaticSequences}.

We can extend the notion of automaticity to words, by saying that a word $w=a_1a_2\cdots$ is \emph{$k$-automatic} if and only if the sequence $(a_n)_n$ is so.

As a finite automaton has only finitely many possible states and so finitely many possible outputs, any word taking on infinitely many different values is clearly not automatic. However, the finiteness of the values alone
is not enough to ensure automaticity. An example of a word on a finite alphabet which is not $k$-automatic for any $k\geq 2$ is the \emph{Fibonacci word}
\[\omega=01001010010010100101001010010100\cdots\]
whose letters are given by the sequence $(a_n)_{n\geq 1}$ where $a_n=2+\lfloor n\varphi \rfloor -\lfloor (n+1)\varphi \rfloor$
where $\varphi$ is the golden ratio. The Fibonacci word and, more generally, Sturmian words (see Section~\ref{sec:sturm}), are not $k$-automatic~\cite[§1]{lucasEtAl:cobham}.

A result by Cobham~\cite{cobham68:tagMachinesComplexity} shows that, for any automatic word $\omega$, there exists a constant $C>0$ such that $p_{\omega}(n)\leq Cn$ for all $n\geq 1$. This, together with Theorem \ref{bugeaud-complexity-picche} gives:
\begin{proposition}
Every automatic word satisfies condition $\spadesuit$.    
\end{proposition}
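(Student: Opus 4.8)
The plan is to deduce this immediately from the two results already quoted in the excerpt. By Cobham's theorem \cite{cobham68:tagMachinesComplexity}, for an automatic word $\omega$ there is a constant $C>0$ with $p_\omega(n)\leq Cn$ for all $n\geq 1$. An automatic word on a finite alphabet is, moreover, either ultimately periodic or not; if it is ultimately periodic then it trivially fails to be a candidate, so the only interesting case is when $\omega$ is not ultimately periodic. In that case Theorem~\ref{bugeaud-complexity-picche}, applied with $\mathcal{N}=\mathbb{N}$ and the constant $C$ furnished by Cobham, shows that $\omega$ satisfies condition $\spadesuit$ with constant $c=3C+1$. This is the whole argument.

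The one genuine subtlety to address is the ultimately periodic case, since Definition~\ref{def-spade-club-with-constant} explicitly excludes ultimately periodic words from having property $\spadesuit$. Strictly speaking, then, the statement ``every automatic word satisfies condition $\spadesuit$'' is true only for automatic words that are \emph{not} ultimately periodic; I would either phrase the proof to make this caveat explicit, or note that in the application (Question~\ref{question-intro} and the transcendence statements) ultimately periodic words are already understood — they correspond to rational or quadratic numbers — so restricting to the non-ultimately-periodic case is harmless. The cleanest write-up states: if $\omega$ is ultimately periodic there is nothing to prove (for the purposes of the transcendence dichotomy); otherwise apply Cobham + Theorem~\ref{bugeaud-complexity-picche}.

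There is really no hard step here: the content is entirely in Cobham's linear-complexity bound and in Bugeaud's Theorem~\ref{bugeaud-complexity-picche}, both of which we are allowed to assume. The only thing to be careful about is not to overstate the conclusion — condition $\spadesuit$ as defined requires the word to be non-ultimately-periodic, so one must either phrase the proposition with that implicit understanding or handle the periodic case separately as above. I would write:

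\begin{proof}
If $\omega$ is ultimately periodic, there is nothing to prove. Otherwise, by Cobham's theorem~\cite{cobham68:tagMachinesComplexity} there exists a constant $C>0$ such that $p_\omega(n)\leq Cn$ for all $n\geq 1$. Applying Theorem~\ref{bugeaud-complexity-picche} with $\mathcal{N}=\mathbb{N}$, we conclude that $\omega$ satisfies condition $\spadesuit$ with constant $c=3C+1$.
\end{proof}
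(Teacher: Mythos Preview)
Your proposal is correct and follows exactly the same route as the paper: invoke Cobham's linear-complexity bound for automatic words and then apply Theorem~\ref{bugeaud-complexity-picche}. You are in fact slightly more careful than the paper, which does not explicitly address the ultimately periodic case that the definition of $\spadesuit$ excludes.
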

We now discuss some examples of automatic words.    
\subsubsection*{Thue-Morse words.}
One of the most classical and simplest examples of a $2$-automatic word is the Thue-Morse word mentioned earlier. Put $\mathcal{A}=\{a,b\}$. The \emph{Thue-Morse seqeunce on the alphabet $\mathcal{A}$} is the sequence $(a_n)_{n\geq 0}$ where $a_n=a$ if the number of $1$ in the binary writing of $n$ is even and $a_n=b$ otherwise. 
It is produced by the following automaton which has 2 states, where the arrows indicate the transition functions depending on the lecture of a binary digit of $n$.
\begin{center}
\begin{tikzpicture}[shorten >=1pt, node distance=2cm, on grid, auto] 
   \node (q_0)   {\boxed{a}}; 
   \node (q_1) [right=of q_0] {\boxed{b}}; 
    \path[->] 
    (q_0) edge[loop above] node {0} ()
          edge[bend left]  node {1} (q_1)
    (q_1) edge[loop above] node {0} ()
          edge[bend left]  node {1} (q_0);
\end{tikzpicture}
\end{center}
The corresponding \emph{Thue-Morse word} is
\[w=abbabaabbaababbabaababba\cdots.\] We have the following:
\begin{proposition}\label{prop:TM_is_spadesuit}
Thue-Morse words  satisfy:
\begin{enumerate}[label=(\roman*)]
\item\label{it1-Thue-Morse} condition $\spadesuit$ with constant $c=2$;
\item\label{it2-Thue-Morse} condition $\clubsuit$ with constant $c=0$.     
\end{enumerate}
\end{proposition}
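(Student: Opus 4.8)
The plan is to exhibit the three sequences $(U_n)_n$, $(V_n)_n$, $(W_n)_n$ of Definition~\ref{def-spade-club-with-constant} directly for the Thue-Morse word $w$. First I would record the substitution structure: $w$ is the fixed point of $\sigma\colon a\mapsto ab$, $b\mapsto ba$, so $w=\sigma^k(w)$ for all $k$, and in particular the prefix of $w$ of length $2^k$ is $\sigma^k(a)$. Writing $P_k=\sigma^k(a)$ and $Q_k=\sigma^k(b)$, one has $P_{k+1}=P_kQ_k$, $Q_{k+1}=Q_kP_k$, $|P_k|=|Q_k|=2^k$, and $Q_k=\overline{P_k}$ in the sense of letterwise complementation. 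Two classical combinatorial facts will do the work: (a) $P_k$ and $Q_k$ differ only in their last letter (indeed $P_k = R_k x$, $Q_k = R_k y$ with $\{x,y\}=\{a,b\}$ and $R_k$ a common prefix of length $2^k-1$); and (b) $Q_k$ is \emph{not} a palindrome but $\widetilde{P_k}$ and $Q_k$ agree on a long prefix — more precisely $\widetilde{P_{k+1}}=\widetilde{Q_k}\widetilde{P_k}$.

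For \ref{it1-Thue-Morse} (condition $\spadesuit$ with $c=2$): I would look at the prefix of $w$ of length $2^{k+2}$, namely $P_{k+2}=P_{k+1}Q_{k+1}=P_kQ_kQ_kP_k$. Since $P_k$ and $Q_k$ share the prefix $R_k$ of length $2^k-1$, the word $P_kQ_kQ_kR_k$ is a prefix of $w$, and this has the shape $W\,U\,V\,U$ with $U=R_k$ (length $2^k-1$), $W$ equal to the first letter of $P_k$ followed by... — more cleanly, take $U_k=R_k$, and absorb the discrepancy letters into $V_k$ and $W_k$. Concretely: $P_kQ_kQ_kR_k = (x)(R_k)(yR_ky)(R_k)$ where $x$ is the last letter of $P_k$ and $y$ that of $Q_k$... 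I would instead take $U_k = R_k$ with $|U_k| = 2^k - 1$, $W_k$ the single-letter prefix difference placed appropriately, and $V_k$ of length $\le 2^{k+1} \le 2|U_k| + 2$, and check $\max(|V_k|/|U_k|,|W_k|/|U_k|)\le 2$ for $k\ge 1$; since $|U_k|=2^k-1$ is unbounded, \ref{it1} and \ref{it3} hold. The bookkeeping of which single letters go where is the one genuinely fiddly point, but it is elementary.

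For \ref{it2-Thue-Morse} (condition $\clubsuit$ with $c=0$): $c=0$ forces $V_n$ and $W_n$ to be \emph{empty}, so I must show that $w$ itself begins with arbitrarily long words of the form $U\widetilde{U}$. Using $P_{2k} = \sigma^{2k}(a)$ and the identity $\widetilde{P_{k}} = \widetilde{Q_{k-1}}\widetilde{P_{k-1}}$ repeatedly, together with the complementation relation between $P$ and $Q$ and the parity of the exponent, one shows that some prefix $P_{2k}$ factors as $U_k\widetilde{U_k}$; the key is that $\sigma$ is "mirror-compatible" in the sense that $\widetilde{\sigma(u)}$ is $\sigma$ applied to $\widetilde u$ after swapping the two letters — combined with $w$ being invariant under the letter swap composed with a shift, this yields the palindromic-prefix factorization. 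I would set $U_k = P_{2k-1}$ (so $|U_k| = 2^{2k-1}$, unbounded) and verify $P_{2k} = U_k \widetilde{U_k}$ by induction on $k$. The hard part of the whole proposition is precisely this identification: getting the reversal and complementation to line up so that a genuine prefix of $w$ (not merely a factor) has the form $U\widetilde U$, with \emph{no} correction words allowed. Once the right $U_k$ is pinned down, the verification is a short induction using $P_{k+1}=P_kQ_k$ and $\widetilde{P_{k+1}}=\widetilde{Q_k}\,\widetilde{P_k}$.
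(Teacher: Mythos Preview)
Your claim (a) is false: $P_k$ and $Q_k=\sigma^k(b)$ are \emph{letterwise complements} of each other, so they differ in every position, not just the last (already $P_1=ab$, $Q_1=ba$). Consequently the common prefix $R_k$ you introduce is empty for all $k\ge 1$, and the whole bookkeeping with single discrepancy letters collapses; there is no way to get $|U_k|=2^k-1$ out of this. The irony is that you had already written down everything you need: the decomposition $P_{k+2}=P_kQ_kQ_kP_k$ is itself of the shape $U\,V\,U$ with $U=P_k$, $V=Q_kQ_k$ and $W$ empty, giving $|V|/|U|=2$ and $|W|/|U|=0$. That is exactly the paper's argument for \ref{it1-Thue-Morse}, and it is one line.

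For \ref{it2-Thue-Morse} your plan is correct and in fact more explicit than the paper, which simply cites \cite{adamBug2007:thueMorse} for the fact that $w$ begins with arbitrarily long blocks $U\widetilde U$. Your choice $U_k=P_{2k-1}$ works: the identity $P_{2k}=P_{2k-1}\widetilde{P_{2k-1}}$ is equivalent to $\widetilde{P_{2k-1}}=Q_{2k-1}$, and one proves by induction the pair of statements $\widetilde{P_m}=P_m$ for $m$ even, $\widetilde{P_m}=Q_m$ for $m$ odd (using $\widetilde{P_{m+1}}=\widetilde{Q_m}\,\widetilde{P_m}$ together with the analogous relations for $Q$). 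So the induction you outline goes through cleanly once you drop the reliance on (a).
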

\begin{proof}
Let $w$ be a Thue-Morse word.
    Notice that, for each $n$, any initial block of $\omega$ of length $2^n$ has the form $U_nV_nU_n$ with $|V_n|=2|U_n|=2^{n-1}$. This proves \ref{it1-Thue-Morse}. As for \ref{it2-Thue-Morse}, as remarked in \cite{adamBug2007:thueMorse}, denoting by $U_n$ the word consisting of the first $2^n$ letters of $\omega$, then $U_n\widetilde{U_n}$ is a prefix of $\omega$ for every $n$. Hence $\omega$ satisfies condition $\clubsuit$ with $|W_n|=|V_n|=0$.
\end{proof}
\subsubsection*{Rudin-Shapiro words.} Put $\mathcal{A}=\{a,b\}$. The \emph{Rudin-Shapiro sequence on the alphabet $\mathcal{A}$} is the sequence  $(a_n)_{n \geq 0}$ defined as: for every $n\geq 1$ in which case $a_n=a$ if the number of occurrences (possibly overlapping) of the block $11$ in the base-$2$ expansion of $n$ is even, or $a_n=b$ otherwise.

The corresponding \emph{Rudin-Shapiro word} is 
\[\omega=a a a b a a b a a a a b b b a b a a a b a a b a b b b a a a b a a a a b a a  \cdots\]
and it is $2$-automatic.
\begin{proposition}
\label{prop:RSconst}
    Rudin-Shapiro words satisfy condition $\spadesuit$ with constant $c=25$. 
\end{proposition}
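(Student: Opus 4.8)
The plan is to exhibit explicit sequences $(U_n)_n, (V_n)_n, (W_n)_n$ witnessing condition $\spadesuit$ for the Rudin-Shapiro word $\omega$, relying on its self-similar (substitutive) structure rather than on Theorem~\ref{bugeaud-complexity-picche}. Although one could invoke the fact that $\omega$ is automatic, hence of linear complexity, hence $\spadesuit$, that route would only give \emph{some} constant $c$; to pin down $c=25$ one needs a hands-on combinatorial argument. First I would recall the standard description of the Rudin-Shapiro word as the image under a letter-to-letter coding of the fixed point of a uniform substitution of length $2$ on a four-letter alphabet $\{A,B,C,D\}$, say $A\mapsto AB$, $B\mapsto AC$, $C\mapsto DB$, $D\mapsto DC$, with $\omega$ obtained by sending $A,B\mapsto a$ and $C,D\mapsto b$. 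The key structural fact is that every length-$2^n$ prefix of the four-letter fixed point is the image of a length-$2^{n-1}$ prefix under the substitution, so that prefixes at consecutive dyadic scales are nested in a controlled way.

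The second step is to locate, inside a suitable prefix, a block of the form $W_n U_n V_n U_n$ with the ratios $|V_n|/|U_n|$ and $|W_n|/|U_n|$ bounded by $25$ and $|U_n|\to\infty$. The natural candidate is to take $U_n$ of length $2^n$ (or a fixed multiple thereof): one looks for two occurrences of the same length-$2^n$ factor in a prefix of length $O(2^n)$, with the gap between them and the preceding prefix also $O(2^n)$. Concretely, using the substitution one shows that some fixed finite word $P$ on $\{A,B,C,D\}$ has the property that $\sigma^n(P)$ contains two occurrences of $\sigma^n(Q)$ for a fixed subword $Q$ of $P$, with all the relevant length ratios bounded by the corresponding ratios for $P$ and $Q$ at level $0$; since substitution multiplies all lengths by $2^n$, these ratios are scale-invariant, and the constant $c=25$ comes from checking the base case by direct inspection of a short prefix of $\omega$ (long enough to see the first genuine repetition of the required shape). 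One then verifies (i), (ii) and (iii) of Definition~\ref{def-spade-club-with-constant} directly from this construction, noting that $\omega$ is not ultimately periodic (e.g.\ because its complexity is unbounded, or by a classical property of the Rudin-Shapiro sequence).

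I expect the main obstacle to be the bookkeeping in step two: identifying the precise short prefix of $\omega$ in which the first repeated block of the right shape occurs, and checking that the ratio $|W_n|/|U_n|$ — which measures how far from the start of $\omega$ the repetition first appears, relative to the period length — is genuinely bounded by $25$ and not something slightly larger. This is a finite but slightly delicate computation, and the value $25$ presumably reflects an optimization (or at least a convenient overestimate) over the possible choices of $U_n$; getting it right amounts to writing out enough letters of the Rudin-Shapiro word and of the level-$n$ substitution images to exhibit one explicit witnessing triple, after which the general $n$ follows by applying $\sigma^{n-1}$ and using length-multiplicativity. The non-ultimate-periodicity and the unboundedness of $(|U_n|)_n$ are immediate once the construction is in place.
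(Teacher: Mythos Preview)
Your dismissal of the complexity route is mistaken, and this is the real gap. You write that invoking linear complexity ``would only give \emph{some} constant $c$'', but Theorem~\ref{bugeaud-complexity-picche} is explicit: if $p_\omega(n)\le Cn$ then $\omega$ is $\spadesuit$ with constant $c=3C+1$. The paper's proof is exactly this two-line argument: cite the known formula $p_\omega(n)=8(n-1)$ for $n\ge 8$ (so $C=8$ works), and read off $c=3\cdot 8+1=25$. The value $25$ is not an intrinsic combinatorial feature of the Rudin--Shapiro word that a direct construction would be expected to recover; it is an artifact of the formula $3C+1$.

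Your proposed substitutive approach could in principle produce \emph{some} $\spadesuit$ witness, but as written it is only a plan, and the expectation that the base-case inspection will hand you the ratio bound $25$ is unsupported: there is no reason the first repeated block in a short prefix should sit at distance exactly $25$ times its length from the origin. You would more likely obtain a different (and quite possibly smaller) constant, which would prove a stronger statement than the one asked for but would not match the proposition as stated. In short, the intended proof is the one you rejected in your first paragraph.
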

\begin{proof}
    It has been proved that $p_{\omega}(n)=8(n-1)$ for every $n\geq 8$ (see  \cite{allouche-shallit-Rudin-shapiro} for a more general result). Thus the result follows from Theorem \ref{bugeaud-complexity-picche} with $C=8$.
\end{proof}
\subsubsection*{Paperfolding words.} Another classical example of $2$-automatic word is the \emph{paperfolding word}.
Put $\mathcal{A}=\{a,b\}$. The \emph{paperfolding sequence on the alphabet $\mathcal{A}$} is the sequence $(a_n)_n$ defined as it follows: for every $n\geq 1$, write $n=2^k m$ where $m$ is odd and set $a_n=a$ if $m\equiv 1 \pmod 4$ and $a_n=b$ otherwise. The corresponding \emph{paperfolding word} is \[w=a a b a a b b a a a b b a b b a a a b a a b b b a a b b a b b a a a b a a b b a a\cdots.\]  
\begin{proposition}\label{paper-folding}
    Paperfolding words satisfy condition $\spadesuit$ with constant $c=13$.
\end{proposition}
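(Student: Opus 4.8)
The proof of Proposition \ref{paper-folding} should follow the same template as Propositions \ref{prop:RSconst}: reduce the $\spadesuit$ claim to a complexity bound via Theorem \ref{bugeaud-complexity-picche}, then cite the known value of the complexity function of the paperfolding word. So the plan is as follows.

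\medskip

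\emph{Step 1: Recall the complexity of the paperfolding word.} It is a classical fact (due to Allouche, see \cite{alloucheShallit2003:automaticSequences} and the references therein) that the (regular) paperfolding word has complexity function $p_{\omega}(n) = 4n$ for every $n \geq 7$, with smaller values for $n \leq 6$. In particular there is a constant $C$ and an infinite set $\mathcal{N} \subset \mathbb{N}$ (indeed all of $\mathbb{N}$) such that $p_{\omega}(n) \leq Cn$ for all $n \in \mathcal{N}$; one may take $C = 4$.

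\medskip

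\emph{Step 2: Check that the paperfolding word is not ultimately periodic.} This is needed to apply Theorem \ref{bugeaud-complexity-picche}. It follows immediately from the unboundedness of $p_{\omega}$ (an ultimately periodic word has bounded complexity, by \cite[Thm.\,7.3]{morseHedlund:symbolicDyn1}), or alternatively from the fact that a $k$-automatic word which is ultimately periodic has strictly linear-with-slope-less-than-$1$, i.e.\ eventually constant, behaviour — in any case $p_\omega(n) = 4n \to \infty$ rules this out.

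\medskip

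\emph{Step 3: Apply Theorem \ref{bugeaud-complexity-picche}.} With $C = 4$, the theorem yields that $\omega$ satisfies condition $\spadesuit$ with constant $c = 3C + 1 = 13$, which is exactly the claim.

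\medskip

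The only genuine ingredient is the value of the complexity function in Step 1; everything else is a direct invocation of an already-stated theorem. So the ``main obstacle'' is merely to have the correct citation and constant for the paperfolding complexity — there is no real difficulty, the statement being an immediate corollary of Theorem \ref{bugeaud-complexity-picche} once the linear complexity bound $p_\omega(n) \leq 4n$ is in hand.
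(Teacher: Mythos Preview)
Your proposal is correct and follows exactly the paper's own proof: cite Allouche's result that $p_{\omega}(n)=4n$ for all $n\geq 7$, and then apply Theorem~\ref{bugeaud-complexity-picche} with $C=4$ to obtain $c=3C+1=13$. The only difference is cosmetic --- you make explicit the (easy) non-ultimate-periodicity check, which the paper leaves implicit.
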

\begin{proof}
    By a result of Allouche (see \cite{Allouche-paper-folding-2}) $p_{\omega}(n)=4n$ for all $n\geq 7$. Hence, the result follows by Theorem \ref{bugeaud-complexity-picche} with $C=4$.
\end{proof}
\begin{remark}
    To the authors' knowledge, it is not known whether Rudin-Shapiro and paperfolding words satisfy condition $\clubsuit$. However, if this is the case we must have that the sequence $(V_n)_n$ defining the $\clubsuit$ condition can contain only finitely many empty words. Indeed, as proved in \cite{Allouche-paper-folding}, the longest palindromes in a Rudin-Shapiro word have length 14 while they have length 13 in a paperfolding word.
\end{remark}

\subsection{Link to Sturmian words.}
\label{sec:sturm}

A word $\omega$ is said to be \emph{Sturmian} if $p_{\omega}(n)=n+1$ for all $n\geq 1$. Equivalently (see for instance \cite[Section 4.2]{ALLOUCHE200139}) a word $w=a_0a_1a_2\cdots$ is Sturmian if and only if there exists a set $\mathcal{A}=\{a,b\}$ such that the sequence  $(a_n)_{n \geq 0}$ satisfies, for some irrational $\alpha \in (0, 1)$ and some real number $\beta$, one of the following properties
\begin{enumerate}[label=(\roman*)]
\item for all $n\geq 1$, $a_n = a$ if $ \lfloor (n+1)\alpha + \beta \rfloor - \lfloor n\alpha + \beta \rfloor=0$ and $a_n=b$ otherwise;
\item for all $n\geq 1$, $a_n = a$ if $\lceil (n+1)\alpha + \beta \rceil - \lceil n\alpha + \beta \rceil=0$ and $a_n=b$ otherwise.
\end{enumerate}

A word is called \emph{characteristic Sturmian} if it satisfies the above conditions with $\beta=0$. A classical example of characteristic Sturmian word is the Fibonacci word introduced above.

\begin{proposition}
\label{prop:sturmianConst}
We have that:
    \begin{enumerate}[label=(\roman*)]
        \item\label{it1-sturmian}  Sturmian words satisfy condition $\spadesuit$ with constant $c=0$;
        \item\label{it2-sturmian} Characteristic Sturmian words satisfy condition $\clubsuit$ with constant $c=0$.
        \item \label{it3-sturmian} Sturmian words satisfy condition $\clubsuit$ with constant $c=2$.
    \end{enumerate}
\end{proposition}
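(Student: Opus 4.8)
The plan is to exploit the well-known structural description of Sturmian words in terms of their \emph{standard (palindromic) factorization}, together with the fact that every Sturmian word on $\{a,b\}$ has the same set of factors as the characteristic Sturmian word with the same slope. Concretely, I would first recall that a characteristic Sturmian word $c_\alpha$ is the limit of a sequence of finite \emph{standard words} $s_n$ obtained by the recursion $s_{-1}=b$, $s_0=a$, $s_{n+1}=s_n^{d_n}s_{n-1}$, where $[0;d_0+1,d_1,d_2,\dots]$ is the continued fraction expansion of the slope $\alpha$; each $s_n$ is a prefix of $s_{n+1}$ (for $n\geq 1$) and the lengths $|s_n|$ are unbounded. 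The classical key facts I would invoke are: (1) for $n\geq 1$, $s_n = p_n xy$ where $p_n$ is a palindrome and $xy\in\{ab,ba\}$, and in fact $p_n$ is a prefix of $c_\alpha$ of unbounded length; (2) consecutive standard words "almost commute": $s_{n}s_{n-1}$ and $s_{n-1}s_{n}$ differ only by swapping the last two letters, so $s_n s_{n-1}$ and $s_{n-1}s_n$ share a common prefix of length $|s_n|+|s_{n-1}|-2$; and (3) $s_n s_n$ is a prefix of $s_{n+2}$ when $d_{n+1}\geq 2$, while if infinitely many $d_n=1$ one uses instead that $s_{n+1}s_n$ has $s_ns_n$-type overlaps — in all cases $c_\alpha$ has prefixes of the form (palindrome)(something short)(palindrome) and prefixes with a long repeated block.

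For part \ref{it2-sturmian}, I would argue directly: since $c_\alpha$ has arbitrarily long palindromic prefixes $p_n$ with $|p_n|\to\infty$, write each such prefix as $U_n:=p_n$; then $U_n\widetilde{U_n}=U_nU_n$ is... not quite a prefix, so instead I would use the sharper fact that $p_{n+1}$ begins with $p_n$ followed by material so that $p_n q \widetilde{p_n}$ is a prefix for a short connecting word $q$ — more cleanly, since $p_n$ itself is a palindrome that is a prefix, taking $W_n=V_n=\emptyset$ and $U_n$ the first half of $p_{2n}$ gives $U_n\widetilde{U_n}=p_{2n}$ up to the two-letter discrepancy; the cleanest route is to take $U_n\widetilde{U_n}$ to be an even-length palindromic prefix directly, which exists because the bispecial factors of $c_\alpha$ give palindromic prefixes of every sufficiently large length of the appropriate parity. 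Thus $c_\alpha$ satisfies $\clubsuit$ with $c=0$ (i.e.\ $|W_n|=|V_n|=0$). For part \ref{it1-sturmian}, I use that $s_ns_n$ (when $d_{n+1}\geq 2$) or $s_{n+1}s_n$ (contains $s_ns_n$ up to a two-letter swap) occurs as a prefix of $c_\alpha$, hence of any Sturmian $\omega$ after a bounded shift; more precisely every Sturmian word, having the same factor set as $c_\alpha$, contains every standard word $s_n$, and one shows a prefix of the form $U_nU_n$ (with $U_n=s_n$, $|V_n|=0$) occurs — handling the $d_{n+1}=1$ case by replacing $U_n$ with $s_{n+1}$ and noting $s_{n+1}s_{n+1}$ still appears. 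This gives $\spadesuit$ with $c=0$. Finally \ref{it3-sturmian}: a general Sturmian word $\omega$ with parameter $\beta$ is, after removing a prefix, equal to a shift of $c_\alpha$; a prefix of length $\ell$ of $\omega$ is $W U_n V_n \widetilde{U_n}$ where $W$ is this bounded-length (in terms of $|U_n|$) "garbage" prefix before synchronization with $c_\alpha$ — but $|W|$ need not be $\leq c|U_n|$ for a fixed $c$. Instead I would take $U_n$ to be a long central palindrome of $\omega$ (every Sturmian word has arbitrarily long palindromic factors, by the factor-set argument), occurring at position $i_n$; then the prefix ending just after the mirrored copy has the form $W_n U_n V_n \widetilde{U_n}$ with $|W_n|,|V_n|$ controlled by the recurrence (bounded-gap) structure of Sturmian words, giving $|W_n|/|U_n|,|V_n|/|U_n|\to 0$, hence $\leq 2$ for all $n$; alternatively one cites \cite{AdamczewskiBugeaud2007:palindromic} where exactly this $c=2$ bound for Sturmian words is established.

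The main obstacle is part \ref{it3-sturmian}: for non-characteristic Sturmian words one does not have palindromic or repeated \emph{prefixes} for free, only palindromic and repeated \emph{factors}, and the $\spadesuit/\clubsuit$ definitions require $W_nU_nV_nU_n$ (resp.\ with $\widetilde{U_n}$) to be a genuine \emph{prefix} of $\omega$. So the crux is to promote an internal occurrence of a long palindrome (or repetition) to a prefix pattern while keeping the prefix $W_n$ short relative to $U_n$; this requires a uniform linear recurrence / bounded gap estimate for the return words of Sturmian words, ensuring that the first occurrence of a suitable long palindrome happens within distance $O(|U_n|)$ from the start — and pinning down the constant $2$ rather than some larger value. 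I expect the $c=0$ statements \ref{it1-sturmian}, \ref{it2-sturmian} to follow quite directly from the standard-word recursion as sketched, with all the work concentrated in the bookkeeping for \ref{it3-sturmian} and in the two-letter-swap technicalities when slope partial quotients equal $1$.
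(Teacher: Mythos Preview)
Your sketches for \ref{it1-sturmian} and \ref{it2-sturmian} are in the right spirit, but there is a real gap in \ref{it1-sturmian}. The standard-word recursion shows that the \emph{characteristic} Sturmian word $c_\alpha$ has arbitrarily long square prefixes, but your passage to an arbitrary Sturmian word $\omega$ (``after a bounded shift'', or via the common factor set) does not preserve the prefix property: sharing the factor set only tells you that $s_n s_n$ occurs \emph{somewhere} in $\omega$, not at position $0$, and any nonzero shift forces $W_n\neq\emptyset$, killing $c=0$. The statement that \emph{every} Sturmian word begins with arbitrarily long squares is true but needs its own argument; the paper handles this simply by citing \cite[Proposition~2]{ALLOUCHE200139}, and nothing in your sketch supplies a substitute. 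For \ref{it2-sturmian} you are essentially aligned with the paper: it cites the known fact that characteristic Sturmian words have arbitrarily long palindromic prefixes, and your ``take $U_n$ to be the first half of an even-length palindromic prefix'' is exactly how one reads off $\clubsuit$ with $c=0$ from that fact.

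For \ref{it3-sturmian} your route genuinely differs from the paper's, and as written it does not close. You propose to find long palindromic factors and then invoke linear recurrence of Sturmian words to bound their first occurrence by $O(|U_n|)$; this is plausible in outline but you neither specify $V_n$ nor pin down the constant $2$, and the appeal to \cite{AdamczewskiBugeaud2007:palindromic} for that precise constant is optimistic. The paper instead argues via the \emph{factor (Rauzy) graph} $G_n(\omega)$: a Sturmian word has a unique left-special factor $L_n$ and a unique right-special factor $R_n$ of each length $n$, with $R_n=\widetilde{L_n}$, and the shape of $G_n$ (three arcs joining $L_n$ and $R_n$, on $n+1$ vertices) forces every infinite path to hit $L_n$ within $n$ steps and then, after $n-1$ further steps to clear the overlap, to hit $R_n=\widetilde{L_n}$ within another $n+1$ steps. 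Taking $U_n=L_n$ thus exhibits a prefix $W_nU_nV_n\widetilde{U_n}$ inside the first $4n+1$ letters with $|U_n|=n$, so $|W_n|/|U_n|,|V_n|/|U_n|\le 2$ drops out with no case analysis on the slope's partial quotients. This graph argument is shorter and makes the constant explicit; your recurrence idea could presumably be pushed through, but it would require quantitative control on return words that you have not supplied.
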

\begin{proof}
    Point \ref{it1-sturmian} follows from the fact that any Sturmian word begins with arbitrarily long squares, as proved in \cite[Proposition 2]{ALLOUCHE200139}.

    Point \ref{it2-sturmian} follows from the fact that characteristic Sturmian words starts with arbitrarily long palindromes (see for instance \cite{Fischler2005PalindromicPA}).

    Point \ref{it3-sturmian} requires some further work. Let $\alpha$ be a Sturmian word. Following~\cite[§1.3.4]{Lothaire_2002}, for each $n\geq 1$,  we define the \emph{factor graph} of $\alpha$ as the directed graph $G_n=G_n(\alpha)$ whose vertices are the distinct blocks of length $n$ appearing in $\alpha$ (there are $n+1$ distinct blocks in the case of Sturmian words), and such that there is a directed edge from a block $w_1 \cdots w_n$ to  $w'_1 \cdots w'_n$ if and only if $w_2\cdots w_n = w'_1\cdots w'_{n-1}$ and $w_1\cdots w_n w'_n$ is a block of $\alpha$ of length $n+1$. Notably, $\alpha$ is completely described by some path on $G_n(\alpha)$, namely the one starting on the vertex representing $a_1 \cdots a_n$, and such that the $i$-th step lands exactly the block corresponding to $a_{i+1} \cdots a_{n+i+1}$ for each $i\in \mathbb{N}$. 

By~\cite[63]{Lothaire_2002}, there exists a unique subword of length $n$, say $w_1 \cdots w_n$, called the \emph{$n$-th right special factor} of $\alpha$ and denoted by $R_n$, such that both $w_1\cdots w_na$ and $w_1\cdots w_n b$ are subwords of $\alpha$. Similarly, there exists a unique subword of length $n$, say $w'_1 \cdots w'_n$, called the \emph{$n$-th left special factor} of $\alpha$ and denoted by $L_n$, such that  both $aw_1\cdots w_n$ and $bw_1\cdots w_n$ are subwords of $\alpha$. In particular, for every $n\geq 1$, $R_n$ (resp.\ $L_n$) is the unique vertex of outdegree (resp.\ indegree) $2$ in the graph $G_n$.

Therefore, $G_n$ is as in the following picture (see also~\cite[82]{Lothaire_2002}).
\begin{center}
    \begin{tikzpicture}[>=stealth, node distance=1.5cm, thick]
    \node (Ln) at (0, 0) {$L_n$};
    \node (Rn) at (0, 2.5) {$R_n$}; 

    \draw[->] (Ln) -- (0, 0.7);
    \fill (0, 0.8) circle (1.5pt); 
    \draw[dotted,->] (0, 0.9) -- (0, 1.6);
    \fill (0, 1.7) circle (1.5pt); 
    \draw[->] (0, 1.8) -- (Rn);

    \draw[->] (Rn) .. controls (-0.8, 3) and (-0.9, 2.8) .. (-1.2, 2.6);
    \fill (-1.3, 2.5) circle (1.5pt); 
    \draw[dotted,->] (-1.35, 2.4) .. controls (-2, 1.6) and (-2, 0.6) .. (-1.2, -0.4);
    \fill (-1.1, -0.4) circle (1.5pt); 
    \draw[->] (-1.0, -0.4) .. controls (-0.7, -0.5) and (-0.4, -0.5) .. (Ln);

    \draw[->] (Rn) .. controls (0.6, 2.7) and (1.0, 2.4) .. (1.1, 2.2);
    \fill (1.1, 2.1) circle (1.5pt); 
    \draw[dotted,->] (1.1, 2.0) .. controls (1.2, 1.9) and (1.2, 1.1) .. (1.1, 1.0);
    \fill (1.1, 0.9) circle (1.5pt); 
    \draw[->] (1.1, 0.8) .. controls (1.1, 0.5) and (0.7, 0.2) .. (Ln);

\end{tikzpicture}
\end{center}

Moreover, one can check that $R_n = \widetilde{L_n}$. \\
To prove the $\clubsuit$ property, we claim that one can consider the initial block of length $4n+1$ and set $U_n = L_n$. Indeed, $L_n$ is reached at least once within the first  $n$ steps of any path on $G_n$. Next, to avoid overlapping, we still take $n-1$ steps on $G_n$. Starting from the reached vertex, $R_n$ (which is $\widetilde{L_n}$) is reached within $n+1$ steps. Thus a path of length $3n$ hits $L_n$ and $\widetilde{L_n}$ with no overlapping, which is equivalent to our claim.
\end{proof}

\subsection{Explicit estimates in Corollary~\ref{corollario-automatiche} for some families of words}\label{rem-appendix} We now show how condition \ref{itm:sublist} of Corollary~\ref{corollario-automatiche} can be made explicit, or even improved, for some  families of words  introduced in Sections~\ref{subsec:automatic} and~\ref{sec:sturm} on the alphabet $\{a,b\}\subset \mathbb{Z}[1/p]$. 
Suppose that \ref{itm:TM1} and \ref{itm:TM2} hold, i.e.\ $|a-b|_p\geq 1$ and $\min(|a|_p,|b|_p)\geq p$. Then:
\begin{itemize}
    \item If $\omega$ is a Rudin-Shapiro, \ref{itm:sublist} gives \[\left(\max\left(|a|_\infty, |b|_\infty\right)+1\right)^{152}<p\] (taking $C=8$ in Corollary~\ref{corollario-automatiche}, as computed in the proof of Proposition~\ref{paper-folding}).
    \item If $\omega$ is a paperfolding word, 
    \ref{itm:sublist} gives \[\left(\max\left(|a|_\infty, |b|_\infty\right)+1\right)^{80}<p\]
    (taking $C=4$ in Corollary~\ref{corollario-automatiche} as computed in the proof of Proposition~\ref{prop:RSconst} and Proposition~\ref{paper-folding}).
    \item If $\omega$ is Sturmian, \ref{itm:sublist} can be improved as  \[\left(\max\left(|a|_\infty, |b|_\infty\right)+1\right)^{3}<p\]
     This is in fact equivalent to~\eqref{eqn:AssFiniteAlph1} from Corollary~\ref{cor:fin-alph} with $c=0$ (the latter constant being deduced from  Proposition~\ref{prop:sturmianConst}.~\ref{it2-sturmian}).
    \item If $\omega$ is Thue-Morse or characteristic Sturmian, then one can also improve~\ref{itm:sublist} as \[\left(\max\left(|a|_\infty, |b|_\infty\right)+1\right)<p.\] This is in fact equivalent to \eqref{eqn:AssFiniteAlph2} from Corollary~\ref{cor:fin-alph} with $c=0$ (the latter constant is deduced, respectively, from Proposition~\ref{prop:TM_is_spadesuit}.~\ref{it2-Thue-Morse}  and Proposition~\ref{prop:sturmianConst}~\ref{it2-sturmian}).
\end{itemize}

\subsection{Some negative examples}
   As for words that do \emph{not} satisfy condition $\spadesuit$ or $\clubsuit$, an obvious example is given by those having all distinct letters from an infinite alphabet. However, words containing only two letters might also fail to satisfy these conditions. An example is the word $\alpha$ obtained adjoining the words
    \[w_i=\underbrace{0\cdots 0}_{\text{$i$ times}}\underbrace{1\cdots 1}_{\text{$i$ times}}.\]
Given any finite prefix $\beta$ of $\alpha$, let $j$ be the largest index such that $w_j$ is contained in $\beta$. Then, the longest repeated subword of $\beta$ is $w_{j-1}$. Equivalently, the largest possible choice for $U_j$ in $\beta$ would have length $2(j-1)$. However, even in this case
\begin{equation*}
    |W_j|=\sum_{k=1}^{j-2} 2k=(j-2)(j-1),
\end{equation*}
so that the ratio $|W_n|/|U_n|$ tends to infinity. A similar argument shows that $\alpha$ also fails to satisfy condition $\clubsuit$.

The periodic word $\overline{101001000}$ is trivially $\spadesuit$ but not $\clubsuit$, as the longest palindrome it contains is $0100010$.

Finally, we construct a word that is $\clubsuit$ but not $\spadesuit$. Defining the finite words
\[\beta_i = 0 \underbrace{1 \cdots 1}_{i \text{ times}},\]
we claim that
\[\alpha= \beta_1 \widetilde{\beta_1}\beta_2 \beta_3\widetilde{\beta_3}\widetilde{\beta_2}\cdots  \beta_{2^n} \cdots \beta_{2^{n+1}-1} \widetilde{\beta_{2^{n+1}-1}}\cdots \widetilde{ \beta_{2^n}} \cdots\]
is the word we are looking for. Indeed, it is $\clubsuit$ by setting $U_n=\beta_{2^{n}} \cdots \beta_{2^{n+1}-1}$ and $V_n=\emptyset$, so that \[|U_n|=\sum_{i=2^n+1}^{2^{n+1}} i=\frac{(2^n +1+ 2^{n+1})(2^{n+1}-2^n)}{2}\geq 2^{2n}\] and
\[|W_n| =(2+2^n)(2^n-1)\leq 2^{2n+1}, \]
proving that $\alpha$ is $\clubsuit$ with constant $2$. To conclude, suppose by contradiction that $\alpha$ were $\spadesuit$, and let $(U_n)_n,(V_n)_n$, and $(W_n)_n$ be as in Definition~\ref{def-spade-club-with-constant}. From the structure of $\alpha$ one can easily deduce that $U_n$ contains at most two zeroes. Let $k_n$ be the maximum number of consecutive $1$-s in $U_n$, so that $|U_n|\leq 3k_n+2$. Then $|W_n|\geq (2+2^{k_n-2})(2^{k_n-2}-1)\geq 2^{2(k_n-2)}$. Since $(k_n)_n$ must be unbounded, also $(|W_n|/|U_n|)_n$ is, contradicting part~\ref{it2} of Definition~\ref{def-spade-club-with-constant}.

\subsection{Some more remarks on condition \texorpdfstring{$\spadesuit$}{♠}}
Words satisfying $\spadesuit$ condition are sometimes called \emph{stammering words} (see \cite[Paragraph after Theorem 1.3]{Bugeaud2010AutomaticCF}). Indeed, they are an analogue (and, to some extent, a generalisation) of the stammering words defined in \cite{Adam-Bug-Annals-2007}, which are words beginning with arbitrarily large repetitions. Following \cite{Adam-Bug-Annals-2007}, a word $\omega$ is called \emph{stammering} if there exists a real number $w>1$ and two sequences $(U_n)_n, (W_n)_n$ of finite words and  such that conditions \ref{it2} and \ref{it3} hold and condition \ref{it1} is replaced by requiring that, for every $n\geq 1$, the word $W_n U_n^{w}$ is a prefix of $\omega$ (here $U^{w}$ denotes the words obtained repeating $\lfloor w\rfloor$ times the word $U$, followed by the prefix of $U$ of length $\lceil(w-\lfloor w\rfloor)|U|\rceil$.
In particular, $\spadesuit$ words having $|V_n|=0$ for all $n\geq 1$ are stammering words with $w=2$. On the other hand, $\spadesuit$ words are a generalisation of the stammering ones for which $w$ is an even integer. Indeed, if $\omega$ is stammering with $w=2 w'$ for some $w'\in \mathbb{N}$, then there are sequences $W_n$ and $U_n'$ of finite words such that $W_n U_n'^{2w}$ is a prefix of $\omega$ for every $n$, hence $\omega$ satisfies condition $\spadesuit$ with $V_n$ the empty word and $U_n=U_n'^w$.

\subsection{Some more remarks on condition \texorpdfstring{$\clubsuit$}{♣}}
As we have seen, there are only few examples and results on words satisfying condition $\clubsuit$.

However, in  \cite{AdamczewskiBugeaud2007:palindromic} the authors proposed a general construction for the class of words satisfying condition $\clubsuit$ with constant $c=0$ i.e.\ sequences beginning with arbitrarily large palindromes.

Given an arbitrary sequence $(R_n)_{n\geq 0}$ of nonempty finite words whose letters are positive integers, they define a sequence of finite words $(T_n)_{n\geq 0}$ by setting $T_0 = R_0$ and 
\[
T_{n+1} = T_n R_{n+1} \widetilde{T_n R_{n+1}},
\]
for $n \geq 0$. Thus, $T_{n+1}$ begins with $T_n$, and the sequence of finite words $(T_n)_{n\geq 0}$ converges to an infinite word $\omega$ such that, if $U_n=T_n R_{n+1}$, then $U_n \widetilde{U_n}$ is a prefix of $\omega$ for every $n\geq 0$ and $|U_n|$ tends to infinity with $n$. 
They also remark that such a construction actually gives every sequence beginning with arbitrarily large palindromes.

We end this section with the following Figure~\ref{fig:words}, which illustrates the relations between the families of words we have considered so far.

\begin{figure}[h]
    \centering
    \begin{tikzpicture}[scale=0.75] 
    \draw[line width=0.25mm, rotate=-45] (2,0) ellipse (4.5cm and 3cm);
    \node at (4.5,-4.7) [font=\scriptsize, fill=white] {\begin{tabular}{c}
             On a finite\\
          alphabet
        \end{tabular}};

    \draw[line width=0.25mm, rotate=45] (-2,0) ellipse (4.5cm and 3cm);
    \node[fill=white, font=\scriptsize] at (-4.5,-4.7) {$\spadesuit$};

    \draw[line width=0.25mm] (0,1.7) ellipse (4.5cm and 1.7cm);
    \node[fill=white, font=\scriptsize] at (0,3.57) {$\clubsuit$};

    \draw (0,-1) circle (1.3) node [below=0.2cm, font=\scriptsize] {Automatic};

    \draw (0,-0.55) ellipse (0.75cm and 0.75cm) node[font=\scriptsize] {Periodic};

    \draw (0,1.35) circle (0.8) node [font=\scriptsize] {Sturmian};

    \draw[] (0,-0.7) ellipse (1.6cm and 3cm);
    \node at (0,-2.8) [font=\scriptsize] {\begin{tabular}{c}
             Low-\\
          complexity
        \end{tabular}};

    \end{tikzpicture}
    \caption{Families of special words}
    \label{fig:words}
\end{figure}
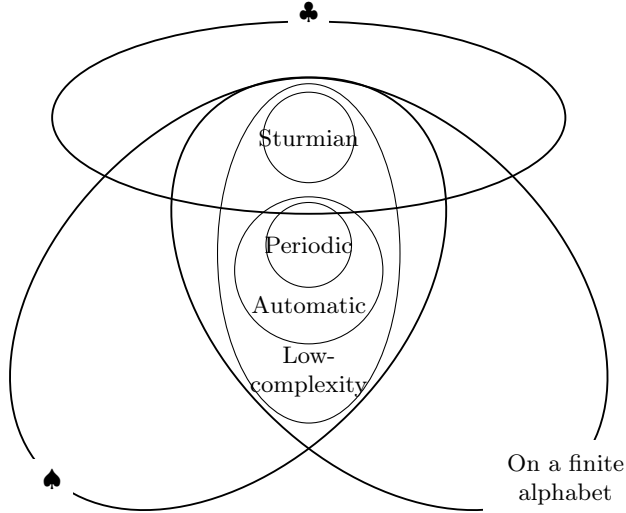

\printbibliography

@article {Bilu,
    AUTHOR = {Bilu, Y.F.},
     TITLE = {The many faces of the Subspace Theorem [after Adamczewski, Bugeaud, Corvaja, Zannier. . .]},
   JOURNAL = {Astérisque},
    VOLUME = { Exp.
No. 967, vii. Séminaire Bourbaki. Vol. 2006/2007},
      YEAR = {2008},
PAGES={1--38}}

@article{Browkin1978,
    author  = "Browkin, J.",
    title   = "Continued fractions in local fields I",
    year    = {1978},
    journal = "Demonstratio Mathematica",
    volume  = "11",
    pages   = "67--82."
}

@book {BombieriGubler2006,
    AUTHOR = {Bombieri, Enrico and Gubler, Walter},
     TITLE = {Heights in {D}iophantine geometry},
    SERIES = {New Mathematical Monographs},
    VOLUME = {4},
 PUBLISHER = {Cambridge University Press, Cambridge},
      YEAR = {2006},
}

@article{Browkin2000,
    author  = "Browkin, J.",
    title   = "Continued fractions in local fields II",
    year    = {2000},
    journal = "Mathematics of Computation",
    volume  = "70",
    pages   = "1281--1292"
}

@article{Laohakosol1985,
    author  = "Laohakosol, V.",
    title   = "A characterization of rational numbers by $p$-adic Ruban continued fractions",
    year    = "1985",
    journal = "J. Austral. Math. Soc. Ser. A",
    volume  = "39",
    number  = "",
    pages   = "300--305"
}

@article{Mahler1940,
    author  = "Mahler, K.",
    title   = "On a geometrical representation of p-adic numbers",
    year    = "1940",
    journal = "Ann. of Math.",
    volume  = "41",
    number  = "",
    pages   = "8--56"
}

@article {Ruban1970,
    AUTHOR = {Ruban, A. A.},
     TITLE = {Certain metric properties of the {$p$}-adic numbers},
   JOURNAL = {Sibirsk. Mat. \v{Z}.},
  FJOURNAL = {Akademija Nauk SSSR. Sibirskoe Otdelenie. Sibirski\u{\i}
              Matemati\v{c}eski\u{\i} \v{Z}urnal},
    VOLUME = {11},
      YEAR = {1970},
     PAGES = {222--227}}

@incollection {Schneider1970,
    AUTHOR = {Schneider, Th.},
     TITLE = {\"{U}ber {$p$}-adische {K}ettenbr\"{u}che},
 BOOKTITLE = {Symposia {M}athematica, {V}ol. {IV} ({INDAM}, {R}ome,
              1968/69)},
     PAGES = {181--189},
 PUBLISHER = {Academic Press, London},
      YEAR = {1970}}

@book{rockett1992continued,
    author    = {Rockett, Andrew M. and Sz{\"u}sz, Peter},
    title     = {Continued Fractions},
    publisher = {World Scientific Publishing Co., Inc.},
    address   = {River Edge, NJ},
    year      = {1992},
    url       = {https://doi.org/10.1142/1725},
}

@article{Bugeaud2010AutomaticCF,
     author = {Bugeaud, Yann},
     title = {Automatic continued fractions are transcendental or \linebreak quadratic},
     journal = {Annales scientifiques de l'\'Ecole Normale Sup\'erieure},
     pages = {1005--1022},
     publisher = {Soci\'et\'e math\'ematique de France},
     volume = {Ser. 4, 46},
     number = {6},
     year = {2013},
     doi = {10.24033/asens.2208},
     nolanguage = {en},
     url = {https://irma.math.unistra.fr/~bugeaud/travaux/AutomFC.pdf}
}

@book{alloucheShallit2003:automaticSequences, place={Cambridge}, title={Automatic Sequences: Theory, Applications, Generalizations}, DOI={10.1017/CBO9780511546563}, publisher={Cambridge University Press}, author={Allouche, Jean-Paul and Shallit, Jeffrey}, year={2003}}

@article{ooto2017:padicTrasc,
  author    = {Ooto, Tomohiro},
  title     = {Transcendental p-adic continued fractions},
  journal   = {Mathematische Zeitschrift},
  year      = {2017},
  volume    = {287},
  number     = {3},
  pages     = {1053--1064},
}

@article{cobham68:tagMachinesComplexity,
 author  = {Cobham, Alan},
  title   = {Uniform tag sequences},
  journal = {Mathematical Systems Theory},
  volume  = {6},
  number  = {1},
  pages   = {164--192},
  year    = {1972},
  month   = mar,
  doi     = {10.1007/BF01706087},
  url     = {https://doi.org/10.1007/BF01706087},
  issn    = {1433-0490},
  abstract = {Structural properties, local and asymptotic, of members of a class of simple, realtime generable sequences are analyzed.}
}

@article{Allouche-paper-folding,
  title={Schr{\"o}dinger operators with Rudin-Shapiro potentials are not palindromic},
  author={Jean-Paul Allouche},
  journal={Journal of Mathematical Physics},
  year={1997},
  volume={38},
  pages={1843-1848},
  url={https://api.semanticscholar.org/CorpusID:123586071}
}

@article{Allouche-paper-folding-2,
  title={The number of factors in a paperfolding sequence},
  author={Jean-Paul Allouche},
  journal={Bulletin of the Australian Mathematical Society},
  year={1992},
  volume={46},
  pages={23-32},
  }

@article{allouche-shallit-Rudin-shapiro,
     author = {Allouche, J.-P. and Shallit, J. O.},
     title = {Complexit\'e des suites de {Rudin-Shapiro} g\'en\'eralis\'ees},
     journal = {Journal de th\'eorie des nombres de Bordeaux},
     pages = {283--302},
     publisher = {Universit\'e Bordeaux I},
     volume = {5},
     number = {2},
     year = {1993},
     mrnumber = {1265906},
     zbl = {0817.11014},
     nolanguage = {fr},
     url = {http://www.numdam.org/item/JTNB_1993__5_2_283_0/}
}

@article{Fischler2005PalindromicPA,
  title={Palindromic prefixes and episturmian words},
  author={St{\'e}phane Fischler},
  journal={J. Comb. Theory A},
  year={2005},
  volume={113},
  pages={1281-1304},
  url={https://api.semanticscholar.org/CorpusID:2816260}
}

@article{Schlick1,
  author    = {H. P. Schlickewei},
  title     = {Die p-adische Verallgemeinerung des Satzes von Thue-Siegel-Roth-Schmidt},
  journal   = {J. {Reine Angew. Math.}},
  volume    = {228},
  year      = {1976},
  pages     = {86--105},
}

@article{Schlick2,
  author    = {H. P. Schlickewei},
  title     = {On products of special linear forms with algebraic coefficients},
  journal   = {Acta Arith.},
  volume    = {31},
  year      = {1976},
  pages     = {389--398},
}

@book {Berthé_Rigo_2010,
     TITLE = {Combinatorics, automata and number theory},
    SERIES = {Encyclopedia of Mathematics and its Applications},
    VOLUME = {135},
    EDITOR = {Berth\'e, Val\'erie and Rigo, Michel},
 PUBLISHER = {Cambridge University Press, Cambridge},
      YEAR = {2010},
}

@book{angell2021irrationality,
  title     = {Irrationality and Transcendence in Number Theory},
  author    = {Angell, David},
  year      = {2021},
  edition   = {1st},
  publisher = {Chapman and Hall/CRC},
  doi       = {10.1201/9781003111207},
  url       = {https://doi.org/10.1201/9781003111207}
}

@article{morseHedlund:symbolicDyn1,
 ISSN = {00029327, 10806377},
 URL = {http://www.jstor.org/stable/2371264},
 author = {Marston Morse and Gustav A. Hedlund},
 journal = {American Journal of Mathematics},
 number = {4},
 pages = {815--866},
 publisher = {Johns Hopkins University Press},
 title = {Symbolic Dynamics},
 urldate = {2024-10-04},
 volume = {60},
 year = {1938}
}

@article{shallit1992real,
  title = {Real Numbers with Bounded Partial Quotients: A Survey},
  author = {Shallit, Jeffrey},
  journal = {L'Enseignement Mathématique},
  volume = {38},
  number = {1-2},
  pages = {151--187},
  year = {1992},
  doi = {10.5169/seals-59489},
  publisher = {Fondation L'Enseignement Mathématique},
  issn = {0013-8584},
  url = {https://www.e-periodica.ch}
}

@book{maillet1906introduction,
  title={Introduction {\`a} la th{\'e}orie des nombres transcendants: et des propri{\'e}t{\'e}s arithm{\'e}tiques des fonctions},
  author={Maillet, E.T.},
  year={1906},
  publisher={Gauthier-Villars},
}

@article{baker1962:cfTransc,
author = {Baker, A.},
title = {Continued fractions of transcendental numbers},
journal = {Mathematika},
volume = {9},
number = {1},
pages = {1-8},
doi = {https://doi.org/10.1112/S002557930000303X},
url = {https://londmathsoc.onlinelibrary.wiley.com/doi/abs/10.1112/S002557930000303X},
noeprint = {https://londmathsoc.onlinelibrary.wiley.com/doi/pdf/10.1112/S002557930000303X},
year = {1962}
}

@article{queffelec1998:thueMorse,
title = {Transcendance des fractions continues de Thue–Morse},
journal = {Journal of Number Theory},
volume = {73},
number = {2},
pages = {201-211},
year = {1998},
issn = {0022-314X},
doi = {https://doi.org/10.1006/jnth.1998.2305},
url = {https://www.sciencedirect.com/science/article/pii/S0022314X98923058},
author = {M. Queffélec},
abstract = {Résumé
On se propose de démontrer que le nombre réel dont la suite des quotients partiels est la suite de Thue–Morse sur l'alphabet {a,b}, oùaetbsont des entiers distincts ⩾1, est un nombre transcendant.}
}

@incollection{queffelec2000irrational,
  author    = {Queff\'{e}lec, Martine},
  title     = {Irrational numbers with automaton-generated continued fraction expansion},
  booktitle = {Dynamical Systems: From Crystal to Chaos},
  editor    = {Gambaudo, Jean-Marc and Hubert, Pierre and Tisseur, Philippe and Vaienti, Sandro},
  pages     = {190--198},
  publisher = {World Scientific},
  year      = {2000},
  doi = {10.1142/9789812793829_0019},
}

@article{ALLOUCHE200139,
title = {Transcendence of Sturmian or Morphic Continued Fractions},
author = {Jean-Paul Allouche and J.L. Davison and M. Queffélec and L.Q. Zamboni},
journal = {Journal of Number Theory},
volume = {91},
number = {1},
pages = {39-66},
year = {2001},
issn = {0022-314X},
doi = {https://doi.org/10.1006/jnth.2001.2669},
url = {https://www.sciencedirect.com/science/article/pii/S0022314X01926691}
}

@article{adamBug2007:thueMorse,
author = {Boris Adamczewski and Yann Bugeaud},
title = {A Short Proof of the Transcendence of Thue-Morse Continued Fractions},
journal = {The American Mathematical Monthly},
volume = {114},
number = {6},
pages = {536--540},
year = {2007},
publisher = {Taylor \& Francis},
}

@article{adamBug2005:complIIContFrac,
author = {Boris Adamczewski and Yann Bugeaud},
title = {{On the complexity of algebraic numbers, II. Continued fractions}},
volume = {195},
journal = {Acta Mathematica},
number = {1},
publisher = {Institut Mittag-Leffler},
pages = {1 -- 20},
year = {2005},
doi = {10.1007/BF02588048},
URL = {https://doi.org/10.1007/BF02588048}
}

@article{Adam-Bug-Annals-2007,
 Author = {Adamczewski, Boris and Bugeaud, Yann},
 Title = {On the complexity of algebraic numbers. {I}: {Expansions} in integer bases},
 FJournal = {Annals of Mathematics. Second Series},
 Journal = {Ann. Math. (2)},
 ISSN = {0003-486X},
 Volume = {165},
 Number = {2},
 Pages = {547--565},
 Year = {2007},
 noLanguage = {English},
 DOI = {10.4007/annals.2007.165.547},
 Keywords = {11J81,11J87,11K16,11A63,11B85},
 zbMATH = {5180742},
 Zbl = {1195.11094}
}

@article{AdamczewskiBugeaudDavison06,
abstract = {The main purpose of this work is to present new families of transcendental continued fractions with bounded partial quotients. Our results are derived thanks to combinatorial transcendence criteria recently obtained by the first two authors in [3].},
affiliation = {Université Claude Bernard Lyon 1 Institut Camille Jordan, CNRS Bât. Braconnier, 21 avenue Claude Bernard 69622 VILLEURBANNE Cedex (FRANCE); Université Louis Pasteur UFR de mathématiques 67084 STRASBOURG Cedex (FRANCE); Laurentian University Department of Mathematics and Computer Science Sudbury, Ontario P3E 2C6 (CANADA)},
author = {Adamczewski, Boris and Bugeaud, Yann and Davison, Les},
journal = {Annales de l’institut Fourier},
keywords = {Continued fractions; transcendental numbers; subspace theorem; transcendence; continued fraction},
nolanguage = {eng},
number = {7},
pages = {2093-2113},
publisher = {Association des Annales de l’institut Fourier},
title = {Continued fractions and transcendental numbers},
url = {http://eudml.org/doc/10198},
volume = {56},
year = {2006},
}

@article{AdamczewskiBugeaud2007:mailletBaker,
url = {https://doi.org/10.1515/CRELLE.2007.036},
title = {On the Maillet-Baker continued fractions},
author = {Boris Adamczewski and Yann Bugeaud},
pages = {105--121},
volume = {2007},
number = {606},
journal = {Journal {R}eine {A}ngew. {M}ath.},
doi = {doi:10.1515/CRELLE.2007.036},
year = {2007},
}

@article{AdamczewskiBugeaud2007:palindromic,
abstract = {In the present work, we investigate real numbers whose sequence of partial quotients enjoys some combinatorial properties involving the notion of palindrome. We provide three new transendence criteria, that apply to a broad class of continued fraction expansions, including expansions with unbounded partial quotients. Their proofs heavily depend on the Schmidt Subspace Theorem.},
affiliation = {CNRS and Université Claude Bernard Lyon 1 Institut Camille Jordan Bât. Braconnier, 21 avenue Claude Bernard 69622 Villeurbanne Cedex (FRANCE); Université Louis Pasteur U. F. R. de mathématiques 7, rue René Descartes 67084 Strasbourg Cedex (FRANCE)},
author = {Adamczewski, Boris and Bugeaud, Yann},
journal = {Annales de l’institut Fourier},
keywords = {Continued fractions; palindromes; transcendental numbers; Subspace Theorem; continued fractions; subspace theorem},
nolanguage = {eng},
number = {5},
pages = {1557-1574},
publisher = {Association des Annales de l’institut Fourier},
title = {Palindromic continued fractions},
url = {http://eudml.org/doc/10270},
volume = {57},
year = {2007},
}

@article{romeo2024continued,
  author       = {Giuliano Romeo},
  title        = {Continued Fractions in the Field of \(p\)-adic Numbers},
  journal      = {Bulletin of the American Mathematical Society},
  volume       = {61},
  year         = {2024},
  pages        = {343--371},
  doi          = {10.1090/bull/1819}
}

@article{longhi2024heights,
  author       = {Ignazio Longhi and Nadir Murru and Francesco M. Saettone},
  title        = {Heights and Transcendence of \(p\)-adic Continued Fractions},
  journal      = {Annali di Matematica Pura ed Applicata (1923 -)},
  year         = {2025},
  volume = {204},
  pages = {129-145}, 
}

@book{Lothaire_2002,
place={Cambridge},
series={Encyclopedia of Mathematics and its Applications},
title={Algebraic Combinatorics on Words},
publisher={Cambridge University Press},
author={Lothaire, M.},
number={90},
year={2002},
collection={Encyclopedia of Mathematics and its Applications}
}

@article{lucasEtAl:cobham,
author = {Mol, Lucas and Rampersad, Narad and Shallit, Jeffrey and Stipulanti, Manon},
title = {Cobham’s Theorem and Automaticity},
journal = {International Journal of Foundations of Computer Science},
volume = {30},
number = {08},
pages = {1363-1379},
year = {2019},
doi = {10.1142/S0129054119500308},

URL = { 
    
        https://doi.org/10.1142/S0129054119500308
    
    

},
eprint = { 
    
        https://doi.org/10.1142/S0129054119500308
    
    

}
,
    abstract = { We make certain bounds in Krebs’ proof of Cobham’s theorem explicit and obtain corresponding upper bounds on the length of a common prefix of an aperiodic a-automatic sequence and an aperiodic b-automatic sequence, where a and b are multiplicatively independent. We also show that an automatic sequence cannot have arbitrarily large factors in common with a Sturmian sequence. }
}

\end{document}